\newcommand{\fs}{\mathfrak{s}}
\newcommand{\ft}{\mathfrak{t}}
\newcommand{\fa}{\mathfrak{a}}
\newcommand{\BB}{\mathcal{A}^{(0)}}
\newcommand{\BC}{\mathbb{C}}
\newcommand{\BQ}{\mathbb{Q}}
\newcommand{\BZ}{\mathbb{Z}}
\newcommand{\BF}{\mathbb{F}}
\renewcommand{\AA}{\mathcal{A}}
\newcommand{\CC}{{\mathcal{C}}}
\newcommand{\DD}{\mathcal{D}}
\newcommand{\EE}{\mathcal{E}}
\newcommand{\Zn}[1]{\BZ/{#1}\BZ}
\newcommand{\s}{\sigma}
\newcommand{\hs}{{\hat{\sigma}}}
\newcommand{\hm}{{\hat{\mu}}}
\newcommand{\hld}{\hat{\lambda}}
\newcommand{\e}{\varepsilon}
\newcommand{\smatp}[1]{s^{(#1)}}
\newcommand{\tH}{{\tilde{H}_{\CC}}}
\newcommand{\ev}{\operatorname{ev}}
\newcommand{\coev}{\operatorname{coev}}
\newcommand{\GL}{\operatorname{GL}}
\newcommand{\SL}{\operatorname{SL}}
\newcommand{\id}{\operatorname{id}}
\newcommand{\End}{\operatorname{End}}
\newcommand{\gal}{\operatorname{Gal}}
\newcommand{\irr}{\operatorname{Irr}}
\newcommand{\FPdim}{\operatorname{FPdim}}
\newcommand{\tr}{\operatorname{tr}}
\newcommand{\ord}{\operatorname{ord}}
\newcommand{\SO}{\operatorname{SO}}
\newcommand{\stab}{\operatorname{Stab}}
\renewcommand{\bot}{\boxtimes}
\newcommand{\Rep}{\operatorname{Rep}}
\newcommand{\lcm}{\operatorname{lcm}}
\newcommand{\PGL}{\operatorname{PGL}}
\renewcommand{\1}{{\mathds{1}}}
\renewcommand{\Vec}{\operatorname{Vec}}
\newcommand{\pt}{{\operatorname{pt}}}
\newcommand{\ad}{{\operatorname{ad}}}
\renewcommand{\sl}{{\mathfrak{sl}}}
\newcommand{\spec}{\operatorname{Spec}}
\newtheorem{thm}{Theorem}[section]
\newtheorem{lem}[thm]{Lemma}
\newtheorem{prop}[thm]{Proposition}
\newtheorem{cor}[thm]{Corollary}
\newtheorem{rmk}[thm]{Remark}
\newtheorem{conj}[thm]{Conjecture}
\theoremstyle{definition}
\newtheorem{defn}[thm]{Definition}
\newtheorem{example}[thm]{Example}
\def\GQ{\gal(\bar\BQ/\BQ)}
\def\ON{{\Omega^N_2}}
\def\O2{{\Omega^n_2}}
\def\res{\operatorname{res}}
\def\Sym{\operatorname{Sym}}
\def\Orb{\operatorname{Orb}}
\def\hta{{\hat{\tau}}}
\def\a{{\alpha}}
\def\o{{\otimes}}
\def\ol{\overline}
\def\B{\mathcal{B}}
\def\htau{\hat{\tau}}
\def\SLZ{\SL_2(\BZ)}
\def\w{\omega}
\newcommand{\jacobi}[2]{{\genfrac(){}{0}{#1}{#2}}}
\newcommand{\UZ}[1]{{(\BZ/#1\BZ)^\times}}
\renewcommand{\spec}[1]{{\operatorname{spec}(#1)}}
\newtheorem*{CPM}{Theorem I}
\newtheorem*{CPMII}{Theorem II}
\def\svec{{\operatorname{sVec}}}
\def\shat{\hat{S}}
\newcommand{\btsvec}{\!
\mathbin{\mathop{\bot}\limits_{\scalebox{0.6}{$\operatorname{sVec}$}}}\!
}
\newcommand{\ul}[1]{\underline{#1}}
\title{Modular categories with transitive Galois actions}
\author{Siu-Hung Ng}
\address{Department of Mathematics\\
    Louisiana State University\\
    Baton Rouge, LA 70803\\
    U.S.A}
\thanks{The first author was partially supported by the NSF grant DMS-1664418.}
\email{rng@math.lsu.edu}
\author{Yilong Wang}
\address{Department of Mathematics\\
    Louisiana State University\\
    Baton Rouge, LA 70803\\
    U.S.A}
\email{yilongwang@lsu.edu}
\author{Qing Zhang}
\address{Department of Mathematics\\Purdue University\\
West Lafayette\\
IN 47907\\U.S.A.}
\email{zhan4169@purdue.edu}
\date{}
\begin{document}

\maketitle
\begin{abstract}
In this paper, we study modular categories whose Galois group actions on their simple objects are transitive. We show that such modular categories admit unique factorization into prime transitive factors. The representations of $\SLZ$ associated with transitive modular categories are proven to be minimal and irreducible. Together with the Verlinde formula, we characterize prime transitive modular categories as the Galois conjugates of the adjoint subcategory of the quantum group modular category $\mathcal{C}(\mathfrak{sl}_2,p-2)$ for some prime $p > 3$. As a consequence, we completely classify transitive modular categories. Transitivity of super-modular categories can be similarly defined. A unique factorization of any transitive super-modular category into s-simple transitive factors is obtained, and the split transitive super-modular categories are completely classified.
\end{abstract}

\section{Introduction}\label{sec:intro}
Modular categories are spherical braided fusion categories over $\BC$ whose braidings are nondegenerate. The notion of modular categories has evolved from the studies of rational conformal field theory \cite{MS}, topological quantum field theory \cite{TuBook} and the quantum invariants of knots and 3-manifolds such as the Jones polynomial \cite{Jones87, RT}. Moreover, unitary modular categories are the mathematical foundations of topological phases of matter \cite{Wen92} and topological quantum computing \cite{ZhenghanBook, RW18}.  Similar to the role of groups in the study of symmetries, modular categories are natural algebraic objects to  organize ``quantum symmetries''. 

An important family of examples of modular categories is obtained from the quantum group construction \cite{BakalovKirillov, Row05}.  In general, for any simple Lie algebra $\mathfrak{g}$ and a \emph{suitable} root of unity $q \in \BC$, one can construct a modular category by taking the semisimplification of the category of tilting modules of the quantum group $U_{q}(\mathfrak{g})$ specialized at the root of unity $q$ \cite{And92, AP}. The associated 3-manifold invariants \cite{BHMV1, TurWen} and mapping class group representations \cite{BHMV2, FKW} are also well-studied in the literature. 

Modular categories have many striking arithmetic properties, such as the Verlinde formula, which are encoded in the matrices $S$ and $T$ (see Section \ref{sec:pre}). More precisely, let $ \mathfrak{s}:= {\scriptsize \begin{pmatrix} 0 & -1\\1 &0\end{pmatrix}}$ and  $ \mathfrak{t}:= {\scriptsize \begin{pmatrix} 1 & 1\\0 &1\end{pmatrix}}$ be the generators of the modular group $\SL_2(\BZ)$. For any modular category $\mathcal{C}$, the assignment $ \ol \rho_{\CC}: \mathfrak{s}\mapsto S $, $\mathfrak{t}\mapsto T$ defines a projective representation of $\SL_2(\mathbb{Z})$ \cite{BakalovKirillov, TuBook}. Another notable arithmetic property of $\CC$ is the fact that the kernel of $\bar\rho_{\CC}$ is a congruence subgroup whose level is equal to the order of the T-matrix \cite{NS10}. Moreover, $\bar\rho_{\CC}$ admits liftings to linear representations of $\SL_2(\BZ)$ which are also shown to have congruence kernels in \cite{DongLinNg}. In addition, these liftings enjoy certain symmetries under the action of the absolute Galois group $\GQ$. These properties of the liftings are essential to our proofs in this paper.

Since the irreducible characters of the fusion ring of a modular category $\CC$ can be indexed by the set $\irr(\CC)$ of isomorphism classes of simple objects of $\CC$ \cite{dBG, CosteGannon}, the action of $\GQ$ on  these characters induces a permutation  action on $\irr(\CC)$. The number of Galois orbits are also invariants of modular categories.

It is always important to classify mathematical structures of a certain property in any mathematical theory. There has been literature on classifying modular categories by rank \cite{BNRWClassificationByRank,SRW, BGNPRW}, Frobenius-Perron dimension \cite{bruillard2014classification,BPR19} and Frobenius-Schur exponent \cite{BR12,FSexp2}. Note that there are finitely many modular categories up to equivalence for any given rank \cite{RankFinite}. 
The number of Galois orbits plays prominent roles in most of these papers (see also \cite{Creamer, Green}), which leads to the idea of classifying modular categories by the number of Galois orbits.

In this paper, we investigate modular categories with only one Galois orbit, which are called \emph{transitive modular categories}. The smallest nontrivial example of a transitive modular category is the Fibonacci modular category, which can be described by the adjoint subcategory  $\CC(\sl_2, 3)^{(0)}$ of the quantum group category $\CC(\sl_2, 3)$ 
associated to $\sl_2$ at level 3. More generally, the adjoint subcategory $\CC(\sl_2, p-2)^{(0)}$ of $\CC(\sl_2, p-2)$ and its Galois conjugates are prime and transitive modular categories for any prime $p > 3$ (see Proposition \ref{p:B-trans-prime}). Remarkably, up to equivalence, these are all the nontrivial prime transitive modular categories. Moreover, every transitive modular category can be uniquely factorized (up to permutations of factors) into a Deligne product of prime transitive ones. Specifically, we prove the following two major theorems of this paper (cf.~Theorem \ref{thm:prime-transitive} and Theorem \ref{thm:main}).

\begin{CPM}
Let $\CC$ be a nontrivial modular category. Then $\CC$ is prime and transitive if and only if 
$\ord(T)$ is a prime number $p > 3$
and $\CC$ is equivalent to a Galois conjugate of $\CC(\sl_2, p-2)^{(0)}$ as modular categories. 
\end{CPM}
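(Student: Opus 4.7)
The plan is to take the ``if'' direction as supplied by Proposition~\ref{p:B-trans-prime}, which says that $\CC(\sl_2, p-2)^{(0)}$ and its Galois conjugates are prime transitive modular categories of T-order $p$ for every prime $p > 3$. The substance of the theorem is the converse: from a nontrivial prime transitive modular category $\CC$ one must recover both the prime $p > 3$ and the equivalence class. I would build the argument on two structural facts established earlier in the paper: the lifted $\SLZ$-representation $\rho_\CC$ is irreducible and \emph{minimal}, meaning it does not factor through $\SL_2(\BZ/M\BZ)$ for any proper divisor $M$ of $N := \ord(T)$; and every transitive modular category admits a unique Deligne factorization into prime transitive factors.

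The first main step is to show that $N$ is prime. Write $N = p_1^{a_1}\cdots p_k^{a_k}$; the Chinese remainder theorem gives $\SL_2(\BZ/N\BZ) \cong \prod_i \SL_2(\BZ/p_i^{a_i}\BZ)$, so irreducibility forces an external tensor decomposition $\rho_\CC \cong \bigotimes_i \rho_i$, and the Galois symmetry via the cyclotomic character $\GQ \twoheadrightarrow \UZ{N} \cong \prod_i \UZ{p_i^{a_i}}$ respects this decomposition. Combined with the unique factorization theorem for transitive modular categories, this splitting of the modular data produces a nontrivial Deligne factorization of $\CC$ whenever $k \geq 2$, contradicting primeness. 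Thus $N = p^a$ is a prime power, and then a direct analysis of minimal irreducible representations of $\SL_2(\BZ/p^a\BZ)$---whose kernel onto $\SL_2(\BF_p)$ is a $p$-group---forces $a = 1$. The small cases $p = 2, 3$ are ruled out by inspection of the irreducible representations of $\SL_2(\BF_2) \cong \fS_3$ and $\SL_2(\BF_3)$, neither of which supports the modular data of a nontrivial transitive modular category, so $N = p > 3$ is prime.

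With $N = p > 3$ prime, $\rho_\CC$ is an irreducible representation of $\SL_2(\BF_p)$. The classical character theory of this group gives a short list of possible dimensions $\{1, (p-1)/2, (p+1)/2, p-1, p, p+1\}$; the diagonal form of $T$ with $p$-th roots of unity, the transitivity of $\UZ{p}$ on $\irr(\CC)$, and minimality together select $r = |\irr(\CC)| = (p-1)/2$ and identify $\rho_\CC$ with one of the two discrete-series representations of that dimension. These are precisely the representations attached to the Galois conjugates of $\CC(\sl_2, p-2)^{(0)}$, so after a suitable Galois twist the $(S,T)$ data of $\CC$ matches that of $\CC(\sl_2, p-2)^{(0)}$. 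The Verlinde formula $N_{ij}^k = \sum_l S_{il}S_{jl}\overline{S_{kl}}/S_{0l}$ then identifies the fusion rings, and since $\CC(\sl_2, p-2)^{(0)}$ is determined up to equivalence by its modular data, one concludes that $\CC$ is equivalent to the corresponding Galois conjugate.

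The most delicate step is the primality reduction. Translating a CRT tensor decomposition of $\rho_\CC$ into an actual Deligne factorization of $\CC$ demands careful compatibility of $(S,T)$ data with the Deligne product, and this is where the unique factorization theorem for transitive modular categories does the decisive work. Excluding prime-power T-orders $N = p^a$ with $a > 1$ via minimality appears to require explicit analysis of how $\rho_\CC$ interacts with the $p$-Sylow of $\SL_2(\BZ/p^a\BZ)$, which I expect to be the most technically involved part of the argument. By comparison, the character-theoretic analysis in paragraph~3, once primality of $N$ is in hand, is essentially classical.
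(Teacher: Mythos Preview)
Your proposal has the right overall shape but contains a misreading and a genuine gap.

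First, you misstate the paper's notion of \emph{minimal}: a level-$n$ representation $(\phi,W)$ is minimal when $\dim W = \varphi_2(n)$ and $\spec{\phi(\ft)}$ is a single orbit $\{\s^2(\zeta_n^l):\s\in\Gn{n}\}$, not merely when it fails to factor through a smaller level. With the correct definition, Lemma~\ref{l:unique_minimal} already forces $n = d\cdot p_1\cdots p_\ell$ with $d\mid 12$ and distinct primes $p_i>3$; combined with $N\mid n\mid 12N$, this gives at once that $N=\ord(T)$ is odd, square-free, with all prime factors $>3$ (Theorem~\ref{thm:tran_irr}). Your separate treatments of prime powers $p^a$ with $a>1$ and of $p\in\{2,3\}$ are therefore unnecessary.

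The substantive gap is in your primality reduction. You assert that the CRT decomposition $\rho\cong\bigotimes_i\rho_i$, ``combined with the unique factorization theorem for transitive modular categories,'' yields a nontrivial Deligne factorization of $\CC$. But Theorem~\ref{thm:prime-decomp} runs the other way: it takes a transitive modular category and asserts existence and uniqueness of its prime factorization; for a prime $\CC$ that factorization is $\CC$ itself, so the theorem supplies nothing. What is actually required is to manufacture, from the representation-level tensor factor alone, a nontrivial proper fusion subcategory of $\CC$. The paper does this concretely in Theorem~\ref{thm:ordT-prime}: it realizes each $\rho_i$ as a representation associated with some $\B_i$ (via Lemma~\ref{l:realizing_min} and Corollary~\ref{c:realizing_min}), matches the distinct $\ft$-eigenvalues to define a bijection $\Phi:\irr(\CC)\to E_1\times E_2$, and then carries out a Verlinde-formula computation showing that $D=\Phi^{-1}(E_1\times\{b_2\})$ is closed under tensor product. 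By Corollary~\ref{cor:fus-subcat} this $\DD$ is a modular subcategory, and since $1<|D|=\varphi_2(p)<|\irr(\CC)|$ it contradicts primeness. That Verlinde computation is the crux you are missing; it is not a consequence of any abstract factorization statement, and your proposal gives no mechanism to replace it.
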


\begin{CPMII}
Let $\CC$ be a nontrivial modular category. Then $\CC$ is transitive if and only if $\CC$ is equivalent to a Deligne product of prime transitive modular categories whose T-matrices have distinct orders. In particular, $\ord(T)$ is a square-free odd integer whose prime factors are greater than 3. 
\end{CPMII}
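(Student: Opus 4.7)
The plan is to reduce Theorem II to Theorem I by invoking Müger's unique prime factorization of a modular category and then tracking the $\GQ$-action through Deligne products. The basic observation is that for $\CC = \AA \boxtimes \B$, a simple object of $\CC$ has the form $a \boxtimes b$, the Galois action is diagonal, $\sigma(a \boxtimes b) = \sigma(a) \boxtimes \sigma(b)$, and $T_{\CC} = T_{\AA} \otimes T_{\B}$, so $\ord(T_\CC) = \lcm(\ord(T_\AA), \ord(T_\B))$.

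For the ``only if'' direction, suppose $\CC$ is transitive and write $\CC \simeq \CC_1 \boxtimes \cdots \boxtimes \CC_k$ by Müger's theorem. Projecting the transitive diagonal action on $\prod_i \irr(\CC_i)$ onto each coordinate shows each $\CC_i$ is transitive, so by Theorem I each $\CC_i$ is a Galois conjugate of $\CC(\sl_2, p_i - 2)^{(0)}$ for some prime $p_i > 3$, with $\ord(T_i) = p_i$ and $|\irr(\CC_i)| = (p_i - 1)/2$. To force distinctness of the $p_i$, I would assume $p_i = p_j = p$ for some $i \neq j$ and derive a contradiction. Both actions factor through $\gal(\BQ_p/\BQ) \cong (\BZ/p)^\times$, a group of order $p-1$, and each point stabilizer has order $2$; consequently every orbit of the diagonal action on $\irr(\CC_i) \times \irr(\CC_j)$ has size at most $p-1$, whereas the full product has size $((p-1)/2)^2$. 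For $p \geq 7$ this already fails, and for $p = 5$ (the Fibonacci case) a direct inspection shows the diagonal action has exactly two orbits. Either outcome contradicts transitivity of $\CC$, so the $p_i$ are pairwise distinct.

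For the ``if'' direction, take $\CC = \CC_1 \boxtimes \cdots \boxtimes \CC_k$ with each $\CC_i$ prime transitive and with $\ord(T_i) = p_i$ pairwise distinct primes. Then $N := \ord(T_\CC) = \prod_i p_i$, and by the Chinese remainder theorem $\gal(\BQ_N/\BQ) \cong \prod_i \gal(\BQ_{p_i}/\BQ)$. Since the $\GQ$-action on $\irr(\CC)$ factors through $\gal(\BQ_N/\BQ)$ and the action on $\irr(\CC_i)$ factors through $\gal(\BQ_{p_i}/\BQ)$, this CRT decomposition identifies the diagonal $\GQ$-action on $\prod_i \irr(\CC_i)$ with the full product of the individual actions. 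Each factor is transitive by hypothesis, so the product action is transitive and $\CC$ is transitive. The final sentence of the theorem is then immediate: $\ord(T_\CC) = \prod_i p_i$ is a product of distinct odd primes each greater than $3$.

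The main obstacle will be the sufficiency direction: making the CRT step precise so that the diagonal image of $\GQ$ inside $\prod_i \Sym(\irr(\CC_i))$ is exactly the product of the individual images, and not merely a subgroup. This hinges on the fact that the kernel of $\GQ \twoheadrightarrow \gal(\BQ_{p_i}/\BQ)$ corresponds under CRT to the complementary factor $\prod_{j \neq i} \gal(\BQ_{p_j}/\BQ)$, so when the $p_i$ are pairwise coprime the joint Galois image is constrained by no nontrivial relation and is forced to surject onto the product. Establishing this surjection cleanly is the technical heart of the argument, and essentially says that the Galois action on $\irr(\CC)$ sees each prime factor ``independently'' precisely when the T-orders share no common prime.
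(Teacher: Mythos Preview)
Your proposal is correct and follows essentially the same architecture as the paper: apply M\"uger's prime factorization, project transitivity to each factor, invoke Theorem~I on the prime factors, argue distinctness of the T-orders, and handle the converse by a Chinese remainder argument. The paper's presentation differs only in packaging: it proves once and for all the formula $|\Orb(\AA\boxtimes\B)| = [\BQ(\dim\AA)\cap\BQ(\dim\B):\BQ]$ (Proposition~\ref{prop:trans-prod} and Corollary~\ref{cor:coprime}), which handles both the distinctness argument and the converse uniformly, whereas you redo these by hand via orbit bounds and CRT. Your distinctness bound ``orbit size $\le p-1$'' forces an unnecessary case split at $p=5$; since $\BQ_p$ has a unique index-$2$ subfield, the two kernels of $\gal(\BQ_p/\BQ)\to G_{\CC_i}$ coincide, so every diagonal orbit has size exactly $(p-1)/2$ and there are $(p-1)/2\ge 2$ orbits for all $p\ge 5$---no special case needed.
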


To prove these theorems, we first study factorizations of transitive modular categories in Section \ref{sec:sec3}. 
For any transitive modular category $\CC$, we denote by $\BQ(S)$ the $\BQ$-extension by adjoining all the entries of  the S-matrix,  and denote by $G_\CC$ the corresponding Galois group over $\BQ$. Our first observation is that the action of $G_\CC$ on $\irr(\mathcal{C})$ is fixed-point free (Proposition \ref{p:sizeofGalois}), and so $\irr(\CC)$ is a $G_\CC$-torsor. Moreover, every fusion subcategory of a transitive modular category is also transitive and modular (Corollary \ref{cor:fus-subcat}). We conclude that any transitive modular category has a unique factorization into a Deligne product of prime transitive modular categories in Theorem \ref{thm:prime-decomp}. In Section \ref{sec:example}, we study the Galois conjugates of modular categories $\CC(\sl_2, k)^{(0)}$ 
at odd level $k$. We show that for any prime $p\ge 5$, every Galois conjugate of $\CC(\sl_2, p-2)^{(0)}$ is prime and transitive. 

Inspired by the Galois symmetries of the representations of $\SLZ$ associated with modular categories, we define the notion of \emph{minimal representations} of $\SL_2(\BZ)$  and the \emph{characteristic 2-group} of a modular category in Section \ref{subsec:mod-group-rep}. The minimal representations of $\SLZ$ associated with a modular category $\CC$ are completely determined by the eigenvalues of the images of $\ft$ (Lemma \ref{l:unique_minimal}). Moreover, the characteristic 2-group of $\CC$ naturally gives rise to a decomposition of any representation of $\SL_2(\BZ)$ associated with $\CC$ (see Proposition \ref{p:HC-decomp}).
By studying these two notions, we prove in Theorem \ref{thm:tran_irr} that any  representation of $\SL_2(\BZ)$ associated with a transitive modular category $\CC$ is minimal and irreducible, and  that the  order of the T-matrix of $\CC$ is odd and square-free. 

We completely classify transitive modular categories in Section \ref{sec:class} by characterizing the prime and transitive modular categories. Using the minimality and the irreducibility of the representations of $\SLZ$ associated with transitive modular categories, we show that the order of the T-matrix of any prime transitive modular category $\CC$ is a prime $p \ge  5$, and it has the same fusion rules as $\CC(\sl_2, p-2)^{(0)}$. Applying the classification result of \cite{FK}, we show that $\CC$ must be a Galois conjugate of $\CC(\sl_2, p-2)^{(0)}$ 
(see 
Theorem \ref{thm:prime-transitive}). Combining with the unique factorization theorem,  the full classification of transitive modular categories is established in Theorem \ref{thm:main}. 

Finally,  we discuss transitive super-modular categories in Section \ref{sec:super}. We classify all the transitive \emph{split} super-modular categories by using  the classification of transitive modular categories (Theorem \ref{thm:split}). Moreover, a unique factorization of transitive super-modular categories into s-simple transitive factors is obtained in Theorem \ref{thm:sup-decomp}. Then we exhibit a family of non-split transitive prime categories over $\svec$ and conjecture that these are all the s-simple transitive super-modular categories up to Galois conjugate.

The paper is organized as follows. In Section \ref{sec:pre}, we
set up notations and
give a brief review on modular categories. 
In Section \ref{sec:sec3}, we define transitive modular categories and derive some fundamental properties of them. In particular, we establish the prime factorization theorem in Theorem \ref{thm:prime-decomp}. In Section \ref{sec:example}, we discuss the prime and transitive modular categories obtained from the quantum group categories $\CC(\sl_2, p-2)$ for any odd prime $p$. In Section \ref{subsec:mod-group-rep}, we study the modular group representations associated with modular categories. We show in Theorem \ref{thm:tran_irr} that the representations associated with transitive modular categories are irreducible and minimal. In Section \ref{sec:class}, we characterize the prime transitive modular categories in Theorem \ref{thm:prime-transitive}, which implies the complete classification of transitive modular categories in Theorem \ref{thm:main}. Finally, in Section \ref{sec:super}, transitive super-modular categories are introduced and studied. We classify split transitive super-modular categories in Theorem \ref{thm:split} and prove a unique factorization theorem of transitive super-modular categories in Theorem \ref{thm:sup-decomp}.

Throughout this paper, we tacitly use the following notations: $\zeta_{n} = \exp(2\pi i/n)$, $\BQ_n = \BQ(\zeta_n)$, and $i = \zeta_4 = \sqrt{-1}$. A subcategory of any category is assumed to a full subcategory, unless stated otherwise.

\section{Preliminaries}\label{sec:pre}
In this section, we recall some basic definitions and notations. The readers are referred to \cite{BakalovKirillov, EGNO, Kassel} for more details.
\subsection{Braided fusion categories}
A \emph{fusion category} is a semisimple, $\BC$-linear abelian, rigid monoidal category with finite-dimensional Hom-spaces and finitely many isomorphism classes of simple objects including the tensor unit $\1$. For any fusion category $\CC$, we denote by $\irr(\CC)$ the set of isomorphism classes of simple objects of $\CC$. When it is clear from the context, we will  denote  the isomorphism class of an object $X$ of $\CC$ by the same notation $X$. 

The Grothendieck group of $\CC$, denoted by $K_0(\CC)$, admits a ring structure given by the tensor product. More precisely, we have $X\otimes Y = \sum_{Z\in\irr(\CC)} N_{X,Y}^{Z} Z$ for any $X, Y\in\irr(\CC)$, where
\begin{equation}\label{eq:fusion-rules}
N_{X,Y}^{Z} := \dim_{\BC}\CC(X\otimes Y, Z)
\end{equation}
are called the \emph{fusion coefficients}. 
The collection of fusion coefficients $N_{X,Y}^Z$ for all $X, Y, Z \in \irr(\CC)$ is referred to as the \emph{fusion rules} of $\CC$.  The \emph{fusion matrix} $N_X$ of $X \in \irr(\CC)$ is defined as $(N_X)_{Z,Y}:=N_{X,Y}^Z$ for any $Y,Z \in \irr(\CC)$. The largest real eigenvalue of  $N_X$, denoted by $\FPdim(X)$, is called the \emph{Frobenius-Perron dimension of $X$}. The \emph{Frobenius-Perron dimension} of $\CC$ is defined as 
$$
\FPdim(\CC) :=\sum_{X \in \irr(\CC)} \FPdim(X)^2\,.
$$  

Let $\CC$ be a fusion category. For any object $X \in \CC$, the left dual of $X$ is a triple $(X^*, \ev_X, \coev_X)$, where $X^*$ is an object of $\CC$, 
$\ev_{X}: X^* \otimes X \to \1$ and $\coev_{X}: \1 \to X \otimes X^{*}$ are respectively the evaluation and coevaluation morphisms associated with the left dual object $X^*$ of $X$. A simple object $X \in \irr(\CC)$ is called \emph{invertible} if $X \otimes X^* \cong \1$. The \emph{pointed} subcategory of $\CC$, denoted by $\CC_\pt$, is the full abelian subcategory generated by the invertible objects of $\CC$. A fusion category $\CC$ is called pointed if $\CC_{\pt} = \CC$. The \emph{adjoint subcategory} of $\CC$, denoted by $\CC_\ad$ or $\CC^{(0)}$, is the full abelian subcategory generated by  the subobjects of $X \o X^*$ for any $X \in \CC$ (cf.~\cite{ENO,GN}). Both $\CC_\pt$ and $\CC_\ad$ are fusion subcategories of $\CC$.

The left duality of $\CC$ can be extended to a contravariant monoidal functor $(-)^*$, and so $(-)^{**}$ defines a monoidal functor on $\CC$. A \emph{pivotal structure} on a fusion category $\CC$ is an isomorphism of monoidal functors $j: \id_\CC \xrightarrow{\cong} (-)^{**}$. A fusion category equipped with a pivotal structure is called a \emph{pivotal fusion category}. If $\CC$ is a pivotal fusion category, then for any $X \in \CC$ and $f \in \End_{\CC}(X)$, the (left) \emph{quantum trace} of $f$ can be defined as
$$
\tr_j(f) := \ev_{X^{*}} \circ ((j_X\circ f) \otimes \id_{X^{*}}) \circ \coev_X \in \End_{\CC}(\1) \cong \BC\,.
$$
A pivotal structure $j$ on $\CC$ is called \emph{spherical} if $\tr_j(f) = \tr_j(f^*)$ for any endomorphism $f$ of $\CC$. A \emph{spherical fusion category} is a fusion category equipped with a   spherical pivotal structure.  When the pivotal structure is clear from the context, we will drop the  subscript $j$. In a pivotal category $\CC$, the \emph{quantum dimension} $d_X$ of any object $X \in \CC$ is defined to be $d_X := \tr(\id_X)$. It has been shown in \cite{ENO} that if $\CC$ is a spherical fusion category, then $d_X$ is a totally real algebraic integer for any object $X \in \CC$. 

The \emph{global dimension} of any fusion category was introduced in \cite[Def.~2.5]{MuI}. If $\CC$ is a spherical fusion category, its global dimension is given by
$$
\dim(\CC) = \sum_{X \in \irr(\CC)}d_X^2\,.
$$
In particular, $\dim(\CC)$ is a totally positive algebraic integer. We will denote the positive square root of $\dim(\CC)$ by $\sqrt{\dim(\CC)}$. 

A \emph{braiding} on a fusion category $\CC$ is a natural isomorphism 
$$\beta_{X, Y}: X \otimes Y \xrightarrow{\cong} Y \otimes X$$ 
satisfying the Hexagon axioms. A fusion category equipped with a braiding is called a \emph{braided fusion category}. 
Let $\CC$ be a braided fusion category, and $\DD \subset \CC$ a collection of objects of $\CC$. The \emph{M\"uger centralizer} of $\DD$ in $\CC$ (cf.~\cite{MugerSF2}), denoted by $C_{\CC}(\DD)$, is the full subcategory of $\CC$ with the collection of objects given by
$$
\{X \in \CC \mid \beta_{Y, X} \circ \beta_{X, Y} = \id_{X \otimes Y},\ \forall\, Y \in \DD\}\,.
$$
It follows directly from the definition of a braiding that $C_{\CC}(\DD)$ is a fusion subcategory of $\CC$. In particular, the fusion subcategory $C_{\CC}(\CC)$ is called the \emph{M\"uger center} of $\CC$, and is denoted by $\CC'$. A braided fusion category $\CC$ (or its braiding $\beta$) is called \emph{nondegenerate} if $\CC'$ is equivalent to $\Vec$, the category  of finite-dimensional vector spaces over $\BC$. A braided fusion category is called a \emph{symmetric fusion category} if $\CC' = \CC$. By Deligne's theorems \cite{Del1, Del2}, if $\CC$ is a symmetric fusion category, then $\dim(\CC) \in \BZ$. 

\subsection{Modular categories and arithmetic invariants}\label{subsec:pre}

A \emph{premodular} category (or a ribbon fusion category) is a spherical braided fusion category. A \emph{modular category} $\CC$ is a premodular category whose underlying braiding $\beta$ is nondegenerate. The (unnormalized) S-matrix of a premodular category $\CC$ is defined to be
$$
S_{X,Y} := \tr(\beta_{Y, X^*}\circ \beta_{X^*, Y}),
\  X, Y \in \irr(\CC)\,.
$$
In particular, $S_{X, \1} = S_{\1, X} = d_X$. It has been proved in \cite{MugerSF2} that a premodular category is modular if and only if its S-matrix is invertible. Moreover, when $\CC$ is modular, the fusion coefficients can be expressed in terms of  the S-matrix by the Verlinde formula (see, for example, \cite{BakalovKirillov}):
\begin{equation}
N_{X, Y}^{Z} =\frac{1}{\dim(\CC)}\sum_{W \in \irr(\CC)} \frac{S_{X, W}S_{Y, W}S_{Z^{*}, W}}{S_{\1, W}}\,.    
\end{equation}

Let $\CC$ be a modular category. A natural isomorphism $\theta: \id_{\CC} \xrightarrow{\cong} \id_{\CC}$, called the \emph{ribbon structure} of $\CC$, can be defined using the spherical pivotal structure of $\CC$ and the Drinfeld isomorphism (cf.~\cite[Sec.~2]{NS07}). The ribbon structure is compatible with the braiding and the duality in the following sense: 
\begin{equation}\label{eq:tw-def}
\theta_{X\otimes Y} = (\theta_{X} \otimes \theta_Y)\circ \beta_{Y, X} \circ \beta_{X, Y} \quad \text{and}\quad \theta_{X^*} = (\theta_X)^*
\end{equation}
for any objects $X, Y \in \CC$. If $X \in \irr(\CC)$, then $\theta_X$ is a nonzero scalar multiple of $\id_X$. We will use the abuse notation to denote both this scalar and the isomorphism itself by $\theta_X$ whenever $X$ is simple. The  T-matrix of $\CC$ is defined to be the diagonal matrix
$$
T_{X,Y} := \delta_{X, Y} \theta_{X},\ X, Y \in \irr(\CC)\,.
$$
It follows from  \cite{Vafa} (see also \cite[Thm.~3.1.19]{BakalovKirillov}) that $ \theta_X$  has finite order for any $X \in \irr(\CC)$, and so does the T-matrix. The pair of matrices ($S$, $T$) is called the \emph{(unnormalized) modular data} of $\CC$. We may denote the modular data of a modular category $\CC$ by ($S_\CC$, $T_\CC$) when the context needs to be clarified.

For any $m \in \BZ$, the $m$-th \emph{Gauss sum} \cite{NSW} of a modular category $\CC$ is defined as
$$
\tau_m(\CC) := \sum_{X \in \irr(\CC)} d_X^2\theta_X^m\,.
$$
If $\gcd(m, \ord(T))=1$, the $m$-th (multiplicative) \emph{central charge} and the $m$-th \emph{anomaly} of $\CC$ are defined as
\begin{equation}\label{eq:anomaly}
\xi_m(\CC) := \frac{\tau_m(\CC)}{|\tau_m(\CC)|}
\quad\text{and}\quad
\alpha_m(\CC) := \xi_m(\CC)^2\,.
\end{equation}
It is well-known that  $|\tau_1(\CC)| = \sqrt{\dim(\CC)}$, and $\xi_m(\CC)$ is a root of unity (cf.~\cite{BakalovKirillov, MugerSF2,  NSW}).

\subsection{Galois actions on modular categories}  \label{s:galois}
Let $\CC$ be a modular category with modular data ($S$, $T$). For any complex matrix $M$, we denote by $\BQ(M)$ the field extension of $\BQ$ by adjoining the entries of $M$. It has been proved in \cite{NS10} that  $\BQ(S) \subset \BQ(T)=\BQ_N$, where $N =\ord(T)$. In particular, $\BQ(S)$ is an abelian extension over $\BQ$, and its Galois group is denoted by $G_\CC$. It is immediate to see that 
$$
\BQ(S) = \BQ(S_{X,Y}/d_Y\mid X,Y \in \irr(\CC)).
$$

By the Verlinde formula, for any $Y \in \irr(\CC)$, the assignment $$\chi_Y: \irr(\CC) \to \BC,\  X \mapsto \frac{S_{X, Y}}{d_Y}$$ 
defines a character of the fusion ring $K_0(\CC)$, and $\{\chi_Y\mid Y \in \irr(\CC)\}$ is the set of irreducible characters of $K_0(\CC)$ (cf.~\cite{BakalovKirillov}).  Thus, for any $\s \in G_\CC$, $\s(\chi_Y) = \chi_{\hs(Y)}$  for some permutation $\hs$ on $\irr(\CC)$, and the map 
$$G_\CC \to \Sym(\irr(\CC)), \ \s \mapsto \hs$$ 
is a group monomorphism. The set of orbits under this $G_\CC$-action is abbreviated as $\Orb(\CC)$. We will denote a Galois automorphism $\s \in G_\CC$ as well as its associated permutation on $\irr(\CC)$ by $\hs$.

For any Galois extension $E$ over $\BQ(S)$, the Galois group $\gal(E/\BQ)$ acts on $\irr(\CC)$ via the restriction on $\BQ(S)$ or the surjection $\gal(E/\BQ) \xrightarrow{\res} G_\CC$. Therefore, the $\gal(E/\BQ)$-orbits in $\irr(\CC)$ are identical to the $G_\CC$-orbits. 
Using the above convention, for any $\s \in \gal(E/\BQ)$, we use $\hs_{\CC}$ to represent the restriction of $\s$ on $\BQ(S)$ and also its permutation on $\irr(\CC)$. When it is clear from the context, $\hs_{\CC}$ will simply be denoted by $\hs$. In particular, one can take $E=\bar \BQ$ and so the absolute Galois group $\GQ$ acts on $\irr(\CC)$. According to \cite{dBG}, for any $\s \in \GQ$, $X, Y \in \irr(\CC)$, we have
\begin{equation}\label{eq:sgn-1}
\s\left(\frac{S_{X, Y}}{\sqrt{\dim(\CC)}}\right) = \pm \frac{S_{\hs(X), Y}}{\sqrt{\dim(\CC)}}\,.
\end{equation}

If $\CC = \AA\bot \mathcal{B}$ for some modular categories $\AA$ and $\mathcal{B}$, then $\irr(\CC) = \irr(\AA) \times \irr(\mathcal{B})$ under the identification $X \bot Y \mapsto (X, Y)$. In this case, $S_{\CC} = S_{\AA} \otimes S_{\mathcal{B}}$,  the Kronecker product of the S-matrices. Therefore, $\BQ(S_\CC)=\BQ(S_\AA)\BQ(S_\B)$, the composite field of $\BQ(S_\AA)$ and $\BQ(S_\B)$. Let $\BF = \BQ(S_\AA) \cap \BQ(S_\B)$, and $L=\gal(\BF/\BQ)$.  The restrictions on $\BF$ define two epimorphisms $\res_\AA: G_\AA \to L$ and $\res_\B: G_\B \to L$. Their fiber product $G_\AA \bullet G_\B$, defined as
$$
G_\AA \bullet G_\B :=\{(\hs, \htau) \in G_\AA \times G_\B\mid \res_\AA(\hs)= \res_\B(\htau)\},
$$
satisfies the commutative diagram
$$
\xymatrix{
G_\AA \bullet G_\B \ar[r]^-{p_\AA} \ar[d]_{p_\B} & G_\AA \ar[d]^-{\res_\AA} \\
G_\B \ar[r]^-{\res_\B} & L
}
$$
where $p_\AA,  p_\B$ are coordinate projections.
By the universal property of the fiber product, the restriction epimorphisms $\pi_\AA :G_\CC \to G_\AA$ and $\pi_\B: G_\CC \to G_\B$ induce a group homomorphism 
\begin{equation}\label{eq:gal-iso}
f: G_\CC \to G_\AA \bullet G_\B, \quad 
f(\hs_\CC) = (\hs_\AA, \hs_\B)
\end{equation}
for any $\hs_\CC \in G_\CC$. It follows from \cite[Prop.~14.4.21]{DumF} that $f$ is an isomorphism. This proves the first part of statement (i) of the following lemma. The second part follows directly from \cite[Cor.~14.4.20]{DumF}.
\begin{lem}\label{lem:gal-on-bot}
Let $\CC = \AA\bot \B$ for some modular categories $\AA, \B$, and let  
$$\BF = \BQ(S_\AA) \cap \BQ(S_\B)\,.$$ Then:
\begin{itemize}
\item[(i)] The map $f: G_{\CC} \to G_\AA \bullet G_\B$, $f(\hs_\CC) = (\hs_\AA, \hs_\B)$, defines an isomorphism of groups, and 
\begin{equation}\label{eq:gal_product}
    |G_\CC| = \frac{|G_\AA|\cdot |G_\B|}{[\BF:\BQ]}\,.
\end{equation}
\item[(ii)] For any $\s \in \GQ$, $X \in \irr(\AA)$ and $Y \in \irr(\mathcal{B})$, we have 
$$\hs_{\CC}(X\bot Y) = \hs_{\AA}(X)\bot\hs_{\B}(Y)\,.$$
\item[(iii)] For any $O_\AA \in \Orb(\AA)$ and $O_\B \in \Orb(\B)$, $G_\CC$ acts on $O_\AA \times O_\B$, under the identification of $\irr(\CC) = \irr(\AA) \times \irr(\B)$, and the number of $G_\CC$-orbits in $O_{\AA} \times O_{\B}$ is bounded by $[\BF:\BQ]$. In particular, the numbers of Galois orbits of these categories satisfy $$|\Orb(\AA)|\cdot |\Orb(\B)| \le |\Orb(\CC)| \le |\Orb(\AA)|\cdot|\Orb(\B)|\cdot [\BF:\BQ] \,.$$
\end{itemize}
\end{lem}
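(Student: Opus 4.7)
The plan is to dispatch parts (i) and (ii) quickly from the Galois theory of compositum fields invoked in the paragraph preceding the lemma, and then derive part (iii) by an elementary orbit-counting argument combined with (i) and (ii).

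For (i), the bijectivity of $f$ is already given by \cite[Prop.~14.4.21]{DumF} applied to the compositum $\BQ(S_\CC)=\BQ(S_\AA)\BQ(S_\B)$ with intersection $\BF$. The order formula then follows directly from the structure of the fiber product: the projection $p_\AA\colon G_\AA\bullet G_\B\to G_\AA$ is surjective with kernel isomorphic to $\ker(\res_\B)$ of order $|G_\B|/|L|$, so $|G_\CC|=|G_\AA||G_\B|/[\BF:\BQ]$.

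For (ii), the key observation is that $S_\CC=S_\AA\otimes S_\B$ forces the Verlinde characters to factor as $\chi_{X\bot Y}=\chi_X\cdot\chi_Y$ on $K_0(\CC)=K_0(\AA)\otimes K_0(\B)$. Since $\hs_\AA,\hs_\B,\hs_\CC$ are by definition the permutations determined by $\s(\chi_Z)=\chi_{\hs(Z)}$, applying $\s\in\GQ$ to the factored character gives
$$
\chi_{\hs_\CC(X\bot Y)}=\s(\chi_X)\cdot\s(\chi_Y)=\chi_{\hs_\AA(X)}\cdot\chi_{\hs_\B(Y)}=\chi_{\hs_\AA(X)\bot\hs_\B(Y)},
$$
from which the formula follows.

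For (iii), part (ii) shows each product $O_\AA\times O_\B$ is $G_\CC$-stable, and part (i) identifies $G_\CC$ with a subgroup of $G_\AA\times G_\B$ of index $|L|=[\BF:\BQ]$. The full group $G_\AA\times G_\B$ acts transitively on $O_\AA\times O_\B$, so I would invoke the elementary fact that a transitive $G$-set decomposes into at most $[G:H]$ $H$-orbits for any finite-index subgroup $H\leq G$ (choose coset representatives $g_1,\dots,g_n$ and a base point $x_0$; then $X=\bigcup_i Hg_ix_0$). This bounds the number of $G_\CC$-orbits inside each product by $[\BF:\BQ]$, and summing over the $|\Orb(\AA)|\cdot|\Orb(\B)|$ pairs gives the upper bound on $|\Orb(\CC)|$. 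The lower bound is immediate because the sets $O_\AA\times O_\B$ are disjoint, nonempty, $G_\CC$-invariant, and partition $\irr(\CC)$, so each contributes at least one orbit. There is no serious obstacle here — the lemma rests on standard compositum Galois theory plus a basic counting principle, and the only small computation is the character factorization in (ii), which is immediate from the Kronecker structure of $S_\CC$.
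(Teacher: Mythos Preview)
Your proposal is correct. Parts (i) and (ii) match the paper's argument essentially verbatim: the paper also invokes \cite[Prop.~14.4.21]{DumF} for the isomorphism and obtains the order formula from the compositum structure, and part (ii) in both treatments reduces to the observation that the Galois action on characters factors through the Kronecker product $S_\CC=S_\AA\otimes S_\B$.

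For (iii) there is a small but genuine difference in technique. The paper proves the upper bound via Burnside's lemma together with the containment
\[
\stab_{G_\CC}(X\boxtimes Y)\subset \stab_{G_\AA}(X)\times\stab_{G_\B}(Y),
\]
summing $|\stab|$ over $O_\AA\times O_\B$ and dividing by $|G_\CC|$. Your argument instead uses the elementary index bound: $G_\AA\times G_\B$ acts transitively on $O_\AA\times O_\B$, and restricting to the subgroup $G_\CC\cong G_\AA\bullet G_\B$ of index $[\BF:\BQ]$ can produce at most that many orbits. Both are valid; yours is arguably more direct since it avoids the stabilizer computation entirely, while the paper's Burnside approach makes explicit exactly where the factor $[\BF:\BQ]$ enters numerically.
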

\begin{proof} 
The equality \eqref{eq:gal_product} follows directly from \cite[Cor.~14.4.20]{DumF} and the definition of $G_\CC$.

The action of $G_\CC$ on $\irr(\CC)$ is equivalent to the action of $G_\AA \bullet G_\B$ on $\irr(\AA) \times \irr(\B)$ by the definition of the Galois group actions. The statement (ii) follows immediately from this observation. 

To prove (iii), consider any $X \in O_A$ and $Y \in O_\B$. By definition,
$$
\stab_{G_{\CC}}(X\boxtimes Y) \subset \stab_{G_{\AA}}(X) \times \stab_{G_{\B}}(Y)\,.
$$
Therefore, by Burnside's lemma (see, for example, \cite[Ex.~18.3.8]{DumF}) and \eqref{eq:gal_product}, we have 
$$
\begin{aligned}
1 &\le \text{number of } G_{\CC}\text{-orbits in } O_\AA\times O_\B\\
& = \frac{1}{|G_{\CC}|} 
\sum_{(X, Y) \in O_{\AA}\times O_{\B}} |\stab_{G_{\CC}}(X\boxtimes Y)|\\
&\le \frac{1}{|G_{\CC}|} 
\sum_{(X, Y) \in O_{\AA}\times O_{\B}} |\stab_{G_{\AA}}(X)|\cdot |\stab_{G_{\B}}(Y)|\\
&= \frac{|G_{\AA}| \cdot |G_{\B}|}{|G_{\CC}|} 
= [\BF:\BQ]\,.
\end{aligned}
$$
Now, we can establish the last inequalities by summing over all $O_{\AA}\times O_{\B} \in \Orb(\AA)\times \Orb(\B)$.
\end{proof}

\section{Unique factorization of transitive modular categories}\label{sec:sec3}
In this section, we introduce the definition of transitive modular categories. These modular categories have spectacular properties which provide the foundations for the classification. We prove in Theorem~\ref{thm:prime-decomp} that every fusion subcategory of a transitive modular category is a transitive modular subcategory and the prime factorization of a transitive modular category is unique up to permutation 
of prime factors.

\begin{defn}
A modular category $\CC$ is said to be \emph{transitive} if $G_\CC$ (or $\GQ$) acts transitively on $\irr(\CC)$, i.e., $|\Orb(\CC)| = 1$. 
\end{defn}

Recall that a transitive subgroup $G$ of the symmetric group $\mathfrak{S}_n$ is called \emph{regular} if the $G$-action on $\{1, \dots, n\}$ is fixed-point free (cf.~\cite{Wiebook}).

\begin{prop}\label{p:sizeofGalois}
If $\CC$ is a transitive modular category, then $G_\CC$ is regular and $|G_\CC|=|\irr(\CC)|$.
\end{prop}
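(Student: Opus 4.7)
The plan is to exploit two facts already in hand from the preliminaries: (a) since $\BQ(S)\subseteq\BQ_N$ is contained in a cyclotomic field, $G_\CC=\gal(\BQ(S)/\BQ)$ is a finite abelian group; and (b) the natural map $G_\CC\to\Sym(\irr(\CC))$, $\s\mapsto\hs$, was already noted to be a monomorphism, so the $G_\CC$-action on $\irr(\CC)$ is faithful. Given these, the proposition reduces to the classical observation that a faithful transitive action of a finite abelian group is automatically regular.

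To run this observation in context, I would argue as follows. Pick any $X_0\in\irr(\CC)$. For an arbitrary $Y\in\irr(\CC)$, transitivity supplies $\htau\in G_\CC$ with $\htau(X_0)=Y$. Combining this with the commutativity of $G_\CC$ gives
$$\stab_{G_\CC}(Y)=\htau\,\stab_{G_\CC}(X_0)\,\htau^{-1}=\stab_{G_\CC}(X_0).$$
Hence the point stabilizers in $G_\CC$ all coincide in a single subgroup $H\le G_\CC$. Every $\hs\in H$ therefore fixes every simple object of $\CC$, i.e., its induced permutation on $\irr(\CC)$ is trivial; faithfulness of $\s\mapsto\hs$ then forces $\hs = \id$ on $\BQ(S)$. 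Thus $H$ is trivial, which is precisely the statement that $G_\CC$ acts fixed-point freely, and the orbit-stabilizer theorem yields $|G_\CC|=|\irr(\CC)|$.

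There is essentially no obstacle here; the proof is really just packaging two ingredients already available in Section~\ref{subsec:pre} and Section~\ref{s:galois}, namely the inclusion $\BQ(S)\subseteq\BQ_N$ (abelianness) and the injectivity of $\s\mapsto\hs$ (faithfulness). The only conceptual point worth emphasising, and which one might plausibly miss, is that the abelianness of $G_\CC$ upgrades the merely transitive $G_\CC$-action on $\irr(\CC)$ to a regular one without any extra input from the modular category structure.
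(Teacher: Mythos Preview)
Your proof is correct and follows essentially the same approach as the paper: both observe that $G_\CC$ is a finite abelian group acting faithfully and transitively on $\irr(\CC)$, hence regularly. The only difference is that the paper cites \cite[Prop.~4.4]{Wiebook} for the fact that an abelian transitive permutation group is regular, whereas you spell out the elementary stabilizer argument directly.
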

\begin{proof}
Since $\CC$ is a transitive modular category, $G_\CC$ is  an  abelian transitive subgroup of $\Sym(\irr(\CC))$. By \cite[Prop.\ 4.4]{Wiebook}, $G_\CC$ is regular. In particular,  $|G_\CC|=|\irr(\CC)|$.
\end{proof}

Since $G_\CC$ is regular, for any $X \in \irr(\CC)$, there is a unique $\hs \in G_\CC$ such that $X =\hs(\1)$. Therefore, we simply identify $G_\CC$ with $\irr(\CC)$ via the identification $\hs \mapsto \hs(\1)$. For convenience, we will use $\1$ and $\hat\id$ interchangeably. In particular, the action of $\hs$ on $\hm$ is equal to the product $\hs \hm$ for any $\hs, \hm \in G_\CC$. 

Thus, for any transitive modular category $\CC$, 
its modular data can be indexed by $G_\CC$. Moreover, the S-matrix can be expressed in terms of the dimensions of simple objects as in the following lemma.

\begin{lem}\label{lem:1}
Let $\CC$ be a transitive modular category. For 
any 
$\hs, \hm \in \irr(\CC)$,  we have
\begin{equation}\label{eq:new-s}
S_{\hs, \hm} = \hs(d_{\hm})d_{\hs} = \hm(d_{\hs})d_{\hm}\,.
\end{equation}
Consequently,  all the entries of the S-matrix  are totally real algebraic units, and every simple object of $\CC$ is self-dual.
\end{lem}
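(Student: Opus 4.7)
The plan is to derive the formula \eqref{eq:new-s} first, and then extract self-duality, total reality, and the algebraic unit property in turn.

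First, I would start from the Galois compatibility $\hs(\chi_Y) = \chi_{\hs(Y)}$ recalled in Section \ref{s:galois}. Evaluating on any $X \in \irr(\CC)$ gives $\hs(S_{X,Y}/d_Y) = S_{X, \hs(Y)}/d_{\hs(Y)}$. Specializing $Y = \1$ collapses both denominators to $1$ and yields $\hs(d_X) = S_{X, \hs}/d_\hs$ for all $X \in \irr(\CC)$ and $\hs \in G_\CC$. Under the identification $\irr(\CC) \cong G_\CC$ from Proposition \ref{p:sizeofGalois} and the symmetry $S_{\hs,\hm} = S_{\hm,\hs}$, this rearranges to $S_{\hs, \hm} = d_\hs\, \hs(d_\hm)$; swapping the roles of $\hs$ and $\hm$ then yields the second equality in \eqref{eq:new-s}.

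For self-duality, I would observe that since $\BQ(S) \subset \BQ_N$ is closed under complex conjugation $c$, the element $\hat c$ belongs to $G_\CC$. The standard identity $\chi_{X^*} = \overline{\chi_X}$ translates to $\hat c(X) = X^*$ for every $X \in \irr(\CC)$. Because $G_\CC$ is abelian, $\hs$ and $\hat c$ commute, whence
\[
\hs(\1)^* = \hat c(\hs(\1)) = \hs(\hat c(\1)) = \hs(\1^*) = \hs(\1),
\]
and every simple is self-dual. Total reality of the entries of $S$ then follows at once from \eqref{eq:new-s}, since $d_X$ is totally real in a spherical fusion category, and consequently every Galois conjugate $\hta(d_\hs \cdot \hs(d_\hm)) = \hta(d_\hs)\cdot(\hta\hs)(d_\hm)$ is a product of real numbers.

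The main obstacle will be to show the entries are algebraic units. My plan is to first establish the identity
\[
d_\hs^{\,2} \;=\; \frac{\dim(\CC)}{\hs(\dim(\CC))}
\]
by applying $\hs$ to $\dim(\CC) = \sum_\hm d_\hm^2$, substituting $\hs(d_\hm) = S_{\hm, \hs}/d_\hs$, and invoking the orthogonality $\sum_\hm S_{\hm, \hs}^2 = \dim(\CC)\,\delta_{\hs, \hs^*} = \dim(\CC)$ (available now that every simple is self-dual). Then, taking the Galois norm $N = N_{\BQ(S)/\BQ}$ and using that $\hta \mapsto \hta\hs$ is a bijection of $G_\CC$,
\[
N(d_\hs)^2 \;=\; N(d_\hs^{\,2}) \;=\; \prod_{\hta \in G_\CC} \frac{\hta(\dim(\CC))}{(\hta\hs)(\dim(\CC))} \;=\; 1,
\]
so $N(d_\hs) = \pm 1$ and $d_\hs$ is an algebraic unit. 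Consequently every Galois conjugate $\hs(d_\hm)$ is a unit, and so is the product $S_{\hs, \hm} = d_\hs\, \hs(d_\hm)$.
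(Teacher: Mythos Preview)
Your argument is correct, and your derivation of \eqref{eq:new-s} coincides with the paper's. The differences lie in how self-duality and the unit property are obtained. The paper first observes that \eqref{eq:new-s} forces every $S_{\hs,\hm}$ to be real (totally real dimensions times their Galois conjugates), so the normalized $s=S/\sqrt{\dim(\CC)}$ is real, symmetric and unitary; then $s^2=C$ forces $C=\id$, giving self-duality in one line. For the unit property the paper simply invokes \cite[Prop.~3.6]{RankFinite}, which already states that $d_{\hs(\1)}$ is a unit for any $\hs\in G_\CC$. Your route instead establishes self-duality directly from the action of complex conjugation and the commutativity of $G_\CC$, and replaces the citation by a self-contained norm computation: you prove $d_\hs^{\,2}=\dim(\CC)/\hs(\dim(\CC))$ (using the orthogonality of $S$, which is legitimate once self-duality is in hand) and then observe that the Galois norm of the right-hand side telescopes to $1$. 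Your approach buys independence from the external reference and makes the identity $d_\hs^{\,2}=\dim(\CC)/\hs(\dim(\CC))$ explicit---this identity is in fact reproved later in the paper (cf.~\eqref{eq:1})---while the paper's approach is shorter and highlights the pleasant consequence $s^2=\id$.
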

\begin{proof}
Recall from Section \ref{s:galois} that 
$$
\hm\left(\frac{S_{\hs, \1}}{d_\1}\right) = \frac{S_{\hs, \hm}}{d_{\hm}}\,,
$$
so we have $S_{\hs, \hm} = \hm(d_{\hs})d_{\hm}$. Since $S$ is symmetric, we also have $S_{\hs, \hm} = \hs(d_{\hm})d_{\hs}$. 

According to \cite[Prop.~3.6]{RankFinite}, $d_\hs =d_{\hs(\1)}$ is an algebraic unit for all $\hs \in G_\CC$. Since both $ d_{\hs} $ and $ d_{\hm} $ are totally real (cf.~\cite{ENO}),  $S_{\hs, \hm}$ is a totally real unit. In particular, the matrix $ s = \frac{1}{\sqrt{\dim(\CC)}}S $ is a unitary real symmetric matrix, and so we have $\id=s^2 = C$, where $C_{X, Y} = \delta_{X, Y^{\ast}}$ is the charge conjugation matrix (cf.~\cite{BakalovKirillov, ENO}). Therefore, every simple object of $\CC$ is self-dual.
\end{proof}

\begin{cor}\label{cor:FP}
Let $\CC$ be a transitive modular category. Then there exists a unique element $\hs_0 \in G_{\CC}$ such that $\hs_0(d_{\hm}) = \FPdim(\hm)$ for all $\hm \in G_{\CC}$, and 
$$
\hs_0(\dim(\CC)) = \FPdim(\CC)\,.
$$
\end{cor}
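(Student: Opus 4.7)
The plan is to produce $\hs_0$ by recognizing the Frobenius--Perron character as a Galois conjugate of the character afforded by the quantum dimensions. First I would recall from the Verlinde formula that the set of irreducible characters of the fusion ring $K_0(\CC)$ is exactly $\{\chi_Y \mid Y \in \irr(\CC)\}$, where $\chi_Y(X) = S_{X,Y}/d_Y$; in particular $\chi_{\1}(X) = d_X$. The $\GQ$-action on these characters satisfies $\hs(\chi_Y) = \chi_{\hs(Y)}$ (see Section \ref{s:galois}).

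Since $\FPdim\colon K_0(\CC) \to \BR$ is a ring homomorphism, it is an irreducible character of $K_0(\CC)$, hence $\FPdim = \chi_{Y_0}$ for a unique $Y_0 \in \irr(\CC)$. Using the regularity of the $G_\CC$-action (Proposition \ref{p:sizeofGalois}) and the identification $\irr(\CC) = G_\CC$, there is a unique $\hs_0 \in G_\CC$ with $\hs_0(\1) = Y_0$. Then for every $\hm \in G_\CC$,
$$\hs_0(d_\hm) \;=\; \hs_0(\chi_\1(\hm)) \;=\; \chi_{\hs_0(\1)}(\hm) \;=\; \chi_{Y_0}(\hm) \;=\; \FPdim(\hm),$$
which proves existence and uniqueness of $\hs_0$.

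For the second assertion, applying $\hs_0$ termwise to the identity $\dim(\CC) = \sum_{\hm \in G_\CC} d_\hm^2$ gives
$$\hs_0(\dim(\CC)) \;=\; \sum_{\hm \in G_\CC} \hs_0(d_\hm)^2 \;=\; \sum_{\hm \in G_\CC} \FPdim(\hm)^2 \;=\; \FPdim(\CC).$$
No substantial obstacle is anticipated; the only real point to verify is that $\FPdim$ appears among the $\chi_Y$, which is the standard identification of the Frobenius--Perron character with a distinguished column of the S-matrix.
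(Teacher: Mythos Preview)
Your proof is correct and follows essentially the same approach as the paper's own argument. The only cosmetic difference is that the paper invokes Lemma~\ref{lem:1} to rewrite $\chi_{\hs_0}(\hm)=S_{\hm,\hs_0}/d_{\hs_0}$ as $\hs_0(d_\hm)$, whereas you use the equivalent Galois relation $\s(\chi_Y)=\chi_{\hs(Y)}$ from Section~\ref{s:galois} with $Y=\1$; these are the same computation.
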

\begin{proof}
Since the Frobenius-Perron dimension defines a character of the fusion ring $K_0(\CC)$ and 
the simple objects are in one-to-one correspondence to the characters of the fusion ring (see Section \ref{s:galois}), there exists a unique simple object $\hs_0 \in G_{\CC}$ such that $\chi_{\hs_{0}}(\hm) = \FPdim(\hm)$ for all $\hm\in G_{\CC}$. Therefore, by Lemma \ref{lem:1}, we have
$$
\FPdim(\hm) = \chi_{\hs_{0}}(\hm) 
= 
\frac{S_{\hm,\hs_{0}}}{d_{\hs_{0}}}
=
\hs_{0}(d_{\hm})
\,.
$$
The second assertion follows directly from the first statement and the definitions of $\dim(\CC)$ and $\FPdim(\CC)$ .
\end{proof}

Now, we can prove the first major observation on transitive modular categories.

\begin{thm}\label{thm:1}
Let $\CC$ be a transitive modular category. Then:
\begin{enumerate}
\item[{\rm (i)}] 
For any $\hs, \hm \in \irr(\CC)$, if  $\hs \neq \hm$, then $d_{\hs}^2 \neq d_{\hm}^2$. In particular, if $\hm \neq \hat\id$, then 
$$
d_{\hm}^2 \neq 1 \quad \text{and}\quad \hm(\dim(\CC)) \neq \dim(\CC).
$$
\item[\rm (ii)] 
If $X$ is an invertible object in $\CC$, then $X \cong \1$.  In particular, $\CC_\pt \simeq \Vec$ as fusion categories.
\item[\rm (iii)] 
$\BQ(S)=\BQ(\dim(\CC))= \BQ(d_X \mid X \in \irr(\CC))$.
\end{enumerate}
\end{thm}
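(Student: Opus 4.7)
The plan is to derive a single key identity,
\[
\hs(\dim(\CC)) = \frac{\dim(\CC)}{d_\hs^2}\quad\text{for all }\hs \in G_\CC,
\]
and then use it in tandem with Lemma~\ref{lem:1} to deduce all three statements.

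First I would establish this identity. Since every simple object of $\CC$ is self-dual by Lemma~\ref{lem:1}, the general modular identity $S\bar S = \dim(\CC) I$ collapses to $S^2 = \dim(\CC) I$, so $\sum_{\hm \in \irr(\CC)} S_{\hs,\hm}^2 = \dim(\CC)$ for every $\hs$. On the other hand, applying the Galois automorphism $\hs$ to $\dim(\CC) = \sum_\hm d_\hm^2$ and using $\hs(d_\hm) = S_{\hs,\hm}/d_\hs$ from Lemma~\ref{lem:1} yields
\[
\hs(\dim(\CC)) = \sum_\hm \hs(d_\hm)^2 = \frac{1}{d_\hs^2}\sum_\hm S_{\hs,\hm}^2 = \frac{\dim(\CC)}{d_\hs^2}\,.
\]

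For (i), I would prove the injectivity of the map $\hs \mapsto d_\hs^2$ via the irreducible characters of the fusion ring $K_0(\CC)$. If $d_\hs^2 = d_\hm^2$, write $d_\hs = \varepsilon d_\hm$ with $\varepsilon \in \{\pm 1\}$. Using the formula $S_{\hk,\hs} = \hk(d_\hs)\, d_\hk$ from Lemma~\ref{lem:1}, one computes
\[
\chi_\hs(\hk) = \frac{S_{\hk,\hs}}{d_\hs} = \frac{\hk(d_\hs)\,d_\hk}{d_\hs} = \frac{\hk(\varepsilon d_\hm)\,d_\hk}{\varepsilon d_\hm} = \frac{\hk(d_\hm)\,d_\hk}{d_\hm} = \chi_\hm(\hk)
\]
for every $\hk$, since $\hk$ fixes $\varepsilon \in \BQ$. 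Thus $\chi_\hs = \chi_\hm$, and the bijection between $\irr(\CC)$ and the irreducible characters of $K_0(\CC)$ forces $\hs = \hm$. The ``in particular'' clause then follows: for $\hm \neq \hat\id$ we have $d_\hm^2 \neq d_{\hat\id}^2 = 1$, and the key identity yields $\hm(\dim(\CC)) = \dim(\CC)/d_\hm^2 \neq \dim(\CC)$.

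Part (ii) is immediate from (i): an invertible simple object $X$ satisfies $X \otimes X^* \cong \1$, and since $X \cong X^*$ by Lemma~\ref{lem:1}, we obtain $d_X^2 = 1 = d_\1^2$, whence $X \cong \1$ and $\CC_\pt \simeq \Vec$. For (iii), the inclusions $\BQ(\dim(\CC)) \subset \BQ(d_X \mid X \in \irr(\CC)) \subset \BQ(S)$ are clear, so only $\BQ(S) \subset \BQ(\dim(\CC))$ remains. Take any $\ts \in G_\CC = \gal(\BQ(S)/\BQ)$ with $\ts(\dim(\CC)) = \dim(\CC)$; the key identity forces $d_\ts^2 = 1$, and (i) then gives $\ts = \hat\id$. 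Since $\BQ(S)/\BQ$ is Galois, the triviality of $\gal(\BQ(S)/\BQ(\dim(\CC)))$ yields $\BQ(S) = \BQ(\dim(\CC))$. The only delicate point is the sign ambiguity $d_\hs = \pm d_\hm$ inside part (i), but it cancels out of the character formula because field automorphisms fix $\BQ$, so no genuine obstacle arises.
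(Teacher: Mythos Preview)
Your proof is correct and follows essentially the same route as the paper. The only cosmetic differences are that the paper obtains the identity $\hm(\dim(\CC))=\dim(\CC)/d_\hm^2$ directly from \eqref{eq:sgn-1} (squared at $X=Y=\1$) rather than via $S^2=\dim(\CC)I$, and in part (i) the paper phrases the contradiction as ``the $\hs$-row and $\hm$-row of $S$ are proportional, contradicting invertibility of $S$'' rather than ``$\chi_\hs=\chi_\hm$, contradicting the character bijection''---but these are two ways of saying the same thing.
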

\begin{proof}
Suppose there exist $\hs\neq\hm \in \irr(\CC)$ such that $d_{\hs}^2 = d_{\hm}^2$. Then  $d_{\hs} = \e d_{\hm}$ for some $\e\in\{\pm 1\}$. By Lemma \ref{lem:1}, for any $\hld \in \irr(\CC)$, we have 
$$
S_{\hs, \hld} = \hld(d_{\hs})d_{\hld} = \hld(\e d_{\hm})d_{\hld} = \e\hld(d_{\hm})d_{\hld} = \e S_{\hm, \hld}\,.
$$
Consequently, the rows $S_{\hs, *}$ and $S_{\hm, *}$ of $S$ are linearly dependent, which contradicts the invertibility of the S-matrix. This proves the first assertion of statement (i).

Note that $d_{\hat\id} = d_{\1} = 1$. Therefore, for any $\hm \neq \hat{\id}$, we have $d_{\hm}^2 \neq 1$. In particular, up to isomorphism, there is no other invertible object in $\CC$ than $\1$, which implies statement (ii). Moreover, by \eqref{eq:sgn-1}, we find
\begin{equation}\label{eq:1}
\hm\left(\frac{d_{\1}^2}{\dim(\CC)}\right) 
=
\hm\left(\frac{1}{\dim(\CC)}\right) 
= 
\frac{d_{\hm}^2}{\dim(\CC)}\,.
\end{equation}
Hence, 
$$
\frac{\dim(\CC)}{\hm(\dim(\CC))} = d_{\hm}^2 \neq 1\,,
$$
and we have completed the proof of statement (i). 

Since $\BQ(\dim(\CC))$ is a subfield of $\BQ(S)$, it is abelian and hence Galois over $\BQ$. By (i), there is no nontrivial element of $G_{\CC}$ fixing $\dim(\CC)$. By the fundamental theorem of Galois theory, $\BQ(\dim(\CC)) = \BQ(S)$. By the definition of $\BQ(S)$, we always have the inclusions
$$
\BQ(\dim(\CC))  \subseteq \BQ(d_\hm \mid \hm \in G_\CC) \subseteq \BQ(S)\,.
$$
The equality $\BQ(\dim(\CC)) = \BQ(S)$ implies $\BQ(S)=  \BQ(d_\hm \mid \hm \in G_\CC)$.
\end{proof}

\begin{cor}\label{cor:unique-sph}
If $\CC$ is a transitive modular category, then the underlying braided fusion category has a unique  pivotal structure up to isomorphism.
\end{cor}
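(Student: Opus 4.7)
The plan is to reduce uniqueness to the triviality of $\operatorname{Aut}_\otimes(\id_\CC)$, the group of monoidal natural automorphisms of the identity functor. The set of pivotal structures on any fusion category, when nonempty, forms a torsor over $\operatorname{Aut}_\otimes(\id_\CC)$; since $\CC$ is modular it already carries a canonical spherical (hence pivotal) structure, so it suffices to show $\operatorname{Aut}_\otimes(\id_\CC)$ is trivial.

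A monoidal natural automorphism $\eta$ of $\id_\CC$ is specified by scalars $\eta_X\in\BC^\times$ for $X\in\irr(\CC)$ satisfying $\eta_Z=\eta_X\eta_Y$ whenever $Z$ is a simple summand of $X\otimes Y$. In particular, taking $Y=X^*$ with $\1\subset X\otimes X^*$ forces $\eta_{X^*}=\eta_X^{-1}$, and the constraint $\eta_Z=\eta_X\eta_Y$ shows that $\eta$ factors through the universal grading $\CC=\bigoplus_{g\in U(\CC)}\CC_g$. This gives the standard identification $\operatorname{Aut}_\otimes(\id_\CC)\cong\widehat{U(\CC)}$, so it remains to prove that the universal grading group $U(\CC)$ is trivial, equivalently that $\CC=\CC_\ad$.

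For this I would invoke the Gelaki--Nikshych duality theorem for modular categories, which asserts $\CC_\ad=C_\CC(\CC_\pt)$. By Theorem~\ref{thm:1}(ii), $\CC_\pt\simeq\Vec$, so every object of $\CC$ trivially centralizes $\CC_\pt$ and hence $C_\CC(\CC_\pt)=\CC$. Combining gives $\CC_\ad=\CC$, so $U(\CC)$ is trivial and the pivotal structure on $\CC$ is unique.

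The main obstacle is not the argument itself, which is short, but rather the identification and invocation of the correct external inputs: the torsor description of pivotal structures, the isomorphism $\operatorname{Aut}_\otimes(\id_\CC)\cong\widehat{U(\CC)}$, and Gelaki--Nikshych's duality $\CC_\ad=C_\CC(\CC_\pt)$ for nondegenerate braided fusion categories. Once Theorem~\ref{thm:1}(ii) is in hand, the rest is a two-line consequence of standard structural results.
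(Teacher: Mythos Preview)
Your proof is correct and is essentially the same argument as the paper's, just unpacked into its constituent pieces. The paper cites \cite[Lem.~2.4]{RankFinite}, which already packages the bijection between pivotal structures and $\irr(\CC_\pt)$ for a modular category, and then invokes Theorem~\ref{thm:1}(ii) to conclude $\irr(\CC_\pt)$ is trivial. You instead spell out the standard chain behind that lemma: pivotal structures form an $\operatorname{Aut}_\otimes(\id_\CC)$-torsor, $\operatorname{Aut}_\otimes(\id_\CC)\cong\widehat{U(\CC)}$, and for modular $\CC$ one has $U(\CC)\cong \irr(\CC_\pt)$ (which you recover via $\CC_\ad=C_\CC(\CC_\pt)$). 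Both routes reduce to the same input, Theorem~\ref{thm:1}(ii); the paper's is shorter, yours makes the mechanism more transparent.
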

\begin{proof}
By \cite[Lem. 2.4]{RankFinite}, there is a bijective correspondence between $\irr(\CC_{\pt})$ and isomorphism classes of pivotal structures of the underlying fusion category of $\CC$. By Theorem \ref{thm:1} (ii),  $\irr(\CC_{\pt})$ is trivial since $\CC$ is transitive. Therefore, the underlying pivotal structure of the modular category $\CC$ is the only one up to isomorphism.
\end{proof}

Recall that a fusion category $\CC$ is called \emph{weakly integral} if $\FPdim(\CC) \in \BZ$, and is said to be \emph{trivial} if it is tensor equivalent to $\Vec$. 

\begin{cor}\label{cor:int-cat}
If $\CC$ is a transitive modular category and $\DD \subset \CC$ a nontrivial fusion subcategory, then $\dim(\DD) \not\in \BZ$. In particular, $\CC$ does not contain any nontrivial weakly integral fusion subcategories.
\end{cor}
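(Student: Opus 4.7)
The plan is a proof by contradiction. Suppose $\dim(\DD) \in \BZ$ for a nontrivial fusion subcategory $\DD \subset \CC$; then $\dim(\DD)$ is invariant under every $\hs \in G_{\CC}$. Squaring equation \eqref{eq:sgn-1} specialized at $Y=\1$ and combining with \eqref{eq:1}, which gives $\hs(\dim(\CC)) = \dim(\CC)/d_{\hs}^{2}$, I would obtain for each $\hs \in G_{\CC}$ the identity
$$
d_{\hs}^{2}\cdot \dim(\DD) \;=\; \sum_{X \in \irr(\DD)} d_{\hs(X)}^{2}.
$$

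Next I would sum this identity over $\hs \in G_{\CC}$. The key input is the regularity of the $G_{\CC}$-action established in Proposition \ref{p:sizeofGalois}: for each fixed $X \in \irr(\CC)$ the map $\hs \mapsto \hs(X)$ is a bijection $G_{\CC} \to \irr(\CC)$, so $\sum_{\hs} d_{\hs(X)}^{2} = \dim(\CC)$. Performing the sum and cancelling the nonzero factor $\dim(\CC)$ would yield the numerical equality $\dim(\DD) = |\irr(\DD)|$.

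Finally, I would apply the distinguished Galois element $\hs_{0}$ of Corollary \ref{cor:FP} to both sides of $\dim(\DD) = |\irr(\DD)|$. Since $\dim(\DD)$ and $|\irr(\DD)|$ are rational integers they are fixed, while $\hs_{0}(d_{X}) = \FPdim(X)$ transforms the sum $\dim(\DD) = \sum_{X \in \irr(\DD)} d_{X}^{2}$ into $\FPdim(\DD) = \sum_{X \in \irr(\DD)} \FPdim(X)^{2}$. The resulting equality $\FPdim(\DD) = |\irr(\DD)|$ combined with $\FPdim(X) \ge 1$ (equality exactly when $X$ is invertible) would force every simple object of $\DD$ to be invertible, so $\DD \subseteq \CC_{\pt}$; but $\CC_{\pt} \simeq \Vec$ by Theorem \ref{thm:1}(ii), contradicting the nontriviality of $\DD$. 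For the ``in particular'' clause, if $\DD$ were weakly integral then each $\FPdim(X)^{2} \in \BZ$ for $X \in \irr(\DD)$ would be Galois-fixed and hence equal to its Galois conjugate $d_{X}^{2}$, giving $\dim(\DD) = \FPdim(\DD) \in \BZ$, which has just been ruled out. No conceptual obstacle is anticipated; the only point requiring care is the bookkeeping of how $\hs$ acts on the global dimension $\dim(\CC)$ via \eqref{eq:1} when deriving the displayed identity.
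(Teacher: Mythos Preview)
Your argument is correct, and it takes a genuinely different route from the paper's.

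The paper's proof runs as follows: assuming $\dim(\DD)\in\BZ$, it applies $\hs_0$ to obtain $\FPdim(\DD)\in\BZ$, so $\DD$ is weakly integral; then it invokes \cite[Prop.~8.27]{ENO} to conclude $\FPdim(X)^2\in\BZ$ for every $X\in\irr(\DD)$, hence $d_X^2\in\BZ$ after applying $\hs_0^{-1}$. Since $d_X$ is an algebraic unit (Lemma~\ref{lem:1}), this forces $d_X^2=1$, and Theorem~\ref{thm:1}(i) gives $X\cong\1$.

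Your approach bypasses both the external reference \cite[Prop.~8.27]{ENO} and the algebraic-unit argument. The averaging identity
\[
d_{\hs}^2\,\dim(\DD)=\sum_{X\in\irr(\DD)} d_{\hs(X)}^2
\]
(which you obtain correctly from \eqref{eq:sgn-1} squared at $Y=\1$ together with \eqref{eq:1}) summed over $\hs\in G_\CC$ via the regularity of Proposition~\ref{p:sizeofGalois} yields the clean numerical equality $\dim(\DD)=|\irr(\DD)|$. Applying $\hs_0$ then gives $\FPdim(\DD)=|\irr(\DD)|$, and the standard inequality $\FPdim(X)\ge 1$ forces every simple of $\DD$ to be invertible, whence $\DD\subseteq\CC_\pt\simeq\Vec$ by Theorem~\ref{thm:1}(ii). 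This is a pleasant, self-contained argument that extracts more from the regularity of the Galois action than the paper does; the paper's route is shorter on the page but leans on an outside structural result about weakly integral categories. Your handling of the ``in particular'' clause is also fine: $\FPdim(X)^2\in\BZ$ is Galois-fixed, and since $d_X^2=\hs_0^{-1}(\FPdim(X)^2)$, the two coincide, giving $\dim(\DD)=\FPdim(\DD)\in\BZ$.
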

\begin{proof}
Suppose $\DD$ is a fusion subcategory of $\CC$ such that $\dim(\DD) \in \BZ$.  Let $\hs_0\in G_{\CC}$ be the canonical element realizing the Frobenius-Perron dimension in Corollary \ref{cor:FP}. Then $\hs_0(\dim(\DD)) = \FPdim(\DD)$ and hence  $\FPdim(\DD) \in \BZ$. In other words, $\DD$ is weakly integral. By \cite[Prop.\ 8.27]{ENO}, for any $\hm \in \irr(\DD)$, $\hs_0(d_\hm^2) = \FPdim(\hm)^2 \in \BZ$. Therefore, $d_{\hm}^2 \in \BZ$. By Lemma \ref{lem:1}, $d_{\hm}$ is a real algebraic unit for any $\hm \in G_{\CC}$, and so $d_\hm^2 = 1$. However, by Theorem 3.5, this means $\hm =\hat\id$ and hence $\irr(\DD) = \{\1\}$. This proves the first statement of the corollary.

Note that every weakly integral fusion category $\B$ satisfies $\FPdim(\B)=\dim(\B) \in \BZ$ (cf.~\cite{ENO}). Therefore, if $\B$ is a weakly integral fusion subcategory of $\CC$, then  $\B$ must be trivial by the preceding assertion. 
\end{proof}

\begin{rmk}
As a consequence of Corollary \ref{cor:int-cat}, if $\CC$ is a transitive modular category satisfying $\dim(\CC) \in \BZ$, then $\CC$ is trivial.
\end{rmk}

In the following, we study fusion subcategories and Deligne products of transitive modular categories.

\begin{cor}\label{cor:fus-subcat}
Every fusion subcategory of a transitive modular category $\CC$ is a modular subcategory of $\CC$. 
\end{cor}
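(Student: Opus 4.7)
The plan is to show that the M\"uger center $\DD' = C_\DD(\DD)$ of any fusion subcategory $\DD \subset \CC$ is forced to be trivial, which by definition makes $\DD$ modular (once one observes that $\DD$ inherits a premodular structure from $\CC$).

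First, I would note that $\DD$ canonically inherits the braiding and the spherical pivotal structure from $\CC$, so $\DD$ is a premodular category in its own right. (The uniqueness of the pivotal structure from Corollary \ref{cor:unique-sph} is a bonus but is not needed here, since we only need the restricted structure to exist.) Thus the only thing to verify is that the restricted braiding on $\DD$ is nondegenerate, equivalently, that the M\"uger center $\DD' = C_\DD(\DD)$ is equivalent to $\Vec$.

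Next, I would observe that $\DD'$ is by construction a symmetric fusion subcategory of $\CC$. By Deligne's theorem (cited in Section \ref{subsec:pre}), any symmetric fusion category has integer global dimension, so $\dim(\DD') \in \BZ$. But $\DD'$ is a fusion subcategory of the transitive modular category $\CC$, and Corollary \ref{cor:int-cat} asserts that no nontrivial fusion subcategory of $\CC$ has integer dimension. Therefore $\DD' \simeq \Vec$, which means that the braiding of $\DD$ is nondegenerate and $\DD$ is modular.

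There is essentially no obstacle: the heavy lifting has already been done in Corollary \ref{cor:int-cat}, and the only conceptual point to flag is that the M\"uger center of $\DD$ is taken inside $\DD$ itself (not inside $\CC$), but that it is still a symmetric, hence weakly integral, fusion subcategory of $\CC$, which is exactly what Corollary \ref{cor:int-cat} is designed to rule out.
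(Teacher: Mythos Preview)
Your proof is correct and follows essentially the same route as the paper: inherit the premodular structure on $\DD$, observe that its M\"uger center $\DD'$ is a symmetric fusion subcategory of $\CC$ with integer global dimension by Deligne's theorems, and then invoke Corollary \ref{cor:int-cat} to force $\DD' \simeq \Vec$. The paper phrases the Deligne step slightly differently (writing $\DD' \simeq \Rep(H)$ so that $\dim(\DD') = |H|$), but this is the same argument.
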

\begin{proof}
Let $\DD$ be a fusion subcategory of $\CC$. Then $\DD$ is premodular with the braiding and the spherical pivotal structure inherited from $\CC$. Now consider the M\"uger center $\DD' = C_{\DD}(\DD)$ of $\DD$. It is a symmetric fusion subcategory of $\DD$ and hence of $\CC$. Then, by \cite{Del1,Del2}, there exists a finite group $H$ such that $\DD'$ is tensor equivalent to $\Rep(H)$. In particular, $\dim(\DD')=|H|$ is an integer. By Corollary \ref{cor:int-cat}, $\DD'$ is equivalent  $\Vec$ as a fusion category. Therefore, $\DD$ is modular.
\end{proof}

\begin{example}\label{ex:Fib-2}
The quantum group modular category $\CC=\CC(\sl_2,3)^{(0)}$ is a Fibonacci modular category with two isomorphism classes of simple objects
$\1$ and $\tau$ such that $\tau \otimes \tau = \1 \oplus \tau$ (cf.~\cite{SRW}).  The S-matrix of $\CC$ is given by
$$
S=\begin{pmatrix}
1 & d_\tau\\
d_{\tau} & -1
\end{pmatrix}$$ 
where $d_\tau = \frac{1+\sqrt{5}}{2}$. Therefore, $\BQ(S)=\BQ(\sqrt{5})$ and $G_\CC \cong \BZ_2$ with the generator $\hs: \sqrt{5} \mapsto -\sqrt{5}$. Therefore, $\CC$ is transitive.  
\end{example}

Recall that a modular category $\CC$ is \emph{prime} if every modular subcategory 
of $\CC$  
is equivalent to $\CC$ or $\Vec$. By \cite[Thm.~4.5]{Muger-Structure}, every modular category admits a \emph{prime factorization}, i.e., it is equivalent to a finite Deligne product of prime modular categories.

\begin{thm}\label{thm:prime-decomp}
Let $\CC$ be a transitive modular category. Then:
\begin{itemize}
\item[\rm (i)] Every fusion subcategory of $\CC$ is a transitive modular subcategory, and 
\item[\rm (ii)] the prime factorization of $\CC$ is unique up to permutation of factors.
\end{itemize}
\end{thm}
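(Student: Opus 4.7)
The plan is to prove (i) first and then use it as a tool for (ii). For part (i), let $\DD \subseteq \CC$ be a fusion subcategory. By Corollary~\ref{cor:fus-subcat} it is modular, so M\"uger's theorem yields $\CC \simeq \DD \boxtimes \DD^c$. Applying the left-hand inequality of Lemma~\ref{lem:gal-on-bot}(iii) to this decomposition gives
\[
1 \;=\; |\Orb(\CC)| \;\ge\; |\Orb(\DD)| \cdot |\Orb(\DD^c)|,
\]
forcing $|\Orb(\DD)| = |\Orb(\DD^c)| = 1$, so $\DD$ is a transitive modular subcategory.

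For part (ii), existence of a prime factorization is \cite[Thm.~4.5]{Muger-Structure}, so only uniqueness requires proof. I would induct on the number $n$ of prime factors. Suppose $\CC = \AA_1 \boxtimes \cdots \boxtimes \AA_n = \B_1 \boxtimes \cdots \boxtimes \B_m$ are two prime factorizations; iterating Lemma~\ref{lem:gal-on-bot}(iii) shows that each $\AA_i$ and each $\B_j$ is transitive. For each $i$, the intersection $\B_1 \cap \AA_i$ is a fusion subcategory of $\CC$, hence a modular transitive subcategory of both $\B_1$ and $\AA_i$ by part (i). Primality of $\AA_i$ forces $\B_1 \cap \AA_i \in \{\Vec, \AA_i\}$ and primality of $\B_1$ forces $\B_1 \cap \AA_i \in \{\Vec, \B_1\}$, so a nontrivial intersection for any $i$ immediately yields $\B_1 = \AA_i$ and the match is made.

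The remaining and main case is $\B_1 \cap \AA_i = \Vec$ for every $i$, where I aim for a contradiction. Choose a nontrivial simple $X \in \irr(\B_1)$ and write $X = X_1 \boxtimes \cdots \boxtimes X_n$ with $X_i \in \irr(\AA_i)$ under the Deligne product structure of $\CC$. Then $X \otimes X^* = (X_1 \otimes X_1^*) \boxtimes \cdots \boxtimes (X_n \otimes X_n^*)$ lies in $\B_1$, and for each $i$ and every simple $W_i \in \irr(\AA_i)$ appearing in $X_i \otimes X_i^*$, the summand $\1 \boxtimes \cdots \boxtimes W_i \boxtimes \cdots \boxtimes \1$ of $X \otimes X^*$ appears with positive multiplicity and therefore lies in $\B_1 \cap \AA_i = \Vec$, forcing $W_i = \1$. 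Thus $X_i \otimes X_i^* \cong \1$, so $X_i$ is invertible in $\AA_i$. Since $\AA_i$ is transitive by (i), Theorem~\ref{thm:1}(ii) gives $X_i = \1$ for every $i$, whence $X = \1$, a contradiction. Having matched $\B_1 = \AA_{i_0}$, the centralizer $\B_1^c$ is the Deligne product of the remaining $\AA_j$ ($j \ne i_0$) and also equals $\B_2 \boxtimes \cdots \boxtimes \B_m$; this is a smaller transitive modular category with two prime factorizations, so the induction closes. The main obstacle is precisely ruling out this ``diagonally embedded'' case; the clean resolution is to propagate triviality into each tensor factor $X_i$ via part (i) and Theorem~\ref{thm:1}(ii), which crucially uses that each $\AA_i$ inherits transitivity and therefore has no nontrivial invertible objects.
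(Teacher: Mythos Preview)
Your proof of (i) is correct and matches the paper's approach: both invoke Corollary~\ref{cor:fus-subcat} and M\"uger's double centralizer theorem to get $\CC \simeq \DD \boxtimes C_\CC(\DD)$; you then read off transitivity of $\DD$ from Lemma~\ref{lem:gal-on-bot}(iii), while the paper argues it directly from Lemma~\ref{lem:gal-on-bot}(ii), which amounts to the same thing.

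For (ii) your argument is correct but takes a genuinely different route from the paper. The paper observes (via Theorem~\ref{thm:1}(ii) or Corollary~\ref{cor:int-cat}) that $\CC_\pt \simeq \Vec$ and then simply cites \cite[Prop.~2.2]{DMNO}, which asserts that any modular category with trivial pointed part has a unique prime factorization. You instead give a self-contained inductive matching argument, and the crux---ruling out the case $\B_1 \cap \AA_i = \Vec$ for all $i$ by showing each $X_i$ is invertible and then invoking Theorem~\ref{thm:1}(ii) on the transitive factor $\AA_i$---is essentially a hands-on proof of the relevant special case of the DMNO result. Both approaches ultimately hinge on the absence of nontrivial invertibles in transitive modular categories; the paper outsources the combinatorics of uniqueness to the literature, while you work it out explicitly. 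Your route is longer but more transparent and avoids the external reference.
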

\begin{proof}
Let $\AA$ be an arbitrary fusion subcategory of $\CC$. It follows from Corollary \ref{cor:fus-subcat} that $\AA$ is a modular subcategory of $\CC$. By the double centralizer theorem \cite[Thm.~4.2]{Muger-Structure}, $\B := C_{\CC}(\AA)$, the M\"uger centralizer of $\AA$ in $\CC$, is modular. Moreover, there is an equivalence of modular categories
$$\CC\simeq\AA \bot \B\,.$$ 

As noted in Section \ref{s:galois}, for any $X \boxtimes Y \in \irr(\CC) = \irr(\AA\boxtimes \B)$ and any $\s \in \GQ$, we have $\hs_{\CC}(X\boxtimes Y) = \hs_{\AA}(X) \boxtimes \hs_{\B}(Y)$. Since $\CC$ is transitive, for any $X \in \irr(\AA)$, there exists $\s \in \GQ$ such that 
$$
\hs_\AA(\1_{\AA}) \bot \hs_\B(\1_{\B}) = \hs_{\CC}(\1_{\AA}\bot \1_{\B})  = X \bot \1_{\B}\,.
$$ 
Therefore, $G_\AA$ acts transitively on $\irr(\AA)$.  This completes the proof of (i).

It follows from \cite[Thm.~4.5]{Muger-Structure} that $\CC$ admits  a prime factorization. By Corollary \ref{cor:int-cat}, $\CC_{\pt} \simeq \Vec{}$. Therefore, by \cite[Prop.~2.2]{DMNO}, the prime factorization of $\CC$ is unique up to permutation of factors.
\end{proof}

\begin{prop}\label{prop:trans-prod}
If $\AA, \B$ are transitive modular categories and $\CC = \AA\bot\B$, then \begin{equation}\label{eq:orb-AB}
|\Orb(\CC)| 
= 
\frac{|G_{\AA}|\cdot |G_{\B}|}{|G_{\CC}|}
=
[\BQ(\dim(\AA)) \cap \BQ(\dim(\B)): \BQ]\,.
\end{equation}
In particular, if $\AA, \B$ are nontrivial modular categories and they are Galois conjugate to each other, then $\AA \bot \B$ is not transitive.
\end{prop}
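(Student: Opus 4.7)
The plan is to count $G_\CC$-orbits on $\irr(\CC)=\irr(\AA)\times\irr(\B)$ directly using the componentwise Galois action provided by Lemma \ref{lem:gal-on-bot}(ii), and then to match the result against the size formula in Lemma \ref{lem:gal-on-bot}(i) after identifying $\BQ(S_\AA)$ and $\BQ(S_\B)$ with $\BQ(\dim(\AA))$ and $\BQ(\dim(\B))$ via Theorem \ref{thm:1}(iii).

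First I would exploit transitivity of the two factors: by Proposition \ref{p:sizeofGalois}, $|\irr(\AA)| = |G_\AA|$ and $|\irr(\B)| = |G_\B|$, and both actions are regular. Lemma \ref{lem:gal-on-bot}(ii) then gives
$$\stab_{G_\CC}(X\bot Y)\subseteq \stab_{G_\AA}(X)\times\stab_{G_\B}(Y) = \{(\hat\id,\hat\id)\},$$
so $G_\CC$ acts freely on the $|G_\AA|\cdot|G_\B|$ simple objects of $\CC$. Each $G_\CC$-orbit therefore has size $|G_\CC|$, which yields the first equality $|\Orb(\CC)| = |G_\AA|\cdot|G_\B|/|G_\CC|$. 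For the second equality, Theorem \ref{thm:1}(iii) applied to each transitive factor gives $\BQ(S_\AA) = \BQ(\dim(\AA))$ and $\BQ(S_\B) = \BQ(\dim(\B))$, so the field $\BF$ of Lemma \ref{lem:gal-on-bot}(i) equals $\BQ(\dim(\AA))\cap\BQ(\dim(\B))$; substituting $|G_\CC| = |G_\AA||G_\B|/[\BF:\BQ]$ from that lemma completes \eqref{eq:orb-AB}.

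For the ``in particular'' statement, suppose for contradiction that $\CC = \AA\bot\B$ is transitive with $\AA, \B$ nontrivial and mutually Galois conjugate. Theorem \ref{thm:prime-decomp}(i) then forces both $\AA$ and $\B$ to be transitive modular subcategories. Because $\BQ(S_\AA)\subseteq\BQ_N$ is an abelian extension of $\BQ$, it is stable under every element of $\GQ$, so Galois conjugacy of $\AA$ and $\B$ yields $\BQ(S_\B) = \BQ(S_\AA)$, and hence $\BF = \BQ(\dim(\AA))$. By Theorem \ref{thm:1}(i), nontriviality of $\AA$ furnishes some $\hm\in G_\AA$ with $\hm(\dim(\AA))\neq\dim(\AA)$, giving $[\BQ(\dim(\AA)):\BQ] > 1$; then \eqref{eq:orb-AB} forces $|\Orb(\CC)| > 1$, contradicting the transitivity of $\CC$.

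The only delicate point is the freeness of the $G_\CC$-action on $\irr(\CC)$, which must be extracted from the fiber-product description of $G_\CC$; once that is derived from the componentwise formula and the regularity of each factor, the remaining work is a short Galois-theoretic computation.
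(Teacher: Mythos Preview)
Your proof is correct and follows essentially the same route as the paper: the paper phrases the orbit count as ``cosets of the subgroup $G_\AA\bullet G_\B$ in $G_\AA\times G_\B$ under left multiplication,'' which is exactly your free-action/orbit--stabilizer computation in different words, and the second equality and the ``in particular'' clause are handled identically via Theorem~\ref{thm:1}(iii) and Lemma~\ref{lem:gal-on-bot}(i). One small redundancy: in the ``in particular'' part you invoke Theorem~\ref{thm:prime-decomp}(i) to deduce that $\AA$ and $\B$ are transitive, but transitivity of $\AA$ and $\B$ is already the standing hypothesis of the proposition, so that step (and the contradiction framing) can be dropped in favor of a direct argument.
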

\begin{proof}
Since $\AA, \B$ are transitive, as discussed at the beginning of this section, the action of $G_{\AA}$ (resp.~$G_{\B}$) on $\irr(\AA)=G_\AA$ (resp.~$G_{\B} = \irr(\B)$) is just the left multiplication. By Lemma~\ref{lem:gal-on-bot}, $G_\AA\bullet G_\B \cong G_{\CC}$ is a subgroup of $G_{\AA} \times G_{\B}$, and the action of $G_{\CC}$ on $\irr(\CC) = \irr(\AA) \times \irr(\B) = G_{\AA} \times G_{\B}$ is equivalent to the left multiplication by $G_\AA\bullet G_\B$. Therefore, the orbits of this $G_\AA\bullet G_\B$-action are the cosets of $G_\AA\bullet G_\B$ in $G_{\AA}\times G_{\B}$, which implies the first equality in \eqref{eq:orb-AB}. The second equality in \eqref{eq:orb-AB} is a direct application of Lemma~\ref{lem:gal-on-bot}(i) and Theorem~\ref{thm:1}(iii).

Suppose $\AA$ and $\B$ are nontrivial Galois conjugate modular categories. Then 
 $\BQ(\dim(\AA))=\BQ(\dim(\B))$  and  $\BQ(\dim(\AA))$ is a proper extension of $\BQ$ (cf.~Theorem \ref{thm:1}). Therefore, we have 
 $$
 |\Orb(\AA \bot \B)| = [\BQ(\dim(\AA)) \cap \BQ(\dim(\B)): \BQ] = [\BQ(\dim(\AA)): \BQ] > 1,
$$  
which means $\AA \bot \B$ is not transitive. This completes the proof of the last assertion.
\end{proof}

The following corollary provides a necessary and sufficient condition for 
the
transitivity of a Deligne product. 

\begin{cor}\label{cor:coprime}
Let $\CC$, $\DD$ be modular categories. Then $\CC\bot\DD$ is transitive if and only if the following two conditions hold: both $\CC$, $\DD$ are transitive and $$\BQ(\dim(\CC)) \cap \BQ(\dim(\DD)) = \BQ\,.$$
\end{cor}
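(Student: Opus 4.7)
The plan is to prove both directions by direct appeal to the two preceding results, Theorem \ref{thm:prime-decomp} and Proposition \ref{prop:trans-prod}, without any new computation.

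For the ``if'' direction, I would assume that $\CC$ and $\DD$ are both transitive and that $\BQ(\dim(\CC)) \cap \BQ(\dim(\DD)) = \BQ$. Proposition \ref{prop:trans-prod} applies directly, giving
$$
|\Orb(\CC \bot \DD)| = [\BQ(\dim(\CC)) \cap \BQ(\dim(\DD)) : \BQ] = [\BQ : \BQ] = 1,
$$
which is precisely the statement that $\CC \bot \DD$ is transitive.

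For the ``only if'' direction, I would assume $\CC \bot \DD$ is transitive. Since $\CC$ and $\DD$ embed as fusion subcategories of $\CC \bot \DD$ via $X \mapsto X \bot \1_{\DD}$ and $Y \mapsto \1_{\CC} \bot Y$, Theorem \ref{thm:prime-decomp}(i) forces each of $\CC$ and $\DD$ to be transitive. With both factors transitive, Proposition \ref{prop:trans-prod} again applies, yielding
$$
1 = |\Orb(\CC \bot \DD)| = [\BQ(\dim(\CC)) \cap \BQ(\dim(\DD)) : \BQ],
$$
so the intersection must equal $\BQ$.

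There is no real obstacle here; the statement is essentially an immediate packaging of Proposition \ref{prop:trans-prod} together with the hereditary transitivity from Theorem \ref{thm:prime-decomp}(i). The only small point to be careful about is ensuring that $\CC$ and $\DD$ genuinely sit inside $\CC \bot \DD$ as fusion subcategories so that Theorem \ref{thm:prime-decomp}(i) is applicable, but this is standard for a Deligne product of modular categories.
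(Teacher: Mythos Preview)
Your proposal is correct and follows essentially the same approach as the paper's own proof: both directions are obtained by combining Theorem \ref{thm:prime-decomp}(i) (to get transitivity of the factors) with Proposition \ref{prop:trans-prod} (to translate transitivity of the product into the field-intersection condition). The paper's argument is virtually identical, differing only in that it does not spell out the embedding of $\CC$ and $\DD$ as fusion subcategories of $\CC\bot\DD$.
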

\begin{proof}
If $\CC\bot\DD$ is transitive, then, by Theorem \ref{thm:prime-decomp}, $\CC$ and $\DD$ are also transitive. By Proposition~\ref{prop:trans-prod}, $1 = |\Orb(\CC\bot\DD)| = [\BQ(\dim(\CC)) \cap \BQ(\dim(\DD)):\BQ]$, and  so $\BQ(\dim(\CC)) \cap \BQ(\dim(\DD)) = \BQ$. 

Conversely, assume $\CC$ and $\DD$ are transitive modular categories and $\BQ(\dim(\CC)) \cap \BQ(\dim(\DD)) = \BQ$, then $[\BQ(\dim(\CC)) \cap \BQ(\dim(\DD)): \BQ] = 1$. It follows from Proposition~\ref{prop:trans-prod} that $\CC\bot\DD$ is transitive.
\end{proof}

\section{Primality of transitive quantum group modular categories}\label{sec:example}

A quantum group modular category $\CC(\mathfrak{g}, k)$ can be constructed from a simple Lie algebra $\mathfrak{g}$ and a positive integer $k$, which is called the \emph{level}. This modular category is a semisimplification of the tilting module category of the quantum group $U_q(\mathfrak{g})$ specialized at a root of unity $q$ determined by $k$ and $\mathfrak{g}$. The readers are referred to \cite{BakalovKirillov, Row05} and the references therein for details.

In this paper, we focus on the cases when $\mathfrak{g} = \sl_2$. Let $k$ a positive integer and $q=\exp\left(\frac{\pi i }{k+2}\right)$. For any $r \in \BQ$, we define
$$
q^r := \exp\left(\frac{\pi i r }{k+2}\right)\,.
$$
The quantum integer $[n]_\zeta$ for any root of unity $\zeta \ne \pm 1$ is defined as
$$
[n]_\zeta := \frac{\zeta^n - \zeta^{-n}}{\zeta-\zeta^{-1}}\,.
$$
The isomorphism classes of simple objects of $\CC(\sl_2, k)$ are indexed by the integers $a \in [0,k]$. The modular data $(S, T)$ of the modular category $\CC(\sl_2, k)$ is given by (cf. \cite{BakalovKirillov}, see also \cite{RT} with a different convention)
\begin{equation}\label{eq:su2-st}
S_{a, b} = [(a+1)(b+1)]_q\,,
\quad
T_{a, b} = \delta_{a,b}\, q^{a(a+2)/2}\,,
\quad
0 \le a, b \le k\,.
\end{equation}
One can replace $q$ by any Galois conjugate $q' = q^l$ for some $l$ relatively prime to $2(k+2)$ to get another modular category $\CC(\sl_2, k, q^l)$. The simple objects of this modular category are also indexed by the integers in $[0,k]$ and its modular data is also given by \eqref{eq:su2-st} with $q$ replaced by $q^l$. 

For the discussions of the remainder of this paper, we will simply write $\AA_{k,l}$ for the modular category $\CC(\sl_2, k, q^l)$ where $\gcd(l, 2(k+2))=1$. Let $V_a$ denote the isomorphism class of the simple objects of $\AA_{k,l}$ indexed by the integer $a \in [0,k]$. Then $V_0$ is the isomorphism class of the tensor unit $\1$. The fusion rules of $\AA_{k,l}$ are the same for any possible integer $l$, and they are given by (cf.~\cite{BakalovKirillov})
\begin{equation}\label{eq:su2}
N_{a,b}^c = 
\begin{cases}
1, & \text{ if } |a-b| \le c \le \min(a+b, 2k-a-b) \\
    & \qquad\qquad\qquad\qquad \qquad \text{ and } c \equiv a+b\pmod{2}\,;\\
0, & \text{ otherwise.}
\end{cases}
\end{equation}

One can observe directly from the fusion rules that $\AA_{k,l}$ is $\BZ_2$-graded, where the homogeneous component $A_{k,l}^{(j)}$, $j\in \{0,1\}$, is the $\BC$-linear subcategory (additively) generated by the simple objects $V_a$ satisfying $a \equiv j \pmod{2}$ for any integer $a \in [0,k]$. Moreover, the adjoint fusion subcategory of $\AA_{k,l}$ is $\AA_{k,l}^{(0)}$, which is a modular subcategory of $\AA_{k,l}$ if and only if $k$ is odd (cf.~\cite{Bruguieres, TurWen}), and 
\begin{equation}\label{eq:irr-B}
\irr(\AA^{(0)}_{k,l}) = \left\{V_{2j}\mid 0 \le j \le  \frac{k-1}{2}\right\}\,.
\end{equation}
In particular, when $k=1$, $\BB_{1,l}$ is tensor equivalent to $\Vec{}$, and when $k = 3$, $\BB_{3,l}$ is a Fibonacci modular category (see Example~\ref{ex:Fib-2}). 

For any fusion category $\CC$, we say that a simple object $X \in\irr(\CC)$ \emph{tensor generates} $\CC$ if every simple object of $\CC$ is isomorphic to a  summand of a tensor power of $X$. 
The following observation could be known to experts but  we include it here for completeness.

\begin{lem}\label{lem:gen}
For any positive odd integer $k$ and $l \in \left(\Zn{2(k+2)}\right)^\times$, every nontrivial simple object of $\BB_{k,l}$ tensor generates $\BB_{k,l}$. In particular, $\BB_{k,l}$ is a prime modular category.
\end{lem}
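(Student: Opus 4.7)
The plan is to read off everything from the explicit fusion rule \eqref{eq:su2}, which does not depend on $l$, and argue that the tensor subcategory generated by any nontrivial simple of $\BB_{k,l}$ must already contain $V_2$, and then that $V_2$ alone generates all of $\irr(\BB_{k,l}) = \{V_0, V_2, \dots, V_{k-1}\}$. The case $k = 1$ is trivial since $\BB_{1,l} \simeq \Vec$, so I assume $k \ge 3$ (in which case $V_2 \in \irr(\BB_{k,l})$).

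First, I would show that for every $1 \le j \le (k-1)/2$, the simple $V_2$ appears in $V_{2j} \otimes V_{2j}$. By \eqref{eq:su2}, $V_2 \subset V_{2j} \otimes V_{2j}$ iff $2$ is an even integer satisfying $0 = |2j - 2j| \le 2 \le \min(4j, 2k - 4j)$. The inequality $2 \le 4j$ holds because $j \ge 1$, and $2 \le 2k - 4j$ holds because $j \le (k-1)/2$ forces $2k - 4j \ge 2$. Hence $V_2$ lies in the tensor subcategory $\langle V_{2j} \rangle$ generated by any nontrivial simple.

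Next, I would prove by induction on $m$ that $V_{2m} \in \langle V_2 \rangle$ for every $0 \le 2m \le k-1$. The base cases $m = 0, 1$ are clear. For the inductive step with $2 \le 2m \le k - 3$, the fusion rule \eqref{eq:su2} gives
\begin{equation*}
V_2 \otimes V_{2m} = V_{2m-2} \oplus V_{2m} \oplus V_{2m+2},
\end{equation*}
since the upper truncation bound $\min(2m + 2, 2k - 2m - 2)$ equals $2m + 2$ in that range. Thus $V_{2m+2} \in \langle V_2\rangle$, and the induction reaches $V_{k-1}$. Combining the two steps, every nontrivial simple object tensor generates $\BB_{k,l}$.

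For primality, let $\DD \subset \BB_{k,l}$ be a nontrivial fusion subcategory. Then $\DD$ contains some simple $V_{2j}$ with $j \ge 1$, and by the above $\DD$ contains all simples of $\BB_{k,l}$, so $\DD = \BB_{k,l}$. In particular, $\BB_{k,l}$ has no nontrivial proper modular subcategory, so $\BB_{k,l}$ is prime. The only real care needed is in handling the boundary case $2m = k - 1$, where the truncation makes $V_2 \otimes V_{k-1} = V_{k-3} \oplus V_{k-1}$ rather than a three-term sum; since the induction climbs $V_2 \to V_4 \to \cdots \to V_{k-1}$ using only $2m \le k - 3$ at each inductive step, this boundary does not obstruct the argument.
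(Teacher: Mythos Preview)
Your proof is correct and follows essentially the same route as the paper's: both show that $V_2$ is a summand of $V_{2j}\otimes V_{2j}$ for every $1\le j\le (k-1)/2$ via the bound $2\le \min(4j,2k-4j)$, and that $V_2$ tensor generates $\BB_{k,l}$ by the three-term fusion $V_2\otimes V_{2m}=V_{2m-2}\oplus V_{2m}\oplus V_{2m+2}$ for $1\le m\le (k-3)/2$. The only differences are cosmetic: you present the two steps in the opposite order and add an explicit remark about the boundary case $2m=k-1$, which the paper leaves implicit.
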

\begin{proof}
Since $\BB_{1,l}$ is trivial, the statements are true for $k = 1$. We assume $k \ge 3$. By the fusion rules \eqref{eq:su2}, when $k=3$, $V_2 \o V_2 = V_0 \oplus V_2$; when $k > 3$ is odd, for any $1 \le j \le \frac{k-3}{2}$, we have
$$V_{2j} \otimes V_2 = V_{2j-2} \oplus V_{2j} \oplus V_{2j+2}\,.$$ 
Therefore, $V_2$ tensor generates $\BB_{k,l}$. Moreover, for any $1 \le j \le \frac{k-1}{2}$, we have $2 \le \min(4j, 2k-4j)$, so $N_{2j, 2j}^{2} = 1$, which means $V_2$ is a direct summand of $V_{2j} \o V_{2j}$. Therefore, $V_{2j}$ tensor generates $\BB_{k,l}$.
\end{proof}

For any odd integer $k$ and  $l \in \left(\Zn{2(k+2)}\right)^\times$, the  modular data $(S^{(0)}, T^{(0)})$ of $\AA_{k,l}^{(0)}$ is indexed by $j=0,\dots, \frac{k-1}{2}$, and is given by
\begin{equation}\label{eq:so3-st}
S^{(0)}_{j, m} = [(2j+1)(2m+1)]_{q^l}, 
\quad 
T^{(0)}_{j, m} = \delta_{m,j} q^{2lj(j+1)}, 
\quad 
0 \le j, m \le \frac{k-1}{2}\,.
\end{equation}
It is well-known that the first central charge of $\AA_{k,1}$ is given by (cf.~\cite{BakalovKirillov})
$$
\xi_1(\AA_{k,1}) = \exp\left(\frac{3k\pi i}{4(k+2)}\right)\,.
$$
By definition (see \eqref{eq:anomaly}), the first anomaly of $\AA_{k,1}$ is   
$$
\alpha_1(\AA_{k,1}) = \xi_1(\AA_{k,1})^2 = \exp\left(\frac{3k\pi i}{2(k+2)}\right)\,.
$$
By the fusion rules, $\irr((\AA_{k,1})_{\pt})= \{V_0, V_k\}$, and $(\AA_{k,1})_{\pt}$ is a modular subcategory of $\AA_{k,1}$. By \cite[Cor.~3.27]{DrGNO},  $C_{\AA_{k,1}}((\AA_{k,1})_{\pt}) = \BB_{k,1}$. Therefore,  
$$
\AA_{k,1} \simeq \BB_{k,1} \boxtimes (\AA_{k,1})_{\pt}
$$
as modular categories by the double centralizer theorem \cite[Thm.~4.2]{Muger-Structure}. Consequently, by \cite[Lemma 3.12]{NSW}, $\alpha_1(\AA_{k,1}) = \alpha_1(\BB_{k,1})\cdot\alpha_1((\AA_{k,1})_{\pt})$.
Following \eqref{eq:su2-st}, we have
$$
\alpha_1((\AA_{k,1})_{\pt}) = \frac{1+i^k}{1-i^k} = i^k\,.
$$
Therefore,
\begin{equation}\label{eq:alpha-Bk1}
\alpha_1(\BB_{k,1}) 
= 
\frac{\alpha_1(\AA_{k,1})}{\alpha_1((\AA_{k,1})_{\pt})}
=
\exp\left(\frac{(1-k)k\pi i}{2(k+2)}\right)\,.
\end{equation}

In the literature, $\BB_{k,1}$ is often referred to as the quantum group modular category  ``$\SO(3)$ at level $k$'' or ``PSU(2) at level $k$''. The ribbon categories with these fusion rules for odd $k$ are completely classified in \cite[Cor.~8.2.7]{FK}, with a slightly different parametrization. 

\begin{lem}\label{lem:FK-so3}
For any positive odd integer $k$, the modular categories 
$$
\BB_{k,l}, \quad l \in \left(\frac{\BZ}{{2(k+2)\BZ}}\right)^\times
$$ 
form a complete list of inequivalent ribbon categories with the fusion rules of $\SO(3)$ at level $k$. If $k+2=p>3$ is a prime, each of these modular categories is equivalent to a Galois conjugate of $\BB_{p-2,1} = \CC(\sl_2, p-2)^{(0)}$.
\end{lem}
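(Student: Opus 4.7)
The plan splits the lemma into its two assertions. For the first---completeness and inequivalence of the list $\{\BB_{k,l}\}$---I would invoke \cite[Cor.~8.2.7]{FK} as the classification of ribbon categories with the $SO(3)_k$ fusion rules for odd $k$, and then match the parametrization. The family $\{\BB_{k,l}\}$ arises from varying the root of unity $q^l$ for $l \in (\BZ/2(k+2)\BZ)^\times$ in the quantum group construction, so it realizes every primitive $2(k+2)$-th root of unity and hence every category in the FK list (after identifying parameters). To establish pairwise inequivalence as ribbon categories, I would compare the twist of $V_{2}$ (indexed by $j = 1$): in $\BB_{k,l}^{(0)}$ this twist equals $q^{4l} = \exp(4\pi i l/(k+2))$, and since $k+2$ is odd the reduction map $(\BZ/2(k+2)\BZ)^\times \to (\BZ/(k+2)\BZ)^\times$ is a bijection, so these twists are pairwise distinct as $l$ varies.

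For the second assertion, fix a prime $p = k+2 > 3$. The first step is to check that all modular data of $\BB_{p-2,1}^{(0)}$ lies in $\BQ_p$: each T-entry is $\zeta_p^{j(j+1)}$, and the quantum integer $[n]_q$ for odd $n$ expands as $q^{n-1} + q^{n-3} + \cdots + q^{-(n-1)}$, a polynomial in $q^2 = \zeta_p$, so each S-entry $[(2j+1)(2m+1)]_q$ also lies in $\BQ_p$. Consequently $\gal(\BQ_p/\BQ) \cong (\BZ/p\BZ)^\times$ acts on the modular data via $\sigma_t(\zeta_p) = \zeta_p^t$, producing entries $\zeta_p^{t j(j+1)}$ and $[(2j+1)(2m+1)]_{q'}$ for any $q'$ with $(q')^2 = \zeta_p^t$. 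Letting $l(t) \in (\BZ/2p\BZ)^\times$ be the unique preimage of $t$ under the bijection $(\BZ/2p\BZ)^\times \to (\BZ/p\BZ)^\times$, the choice $q' = q^{l(t)}$ matches the $\sigma_t$-conjugated data entry-by-entry with the modular data of $\BB_{p-2,l(t)}^{(0)}$. Since fusion rules are Galois-invariant (their coefficients are integers), the $\sigma_t$-conjugate of $\BB_{p-2,1}^{(0)}$ is a ribbon category with the $SO(3)_{p-2}$ fusion rules, so by the first part it is equivalent to the unique $\BB_{p-2,l}^{(0)}$ with matching modular data, namely $\BB_{p-2,l(t)}^{(0)}$. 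As $t$ ranges over $(\BZ/p\BZ)^\times$, $l(t)$ ranges over all of $(\BZ/2p\BZ)^\times$, so every $\BB_{p-2,l}^{(0)}$ is a Galois conjugate of $\BB_{p-2,1}^{(0)}$.

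The main obstacle is the bookkeeping required to align the FK parametrization with our index $l$---tracking the correspondence between their root-of-unity (or twist-eigenvalue) parameter and our $q^l$---and invoking the uniqueness part of the classification to conclude the Galois conjugate is equivalent to the correct $\BB_{p-2,l}^{(0)}$ as a ribbon category, not merely as abstract modular data. Beyond this parameter matching, everything reduces to the direct verification in $\BQ_p$ described above.
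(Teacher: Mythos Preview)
Your argument is correct. For the first assertion you proceed as the paper does (invoke \cite[Cor.~8.2.7]{FK}), with the added explicit check that the twist $\theta_{V_2}=q^{4l}$ distinguishes the $\BB_{k,l}$ as $l$ ranges over $(\BZ/2(k+2)\BZ)^\times$.

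For the second assertion your route genuinely differs from the paper's. You compute the Galois-conjugated modular data directly, match it entry-by-entry with that of $\BB_{p-2,l(t)}$ for the unique $l(t)\in(\BZ/2p\BZ)^\times$ reducing to $t$ modulo $p$, and then invoke the uniqueness part of the FK classification to upgrade this to a ribbon equivalence. The paper instead argues by counting: the FK list has exactly $p-1$ members, and since the first anomaly $\alpha_1(\BB_{p-2,1})=\exp\bigl(\tfrac{(3-p)(p-2)\pi i}{2p}\bigr)$ is a primitive $p$-th or $2p$-th root of unity, its $p-1$ Galois conjugates are pairwise distinct; hence the Galois conjugates of $\BB_{p-2,1}$ already furnish at least $p-1$ inequivalent categories with the correct fusion rules and must exhaust the list. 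The paper's approach bypasses precisely the parameter-matching bookkeeping you flag as the main obstacle, at the cost of not identifying which Galois conjugate corresponds to which $l$; your approach pays that bookkeeping cost but produces the explicit bijection $\sigma_t\leftrightarrow\BB_{p-2,l(t)}$.
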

\begin{proof}
The first part follows directly from \cite[Cor.~8.2.7]{FK}. If $k+2=p>3$ is a prime, there are exactly $|(\BZ/2p\BZ)^\times|=p-1$ equivalence classes of  ribbon categories with the fusion rules of $\SO(3)$ at level $k$. Note that all Galois conjugates of $\BB_{k,1}$ have the same fusion rules. Hence, they are equivalent to the modular categories in the list. According to \eqref{eq:alpha-Bk1}, $\alpha_1(\BB_{p-2,1}) = \exp\left(\frac{(3-p)(p-2)\pi i}{2p}\right) \in \BQ_p$, which is a root of unity of order $p$ or $2p$ for $p>3$. By definition, the first anomalies of the Galois conjugates of $\BB_{k,1}$ are the Galois conjugates of $\alpha_1(\BB_{k,1})$. Therefore, there are at least $\varphi(p)  = p-1$ equivalence classes among the Galois conjugates of $\BB_{k,1}$, and we are done by the first assertion.
\end{proof}

Now, we can show a family of these quantum group modular categories are prime and transitive in the following proposition. 
\begin{prop} \label{p:B-trans-prime}
Let $p$ be any odd prime and $l \in \UZ{2p}$. Then the modular category $\AA^{(0)}_{p-2, l}$ is prime and transitive.
\end{prop}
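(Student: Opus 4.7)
The plan is to treat primality and transitivity separately. Primality follows immediately from Lemma~\ref{lem:gen}: since $k = p-2$ is a non-negative odd integer, for $p = 3$ the category $\AA^{(0)}_{1,l} \simeq \Vec$ is vacuously prime, and for $p \ge 5$ the lemma asserts that every nontrivial simple object tensor generates $\AA^{(0)}_{p-2,l}$, ruling out any proper nontrivial fusion subcategory.

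For transitivity, I would compute the Galois action on $\irr(\CC)$ explicitly, where $\CC := \AA^{(0)}_{p-2,l}$. Assume $p \ge 5$. The simple objects are $V_{2m}$ for $0 \le m \le (p-3)/2$, and \eqref{eq:so3-st} gives $S^{(0)}_{j,m} = [(2j+1)(2m+1)]_{q^l}$. Using $q^2 = \zeta_{2p}^2 = \zeta_p$, the expansion $[2j+1]_{q^l} = \sum_{i=-j}^{j} \zeta_p^{li}$ shows that each entry of $S^{(0)}$ lies in $\BQ_p$, so $\BQ(S^{(0)}) \subseteq \BQ_p$ and $G_\CC$ is a quotient of $\gal(\BQ_p/\BQ) \cong \UZ{p}$.

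For $k \in \UZ{p}$, let $\sigma_k$ be the automorphism with $\sigma_k(\zeta_p) = \zeta_p^k$. A direct computation gives
\[
\sigma_k([2j+1]_{q^l}) = [2j+1]_{q^{kl}}, \qquad \chi_{V_{2m}}(V_{2j}) = \frac{[(2j+1)(2m+1)]_{q^l}}{[2m+1]_{q^l}} = [2j+1]_{q^{l(2m+1)}}.
\]
Hence $\hat\sigma_k(V_0) = V_{2m}$ if and only if $k \equiv \pm(2m+1) \pmod{p}$. Since the set $\{\pm(2m+1) \bmod p : 0 \le m \le (p-3)/2\}$ exhausts $\UZ{p}$, for each $m$ the choice $k = 2m+1 \in \UZ{p}$ produces $\hat\sigma_k(V_0) = V_{2m}$, so $G_\CC$ acts transitively on $\irr(\CC)$.

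The only genuine bookkeeping is the identity $[(2j+1)(2m+1)]_{q^l} = [2j+1]_{q^{l(2m+1)}} \cdot [2m+1]_{q^l}$, which is immediate from the definition of the quantum integer, together with the observation that $\{1, 3, \ldots, p-2\}$ and their negatives mod $p$ cover $\UZ{p}$. As a consistency check, the kernel of $k \mapsto \hat\sigma_k$ is $\{\pm 1\}$, so $|G_\CC| = (p-1)/2 = |\irr(\CC)|$, in agreement with Proposition~\ref{p:sizeofGalois}.
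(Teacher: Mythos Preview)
Your proof is correct and follows essentially the same approach as the paper: primality via Lemma~\ref{lem:gen}, and transitivity by explicitly computing that the Galois automorphism $\sigma_k$ with $\sigma_k(\zeta_p)=\zeta_p^k$ sends $V_0$ to $V_{2m}$ when $k\equiv \pm(2m+1)\pmod p$. The paper phrases the same computation in terms of $q$ rather than $\zeta_p$ (using $\BQ_{2p}=\BQ_p$ for odd $p$) and omits your auxiliary checks that $S^{(0)}$ has entries in $\BQ_p$ and that $|G_\CC|=(p-1)/2$, but the substance is identical.
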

\begin{proof}
By Lemma~\ref{lem:gen}, $\BB_{p-2,l}$ is prime. Therefore, it suffices to show that $\BB_{p-2,l}$ is transitive. 

The underlying root of unity  $q^l$ is the primitive $2p$-th root of unity. Since $p$ is odd, $\BQ_p = \BQ(q^l)= \BQ(q^{2l})$, it suffices to show that $\gal(\BQ_p/\BQ)$ acts transitively on $\irr(\AA^{(0)}_{p-2,l})$. 

For any nonnegative integer  $m \le \frac{p-3}{2}$, $\gcd(2m+1, 2p) = 1$. So there exists $\s \in  \gal(\BQ_p/\BQ)$ such that $\s(q) = q^{2m+1}$. Thus, we have
$$
\s\left(\frac{S^{(0)}_{j,0}}{S^{(0)}_{0,0}}\right) 
= 
\s([(2j+1)]_{q^l}) 
= 
\frac{[(2j+1)(2m+1)]_{q^l}}{ [2m+1]_{q^l}} 
= 
\frac{S^{(0)}_{j, m}}{S^{(0)}_{0,m}}
$$
for any nonnegative integer $j \le \frac{p-3}{2}$. Therefore, $\hs(V_0)=V_{2m}$ and hence $\gal(\BQ_p/\BQ)$ acts transitively on $\irr(\AA^{(0)}_{p-2,l})$.
\end{proof}

\begin{prop}\label{p:prod_A}
Let $p_1, \dots, p_\ell > 3$ be distinct primes.  For any $(l_1, \dots, l_\ell) \in \UZ{2p_1}\times \cdots \times \UZ{2p_\ell}$,
the Deligne product 
$$
\CC = \BB_{p_1 -2, l_1} \bot \cdots \bot \BB_{p_\ell -2, l_\ell}
$$
is a transitive modular category.
\end{prop}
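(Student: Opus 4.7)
The plan is to induct on $\ell$, using Corollary \ref{cor:coprime} as the main engine. The base case $\ell = 1$ is exactly Proposition \ref{p:B-trans-prime}. For the inductive step, set
\[
\DD = \BB_{p_1-2, l_1} \bot \cdots \bot \BB_{p_{\ell-1}-2, l_{\ell-1}},
\]
which is transitive by the induction hypothesis, and write $\CC = \DD \bot \BB_{p_\ell-2, l_\ell}$. By Corollary \ref{cor:coprime}, it suffices to show that
\[
\BQ(\dim(\DD)) \cap \BQ(\dim(\BB_{p_\ell-2, l_\ell})) = \BQ.
\]

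The key computation is to locate $\BQ(S)$ for each factor inside a cyclotomic field of prime conductor. For any odd prime $p$ and $l \in \UZ{2p}$, the underlying root of unity $q^l$ of $\BB_{p-2,l}$ is a primitive $2p$-th root of unity, so $\BQ(q^l) = \BQ_{2p} = \BQ_p$ (since $p$ is odd). From the explicit formula \eqref{eq:so3-st} for $S^{(0)}$, every entry lies in $\BQ(q^l) = \BQ_p$, whence $\BQ(S_{\BB_{p-2,l}}) \subset \BQ_p$. Since $\BB_{p-2,l}$ is transitive (Proposition \ref{p:B-trans-prime}), Theorem \ref{thm:1}(iii) gives $\BQ(\dim(\BB_{p-2,l})) = \BQ(S_{\BB_{p-2,l}}) \subset \BQ_p$.

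For the Deligne product $\DD$, the S-matrix is the Kronecker product $S_\DD = S_{\BB_{p_1-2,l_1}} \otimes \cdots \otimes S_{\BB_{p_{\ell-1}-2,l_{\ell-1}}}$, so its entries generate the compositum of the individual S-fields:
\[
\BQ(S_\DD) = \BQ(S_{\BB_{p_1-2,l_1}}) \cdots \BQ(S_{\BB_{p_{\ell-1}-2,l_{\ell-1}}}) \subset \BQ_{p_1} \cdots \BQ_{p_{\ell-1}} = \BQ_{p_1 \cdots p_{\ell-1}}.
\]
Again by Theorem \ref{thm:1}(iii), $\BQ(\dim(\DD)) \subset \BQ_{p_1 \cdots p_{\ell-1}}$. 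Since $\gcd(p_1\cdots p_{\ell-1}, p_\ell) = 1$, the standard intersection formula for cyclotomic fields yields $\BQ_{p_1 \cdots p_{\ell-1}} \cap \BQ_{p_\ell} = \BQ$. Thus
\[
\BQ(\dim(\DD)) \cap \BQ(\dim(\BB_{p_\ell-2,l_\ell})) \subset \BQ_{p_1 \cdots p_{\ell-1}} \cap \BQ_{p_\ell} = \BQ,
\]
and Corollary \ref{cor:coprime} gives the transitivity of $\CC$.

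The only nonroutine step is the containment $\BQ(S_{\BB_{p-2,l}}) \subset \BQ_p$, which is immediate from the explicit S-entries in \eqref{eq:so3-st}; everything else reduces to coprimality of cyclotomic fields at distinct prime conductors and the already-established Corollary \ref{cor:coprime}. No genuine obstacle is expected.
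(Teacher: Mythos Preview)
Your proof is correct and follows essentially the same route as the paper: induction on $\ell$ with base case Proposition \ref{p:B-trans-prime}, and the inductive step handled by Corollary \ref{cor:coprime} together with the observation that the global dimensions of the two factors lie in cyclotomic fields $\BQ_{p_1\cdots p_{\ell-1}}$ and $\BQ_{p_\ell}$ with trivial intersection. The only difference is cosmetic: the paper simply notes $\dim(\DD)=\prod_a \dim(\BB_{p_a-2,l_a})\in\BQ_{p_1\cdots p_{\ell-1}}$ directly, whereas you route this through $\BQ(S)$ and Theorem \ref{thm:1}(iii), which is valid but not needed.
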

\begin{proof}
We proceed to prove the statement by induction on $\ell$.  The statement obviously holds for $\ell=1$ by Proposition \ref{p:B-trans-prime}. Now we assume  $p_1, \dots, p_{\ell} > 3$ are distinct primes and $(l_1, \dots, l_{\ell}) \in \UZ{2p_1}\times \cdots \times \UZ{2p_{\ell}}$ for some integer $\ell >1$.   By the induction assumption,   $\CC=\boxtimes_{a = 1}^{\ell-1}\BB_{p_{a}-2, l_{a}}$ is a transitive modular category. Note that
$$
\dim(\CC) = \prod_{a = 1}^{\ell-1}\dim(\BB_{p_{a}-2, l_{a}}) \in \BQ_{p'_\ell} \quad\text{and}\quad
\dim(\BB_{p_{\ell}-2, l_{\ell}}) \in \BQ_{p_{\ell}},
$$
where $p'_\ell = p_1 \cdots p_{\ell-1}$. Since $\BQ_{p'_\ell} \cap \BQ_{p_{\ell}} = \BQ$, it follows from Corollary \ref{cor:coprime} and Proposition \ref{p:B-trans-prime} that $\CC \bot \BB_{p_{\ell}-2, l_{\ell}}$ is transitive. 
\end{proof}

\section{Representations of \texorpdfstring{$\SLZ$}{} associated with modular categories} \label{subsec:mod-group-rep}
In this section, we show that the representations of $\SLZ$ associated with transitive modular categories are irreducible and minimal. As a consequence, the order of the T-matrix of any nontrivial transitive modular category is square-free and its prime factors are greater than 3.

Let $\CC$ be a modular category with modular data $(S,T)$. We denote by $\GL_\CC(\BC)$ the group of all invertible matrices over $\BC$ indexed by $\irr(\CC)$, and $V_\CC=K_0(\CC)\o_\BZ \BC$ with the  standard basis $E_\CC=\{e_X\mid X \in \irr(\CC)\}$. Note that $S,T \in \GL_\CC(\BC)$, and the group $\GL_\CC(\BC)$ acts on $V_\CC$ via the standard basis $E_\CC$, namely $A(e_Y) = \sum_{X \in \irr(\CC)} A_{XY} e_X$ for any $A \in \GL_\CC(\BC)$ and $Y \in \irr(\CC)$.
We often identify $\GL(V_\CC)$ with $\GL_\CC(\BC)$ in this manner.

Recall that 
$\fs := 
\begin{pmatrix}
0 & -1\\ 1 & 0
\end{pmatrix}$ 
and 
$\ft := 
\begin{pmatrix}
1 & 1 \\ 0 & 1
\end{pmatrix}$ 
are generators of the group $\SL_2(\BZ)$, subjected to the relations $\fs^4 = \id$ and $(\fs\ft)^3 = \fs^2$, and the assignment
\begin{equation}
    \bar{\rho}_{\CC}: \SL_2(\BZ) \to \PGL(V_\CC),
    \quad
    \fs \mapsto S,
    \quad
    \ft \mapsto T
\end{equation}
defines a group homomorphism (cf.~\cite{BakalovKirillov, TuBook}). This projective representation $\ol\rho_\CC$ can  be lifted to an ordinary representation  $(\rho, V_\CC)$ such that the diagram 
$$
\xymatrix{\SL_2(\BZ)\ar[rd]_-{\ol\rho_\CC} \ar[r]^-{\rho} & \GL(V_\CC) \ar[d] \\
& \PGL(V_\CC) 
}
$$
commutes, where the vertical map $\GL(V_\CC) \to \PGL(V_\CC)$ is the natural surjection. 
Any lifting $(\rho, V_\CC)$ of $\ol\rho_\CC$, called a \emph{representation of $\SLZ$ associated with $\CC$},  yields an action of $\SLZ$ on $V_\CC$ given by 
$$
\fa \cdot e_Y :=  \rho(\fa) (e_{Y}) =\sum_{X \in \irr(\CC)} \rho(\fa)_{X,Y} (e_X) 
$$
for any $\fa \in \SLZ$. We call $V_\CC$ an \emph{$\SLZ$-module of $\CC$} throughout this paper. If $(\rho, V_\CC)$ is a representation of $\SLZ$ associated with $\CC$, then the pair $(s,t):=(\rho(\fs), \rho(\ft))$, called the \emph{normalized modular data}, uniquely  determines $\rho$, and the matrices $s, t$ are unitary and symmetric (cf.~\cite{ENO}). Moreover, the group of 1-dimensional representations of $\SLZ$ acts transitively on representations of $\SLZ$ associated with $\CC$ by tensor product (cf.~\cite{DongLinNg}).

For any positive integer $m$, we denote by $\pi_m: \SLZ \to \SL_2(\Zn{m})$ the natural surjection. We say that a representation $\phi: \SLZ\to \GL_r(\BC)$ is of level $m$ if $\phi = \tilde\phi\circ \pi_m$ for some representation $\tilde\phi: \SL_2(\Zn{m})\to \GL_r(\BC)$ and $m=\ord(\phi(\ft))$.  By \cite[Thm.\ II]{DongLinNg}, if  $\rho$ is a representation of  $\SLZ$ associated with $\CC$, then $\rho$ is of level $n= \ord(\rho(\ft))$ and $\rho(\fa)_{X,Y} \in \BQ_n$ for any $\fa \in \SLZ$ and $X,Y \in \irr(\CC)$. In particular, $s, t$ are matrices defined over $\BQ_n$. Thus, for  $\s \in \gal(\BQ_n/\BQ)$, $(^\s\!\rho, V_\CC)$ is also a representation of $\SLZ$ where $^\s\!\rho(\fa) = \s(\rho(\fa))$ for any $\fa \in \SLZ$, and the corresponding $\s$-twisted $\SLZ$-action on $V_\CC$ is denoted by
\begin{equation}\label{eq:twisted-action}
    ^\s\!\fa \cdot v = {^\s\!\rho}(\fa)(v)  
\end{equation}
for any $v \in V_\CC$.

Let $(\rho, V_{\CC})$ be a level $n$ representation of $\SLZ$ associated with a modular category $\CC$. The action of the Galois group $\gal(\BQ_n/\BQ)$ on the normalized modular data $(s,t)$ satisfies some interesting conditions as follows: for  $\s \in \gal(\BQ_n/\BQ)$, 
there exists a sign function $\e_\s: \irr(\CC) \to \{\pm 1\}$ such that 
\begin{equation}\label{eq:the-sgn}
\s(s_{X,Y}) = \e_\s(X) s_{\hs(X), Y} = \e_\s(Y) s_{X, \hs(Y)}
\end{equation}
for any $X,Y \in \irr(\CC)$(cf.~\cite{CosteGannon, dBG}), and 
\begin{equation}\label{eq:gal_t}
\s^2(t_{X,X}) = t_{\hs(X), \hs(X)}
\end{equation}
for any $X \in \irr(\CC)$ (cf.~\cite[Thm.\ II (iii)]{DongLinNg}). Moreover, the absolute Galois group $\GQ$  acts on the normalized modular data via the restriction 
$$\res^{\ol\BQ }_{\BQ_n}:\GQ \to \gal(\BQ_n/\BQ)\,.$$ 

The condition of the action of $\GQ$ on $s$ defines a $\GQ$-action on $V_\CC$.  Let  $g_\s \in \GL(V_\CC)$ be defined by 
\begin{equation*}
(g_{\s})_{X,Y} := \e_\s(X)\, \delta_{\hs(X), Y}\,.
\end{equation*}
Then, \eqref{eq:the-sgn} and \eqref{eq:gal_t} can be rewritten as
\begin{equation}\label{eq:Gs}
\s(s) = g_{\s} s = s  g_\s^{-1}, \quad \s^2(t) = g_\s t g_\s^{-1}\,,
\end{equation}
and the assignment
\begin{equation}\label{eq:phi-rho}
\phi_\rho: \GQ \to \GL(V_\CC), \quad \s \mapsto g_{\s},
\end{equation}
defines a group homomorphism (cf.~\cite{CosteGannon}). Therefore, for any $\s\in \GQ$, we have
\begin{equation}\label{eq:conjuate-rep}
^{\s^2}\!\!\!\rho(\fa) = g_\s \rho(\fa) g_\s^{-1} \quad \text{for all }\fa \in \SLZ\,.
\end{equation}
In particular,  $(\rho, V_\CC) \cong (^{\s^2}\!\!\!\rho, V_\CC)$ as  representations of $\SLZ$. 

Now, $\GQ$ acts on $V_\CC$ via the representation $(\phi_\rho, V_\CC)$ of $\GQ$, namely
\begin{equation} \label{eq:GQ-action}
    \s \cdot e_X := g_\s(e_X) =  \e_\s(X) \ e_{\hs(X)}
\end{equation}
for any $\s \in \GQ$ and $X \in \irr(\CC)$. Thus, in view of \cite[Thm.~II (iii)]{DongLinNg} or \eqref{eq:conjuate-rep}, for any $\fa \in \SLZ$, $v \in V_\CC$ and $\s \in \GQ$, we have
\begin{equation} \label{eq:GQ-SL-relation}
 \s \cdot (\fa \cdot v)  = g_\s \rho(\fa) (v) =  g_\s \rho(\fa) g_\s^{-1} g_\s (v) = {^{\s^2}\!\!\fa} \cdot (\s \cdot v)\,.
\end{equation}
By \cite[Thm.~II (iv)]{DongLinNg}, if $\s(\zeta_n) = \zeta_n^a$ for some integer $a$ coprime to $n$, then 
\begin{equation} \label{eq: g_sigma}
g_\s = \rho(\ft^a \fs \ft^b \fs \ft^a \fs^{-1})\,, 
\end{equation}
where $b$ is an inverse of $a$ modulo $n$. Therefore, the $\GQ$-action on $V_\CC$ is uniquely determined by $\rho$, and every $\SLZ$-submodule of $V_\CC$ also inherits the action of $\GQ$. 

\subsection{Minimal representations of \texorpdfstring{$\SLZ$}{}}
To proceed, we set up the following conventions. We will denote by $\spec{M}$ the set of the eigenvalues of an linear operator $M$ on a finite-dimensional complex vector space. For any finite multiplicative abelian group $A$, 
$$A^2 := \{a^2 \mid a \in A\}$$
is a subgroup of $A$ of order $|A|/|\Omega_2(A)|$, where $\Omega_2(A)$ is the elementary 2-subgroup of $A$. In particular, for any positive integer $m$, $\Omega_2(\gal(\BQ_m/\BQ))$ is simply denoted by $\Omega_2^m$ and we define
$$
\varphi_2(m) := \left| ((\Zn{m})^\times)^2\right| = \left| \gal(\BQ_m/\BQ)^2\right|\,.
$$
It is immediately seen that $\varphi_2$ is a multiplicative function. Moreover, for any prime $p$, we have
\begin{equation}\label{eq:phi2}
\varphi_2(p^m) = \left\{\begin{array}{ll}
\frac{1}{2} (p-1)p^{m-1} & \text{ if  $p$ is odd}; \\
2^{m-3} & \text{ if  $p=2$ and $m \ge 3$};\\
1 & \text{ if  $p=2$ and $m =1,2$}.\\
\end{array}\right.
\end{equation}

Suppose $(s,t)$ is a normalized modular data of a modular category $\CC$.  By \eqref{eq:gal_t}, the assignment $(\s, \zeta)\mapsto \s^2(\zeta)$ defines a $\GQ$-action on $\spec{t}$, and the $\GQ$-orbit of $t_{X,X}$, for any $X \in \irr(\CC)$, is then given by
$$
\GQ \cdot t_{X,X} =\{\s^2(t_{X,X})\mid \s \in  \gal(\BQ_m/\BQ)\}
$$
where $m = \ord(t_{X,X})$. In particular, $|\GQ \cdot t_{X,X}|=\varphi_2(m)$. We denote by $\spec{t}/\GQ$ the set of $\GQ$-orbits of $\spec{t}$.

\begin{lem}
Let $(\rho, V_\CC)$ be a representation of $\SLZ$ associated with a modular category $\CC$. If $(\rho|_W, W)$ is a subrepresentation of $(\rho, V_\CC)$, then $\spec{\rho(\ft)|_W}$ is closed under the action of $\GQ$ on $\spec{\rho(\ft)}$. In particular, 
$$
\spec{\rho(\ft)|_W}/\GQ \subseteq \spec{\rho(\ft)}/\GQ,
$$
and every direct sum decomposition of $(\rho, V_\CC)$ as representations of $\SLZ$ determines   a partition of $\spec{\rho(\ft)}/\GQ$.
\end{lem}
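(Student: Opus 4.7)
The plan is to leverage the key fact that the Galois twist operators $g_\s$ lie in the image $\rho(\SLZ)$. Indeed, by the explicit formula \eqref{eq: g_sigma}, $g_\s = \rho(\ft^a \fs \ft^b \fs \ft^a \fs^{-1})$, where $a$ represents $\s$ on $\zeta_n$ and $b$ is its inverse modulo $n$. Since $W$ is an $\SLZ$-submodule of $V_\CC$, each $g_\s$ (and its inverse) preserves $W$.

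To show the closure of $\spec{\rho(\ft)|_W}$ under the $\GQ$-action, I would start with any $\zeta \in \spec{\rho(\ft)|_W}$ and a nonzero eigenvector $v \in W$ with $\rho(\ft) v = \zeta v$. The aim is to show that $g_\s^{-1} v$ is a nonzero $\s^2(\zeta)$-eigenvector of $\rho(\ft)$ in $W$. Using the conjugation identity from \eqref{eq:Gs}, rewritten as $\rho(\ft) g_\s^{-1} = g_\s^{-1} \s^2(\rho(\ft))$, this reduces to the computation $\s^2(\rho(\ft)) v = \s^2(\zeta) v$, which is immediate from the fact that $\rho(\ft)$ is the diagonal matrix $t$ with entries $\theta_X$ and that $v$ is supported on indices $X$ with $\theta_X = \zeta$. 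Hence $\s^2(\zeta) \in \spec{\rho(\ft)|_W}$ for every $\s \in \GQ$, establishing the $\GQ$-stability. The inclusion $\spec{\rho(\ft)|_W}/\GQ \subseteq \spec{\rho(\ft)}/\GQ$ is then immediate.

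For the last assertion, any direct sum decomposition $V_\CC = \bigoplus_i W_i$ into $\SLZ$-submodules yields the covering $\spec{\rho(\ft)} = \bigcup_i \spec{\rho(\ft)|_{W_i}}$ by $\GQ$-stable subsets (each being $\GQ$-stable by the preceding step), which induces the asserted partition on the orbit space $\spec{\rho(\ft)}/\GQ$.

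There is no real obstacle here; the argument is essentially a direct application of the Galois formalism already developed in the section, with \eqref{eq: g_sigma} providing the crucial point that $g_\s$ belongs to $\rho(\SLZ)$ and therefore preserves $\SLZ$-submodules. The only care required is bookkeeping the exponent on $\s$ (whether one lands on $\s^2(\zeta)$ or $\s^{-2}(\zeta)$), but since $\s$ ranges over all of $\GQ$ this makes no difference to the conclusion.
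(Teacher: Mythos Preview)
Your proposal is correct and follows essentially the same route as the paper: both arguments use \eqref{eq: g_sigma} to conclude that $g_\s^{-1}$ preserves $W$, and then apply the conjugation relation $\s^2(t) = g_\s t g_\s^{-1}$ (equivalently, \eqref{eq:GQ-SL-relation}) to see that $g_\s^{-1}v$ is a $\s^2(\zeta)$-eigenvector whenever $v$ is a $\zeta$-eigenvector supported on the appropriate basis elements. One small cosmetic point: the diagonal entries of $\rho(\ft)=t$ are $t_{X,X}$ rather than the twists $\theta_X$ themselves (they differ by a global scalar), but this does not affect your argument.
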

\begin{proof}
 For any $\zeta \in \spec{\rho(\ft)}$,  $B_\zeta = \{e_X \mid \ft \cdot e_X =\zeta e_X\}$  is a basis for the corresponding eigenspace of $\rho(\ft)$. Let $\zeta \in \spec{\rho(\ft)|_W}$ and $w \in W\setminus\{0\}$ such that $\ft \cdot w=\zeta w$. Then 
 $w$ is a $\BC$-linear combination of $B_\zeta$. Thus,  for any $\s \in \GQ$, we have $^{\s^2}\!\ft \cdot w = \s^2(\zeta) w$ and $\s^{-1}\cdot w \in W$ by \eqref{eq: g_sigma}. It follows from \eqref{eq:GQ-SL-relation} that
 $$
 \ft \cdot (\s^{-1} \cdot w) = \s^{-1} \cdot({^{\s^2}\!\ft}\cdot w) = \s^2(\zeta)\  \s^{-1} \cdot w,
 $$
 and so $\s^2(\zeta) \in \spec{\rho(\ft)|_W}$.
\end{proof}

The minimal possible dimension of 
an 
$\SLZ$-submodule of $V_\CC$ of the preceding proposition inspires the following definition. 

\begin{defn}
A level $m$ representation $(\phi, W)$ of $\SLZ$  is called \emph{minimal}  if $\dim(W) = \varphi_2(m)$ and $$
\spec{\phi(\ft)}=\{\s^2(\zeta_m^l)\mid \s \in \gal(\BQ_m/\BQ)\}
$$
for some $l \in\UZ{m}$. In this case, $(\phi, W)$  or the corresponding $\SLZ$-module is said to be \emph{minimal of type $l$}.
\end{defn}

\begin{cor}\label{c:minimal_is_irr}
Let $(\rho, V_\CC)$ be a representation of $\SLZ$ associated with a modular category $\CC$. If $(\rho|_W, W)$ is a minimal subrepresentation of $(\rho, V_\CC)$, then $(\rho|_W, W)$ is irreducible. 
\end{cor}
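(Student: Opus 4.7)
The plan is to derive a contradiction from the assumption that $(\rho|_W, W)$ admits a nonzero proper subrepresentation, using the preceding lemma together with a dimension/orbit count on $\spec{\rho(\ft)|_W}$.

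First I would record the key combinatorial fact: if $W$ is minimal of level $m$ and type $l$, then by definition $\spec{\rho(\ft)|_W}=\{\s^2(\zeta_m^l)\mid \s\in\gal(\BQ_m/\BQ)\}$, which is precisely one $\GQ$-orbit under the squaring action on $\spec{\rho(\ft)|_W}$. Since $l\in\UZ{m}$, the map $\gal(\BQ_m/\BQ)^2\to\spec{\rho(\ft)|_W}$ sending $\s^2\mapsto \s^2(\zeta_m^l)$ is injective (two automorphisms $\s^2,\tau^2$ with $\s^2(\zeta_m^l)=\tau^2(\zeta_m^l)$ must agree on $\zeta_m$ because $l$ is a unit mod $m$), so this orbit has exactly $\varphi_2(m)$ elements. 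Combined with $\dim W=\varphi_2(m)$, this forces $\rho(\ft)|_W$ to have $\varphi_2(m)$ distinct eigenvalues, each of multiplicity one.

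Next I would suppose, for contradiction, that $W'\subsetneq W$ is a nonzero $\SLZ$-subrepresentation. Applying the preceding lemma to the chain $W'\subseteq W\subseteq V_\CC$, the set $\spec{\rho(\ft)|_{W'}}$ is a nonempty subset of $\spec{\rho(\ft)|_W}$ that is closed under the $\GQ$-action. But $\spec{\rho(\ft)|_W}$ is a single $\GQ$-orbit, so its only $\GQ$-stable subsets are $\emptyset$ and itself; hence $\spec{\rho(\ft)|_{W'}}=\spec{\rho(\ft)|_W}$. Since each eigenvalue of $\rho(\ft)|_W$ occurs with multiplicity one, the inclusion $W'\subseteq W$ yields $\dim W'\ge \varphi_2(m)=\dim W$, so $W'=W$, a contradiction. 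This completes the argument.

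I do not anticipate a real obstacle here; the statement is essentially a bookkeeping consequence of the preceding lemma together with the definition of minimality. The only subtle point worth stating carefully is the verification that the $\GQ$-orbit of $\zeta_m^l$ under the squaring action truly has size $\varphi_2(m)$ (this is where the hypothesis $l\in\UZ{m}$ from the definition of minimality is used), since without this one cannot conclude that each eigenvalue of $\rho(\ft)|_W$ is simple.
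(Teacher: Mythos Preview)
Your proposal is correct and follows essentially the same route as the paper's proof: both use the preceding lemma to conclude that any nonzero subrepresentation $W'\subseteq W$ must have $\spec{\rho(\ft)|_{W'}}$ equal to the full single $\GQ$-orbit $\spec{\rho(\ft)|_W}$, and then compare $\dim W'$ with $\varphi_2(m)$. Your version is slightly more explicit in verifying that $|\spec{\rho(\ft)|_W}|=\varphi_2(m)$ (hence that each eigenvalue is simple), a point the paper's proof uses implicitly when writing $\dim(U)\ge |\spec{\rho(\ft)|_W}|=\varphi_2(m)$.
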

\begin{proof}
Since $(\rho, V_\CC)$ is of some level $n=\ord(t)$,  $\ker (\rho|_W)$ is  a congruence subgroup of $\SLZ$. Let $m$ be the level of $(\rho|_W, W)$. Since $(\rho|_W, W)$ is minimal, $\dim(W) = \varphi_2(m)$ and 
$$
\spec{\rho(\ft)|_W}=\{\s^2(\zeta_m^l)\mid \s \in \gal(\BQ_m/\BQ)\} \quad\text{for some }l \in\UZ{m}\,.
$$
In particular, $\GQ$ acts transitively on $\spec{\rho(\ft)|_W}$. If $(\rho|_U, U)$ is a nontrivial subrepresentation of $(\rho|_W, W)$ and $\zeta \in \spec{\rho(\ft)|_U}$, then the $\GQ$-orbit of $\zeta$ is $\spec{\rho(\ft)|_W}$. Therefore, 
$$
\dim(U) \ge |\spec{\rho(\ft)|_W}| = \varphi_2(m)=\dim(W)\,.
$$
Therefore, $U = W$ and hence $(\rho|_W, W)$ is irreducible. 
\end{proof}

The following examples are building blocks of all the minimal irreducible representations of $\SLZ$.

\begin{example}\label{rmk:eta-12}
For any odd prime $p$, there are precisely two inequivalent irreducible representations of $\SLZ$ of level $p$ and dimension $\varphi_2(p)=(p-1)/2$, denoted by $(\eta^p_j, \BC^{\varphi_2(p)})$ or simply $\eta^p_j$  ($j=\pm 1$), which can be described as follows (see, for example,  \cite[Sec.~4]{eh93}). Let  $a \in (\Zn{p})^\times$, and  set $j=\jacobi{a}{p}$, the Legendre symbol of $a$ modulo $p$. For any integers $x,y \in [1, (p-1)/2]$,  
\begin{equation}\label{eq:eta}
\eta^p_j(\fs)_{x, y} = \frac{2i\,j}{\sqrt{p^*}}\sin\left(\frac{4\pi\, a xy}{p}\right)\,
\text{ and }\,
\eta^p_j(\ft)_{x, y} = \delta_{x,y}\,\exp\left(\frac{2\pi i\, a x^2}{p}\right)\,
\end{equation}
where
$$
\sqrt{p^*} = \left\{\begin{array}{cl}
\sqrt{p} &\text{if }\ p \equiv 1 \pmod 4\,,\\
-i\sqrt{p} &\text{if }\ p \equiv 3 \pmod 4\,.
\end{array}\right.
$$
The representation type of $\eta^p_j$ is independent of the choice of $a$ with $\jacobi{a}{p}=j$. The standard basis for $\BC^{\varphi_2(p)}$ is an eigenbasis of $\eta^p_j(\ft)$ and the representation $\eta^p_j$ is uniquely determined by $\spec{\eta^p_j(\ft)}$, which is either $\{\s^2(\zeta_p)\mid \s \in \gal(\BQ_p/\BQ)\}$ or  $\{\s^2(\zeta^a_p)\mid \s \in \gal(\BQ_p/\BQ)\}$ where $a$ is quadratic nonresidue modulo $p$. In particular, $\eta^p_{\pm 1}$ are level $p$ minimal representations of $\SLZ$.
\end{example}

\begin{example}\label{rmk:chi}
The isomorphism classes of 1-dimensional representations of $\SLZ$ form a cyclic group of order 12 under tensor product, and they are completely determined by the images of $\ft$. If $x$ is a 12-th root of unity,  we denote by $\chi_x$ the 1-dimensional representation of $\SLZ$ such that $\chi_x(\ft)=x$. In particular, $\chi^{\pm 1}_{\zeta_3}=\chi_{\zeta^{\pm 1}_3} = \eta^3_{\pm 1}$, and the level of $\chi_x$ is the order of $x$. Since $\ord(x) \mid 12$ and $\varphi_2(d)=1$ for any positive integer $d \mid 12$, every 1-dimensional representation of $\SLZ$ is minimal.
\end{example}

We close this subsection with the following characterization of minimal irreducible representations of $\SL_2(\BZ)$ which extends the preceding examples 
to 
a general setting. 

\begin{lem} \label{l:unique_minimal}
Let   $(\phi, V)$ be a level $n$ irreducible representation of $\SLZ$.  If $(\phi, V)$ is  minimal of type $l$, 
then $n=d\cdot  p_1 \cdots p_\ell$ for some positive integer $d \mid 12$ and distinct primes $p_1, \dots, p_\ell \ge 5$. In this case, there exist unique $l_0 \in (\Zn{d})^\times$ and $l_i \in (\Zn{p_i})^\times$ such that 
$\zeta_n^l = \zeta_d^{l_0} \zeta_{p_1}^{l_1}\cdots \zeta_{p_\ell}^{l_\ell}$ and
$$
\phi \cong \chi_{x}\ \o\, \eta_{j_1}^{p_1} \o \cdots \o\, \eta_{j_\ell}^{p_\ell},
$$
where $x=\zeta^{l_0}_{d}$ and $j_i = \jacobi{l_i}{p_i}$. In particular, $\phi$ is uniquely determined by $\zeta_n^l$ up to equivalence.
\end{lem}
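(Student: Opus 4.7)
The plan is to decompose $\phi$ via the Chinese remainder theorem into prime-power level factors, to show that each factor must itself be minimal, and then to classify these factors using Examples \ref{rmk:eta-12} and \ref{rmk:chi}.

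\textbf{Tensor decomposition and factorwise minimality.} By \cite[Thm.~II]{DongLinNg}, $\phi$ factors through $\SL_2(\Zn{n})$. Writing $n = \prod_p p^{a_p}$ and applying the CRT isomorphism $\SL_2(\Zn{n}) \cong \prod_p \SL_2(\Zn{p^{a_p}})$, irreducibility forces $\phi \cong \bigotimes_p \phi_p$, with each $\phi_p$ an irreducible representation of $\SL_2(\Zn{p^{a_p}})$ of exact level $p^{a_p}$. Via CRT, write $\zeta_n^l = \prod_p \zeta_{p^{a_p}}^{l_p}$ with $l_p \in (\Zn{p^{a_p}})^\times$. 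Since $\phi(\ft) = \bigotimes_p \phi_p(\ft)$, its spectrum equals the pointwise product $\prod_p \spec{\phi_p(\ft)}$, while the $\gal(\BQ_n/\BQ)^2$-orbit of $\zeta_n^l$ decomposes as the product of the $\gal(\BQ_{p^{a_p}}/\BQ)^2$-orbits of the $\zeta_{p^{a_p}}^{l_p}$, of total size $\prod_p \varphi_2(p^{a_p}) = \varphi_2(n)$ by multiplicativity. Minimality of $\phi$ therefore forces each eigenvalue of each $\phi_p(\ft)$ to have multiplicity one, $\dim \phi_p = \varphi_2(p^{a_p})$, and $\spec{\phi_p(\ft)}$ to equal the full $\gal(\BQ_{p^{a_p}}/\BQ)^2$-orbit of $\zeta_{p^{a_p}}^{l_p}$; in other words, each $\phi_p$ is itself minimal of type $l_p$.

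\textbf{Classification of the permitted factors.} When $p \geq 5$ and $a_p = 1$, $\phi_p$ is an irreducible level-$p$ representation of $\SLZ$ of dimension $(p-1)/2$, so Example \ref{rmk:eta-12} gives $\phi_p \cong \eta^p_{j}$ with $j = \jacobi{l_p}{p}$, the spectrum distinguishing between $\eta^p_{+1}$ and $\eta^p_{-1}$. When $p \in \{2,3\}$ and $p^{a_p} \mid 12$, the equality $\varphi_2(p^{a_p}) = 1$ forces $\phi_p$ to be one-dimensional, and Example \ref{rmk:chi} then yields $\phi_p \cong \chi_{\zeta_{p^{a_p}}^{l_p}}$. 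Collecting the factors for $p \in \{2,3\}$ into a single character $\chi_x$ of level $d \mid 12$, where $d$ is the $\{2,3\}$-part of $n$ and $x = \zeta_d^{l_0}$ with $l_0$ obtained from the $(l_p)_{p \in \{2,3\}}$ by CRT, produces the claimed tensor product form. Uniqueness of $l_0$, of each $l_i$, and hence of $x$ and each $j_i$, follows from the uniqueness of the CRT decomposition of $l$ together with the spectral characterizations in the two examples.

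\textbf{Main obstacle: excluding the remaining prime powers.} The genuinely nontrivial input is the exclusion of a minimal irreducible level-$p^{a_p}$ representation of $\SLZ$ whenever $p^{a_p}$ is not in the permitted families, namely $p^{a_p} \in \{8,16,32,\dots\} \cup \{9,27,81,\dots\} \cup \{p^k : p \geq 5,\ k \geq 2\}$. The case $p^{a_p} = 8$ is disposed of immediately: $\varphi_2(8) = 1$ would force a one-dimensional representation of $\SLZ$ of level exactly $8$, but every one-dimensional representation of $\SLZ$ has level dividing $12$ by Example \ref{rmk:chi}. For the remaining prime powers the argument requires a closer analysis of the irreducible representations of $\SL_2(\Zn{p^k})$ of exact level $p^k$ (along the lines of Nobs--Wolfart and their successors): the key point is that any such irreducible representation has a $\ft$-spectrum that either contains eigenvalues of order properly dividing $p^k$ — coming from the natural filtration $p\Zn{p^k} \subset \Zn{p^k}$, so that the spectrum cannot form a single $\gal(\BQ_{p^k}/\BQ)^2$-orbit of a primitive $p^k$-th root of unity — or else has dimension strictly greater than $\varphi_2(p^k)$. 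This spectral-versus-dimension dichotomy is where the bulk of the technical work of the proof is expected to lie.
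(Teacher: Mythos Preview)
Your overall architecture matches the paper's: decompose via the Chinese remainder theorem into prime-power level factors, show each factor is again minimal, and then identify the factors via Examples \ref{rmk:eta-12} and \ref{rmk:chi}. The paper does this one prime at a time (splitting off $p^m$ and inducting on the cofactor), but that difference is cosmetic.

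The genuine gap is exactly where you flag it. You describe a proposed ``spectral-versus-dimension dichotomy'' for ruling out higher prime powers and then write that this ``is where the bulk of the technical work of the proof is expected to lie.'' In other words, you have not carried out the exclusion step, only outlined a strategy for it. As written, the lemma is not proved.

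Moreover, the strategy you sketch is more elaborate than what is actually needed. The paper does not analyze the $\ft$-spectrum of level-$p^m$ irreducibles at all. It simply invokes the classification of irreducible representations of $\SL_2(\Zn{p^m})$ due to Nobs and Nobs--Wolfart (see also the tables in Eholzer) as a black box and checks a single numerical fact: no level-$p^m$ irreducible has dimension exactly $\varphi_2(p^m)$ unless $m=1$ for odd $p$, or $m\in\{1,2\}$ for $p=2$. Once this dimensional check is done, the minimal factor $\phi_p$ is forced to be one of the $\eta^p_{\pm 1}$ (for $p>3$) or a $1$-dimensional character (for $p\le 3$), and the rest follows. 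So the missing ingredient is not a new spectral argument but a direct appeal to the existing classification tables; replacing your final paragraph by that citation and dimension check would complete the proof along the paper's lines.
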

\begin{proof}
Let $p$ be a prime factor of $n$, and $m$ a positive integer such that $n=p^m \cdot n_2$, where $n_2$ is a positive integer not divisible by $p$. Set $n_1 =p^m$.
By the Chinese Remainder Theorem, there exist irreducible representations $\phi_i: \SLZ \to \GL(V_i)$ of level $n_i$  such that 
$$
\phi \cong  \phi_1 \o \phi_2\,.
$$
Therefore, for any $\w\in \spec{\phi(\ft)}$, 
$$
\w = \w_1 \cdot \w_2
$$
where $\w_i \in \spec{\phi_i(\ft)}$. Since $(\phi, V)$ is minimal of type $l$, $\w=\s^2(\zeta_n^l)$ for some $\s \in \gal(\BQ_n/\BQ)$, which means it is a primitive $n$-th root of unity. Thus, $\w_i$ is primitive $n_i$-th root for $i=1,2$. Note that the group $\mu_n$ of $n$-th roots of unity is an internal direct product of $\mu_{n_1}$ and $\mu_{n_2}$, the pair $(w_1, \w_2)$ is uniquely determined by $\w$. More precisely, there exists a unique $l_i \in (\Zn{n_i})^\times$ such that  $l = l_i n/n_i$ in $\Zn{n_i}$. Then
$$
\zeta_n^l = \zeta_{n_1}^{l_1}\cdot \zeta_{n_2}^{l_2}
$$
and
$$
\w_i = \s^2\left(\zeta_{n_i}^{l_i}\right)
$$
for $i=1, 2$.  As $\s$ runs through $\gal(\BQ_n/\BQ)$, we find
$$
\{\s^2\left(\zeta^{l_i}_{n_i}\right)\big|\ \s \in \gal(\BQ_{n_i}/\BQ)\} 
$$
is a subset of $\spec{\phi_i(\ft)}$. Therefore, $\dim(V_i) \ge \varphi_2(n_i)$ and so
$$
\varphi_2(n) \ge \dim(V_1)\cdot \dim(V_2) \ge \varphi_2(n_1) \cdot \varphi_2(n_2)  = \varphi_2(n)\,.
$$
This implies $\dim(V_i) = \varphi_2(n_i)$ and  $$\spec{\phi_i(\ft)}=\{\s^2\left(\zeta^{l_i}_{n_i}\right)\big|\ \s \in \gal(\BQ_{n_i}/\BQ)\}\,.
$$
Thus, both $\phi_1$ and $\phi_2$ are minimal of type $l_1$ and $l_2$ respectively.

The level $p^m$ irreducible representations of $\SLZ$ were classified by \cite{Nobs1, NW76} (see also \cite[Tbl.~1 - 8]{Eh}). Since $\phi_1$ is an irreducible representation of level $p^m$ and dimension $\varphi_2(p^m)$, whose values are given by \eqref{eq:phi2}, we find
$$
m=\left\{\begin{array}{ll}
1  & \text{if  $p$ is odd}; \\
1 \text{ or } 2 & \text{if  $p=2$}.
\end{array}\right.
$$
In this case, $\phi_1 \cong \eta^p_{\pm 1}$ 
if  
$p >3$ (cf.~Remark \ref{rmk:eta-12}) and $\phi_1$ is 1-dimensional if $p \le 3$. Since $p$ can be any prime factor of $n$, we obtain the factorization $n=d\cdot  p_1 \cdots p_\ell$ for some positive integer $d \mid 12$ and $p_1$, ..., 
$p_{\ell}$ are distinct primes greater than 3.

If one denotes the preceding irreducible representation $\phi_1$ by $\phi^p$, then, by induction, we have
$$
\phi \cong \phi^d \ \ \o \ \bigotimes_{\substack{\text{prime }p > 3 \\ p \mid n}} \phi^p,
\quad
\text{where} \quad
\phi^d = \bigotimes_{\substack{\text{prime }p \le 3 \\ p \mid n}} \phi^p
$$
is 1-dimensional. There exist a unique integer $l_p\pmod{p}$ satisfying $l \equiv l_p n/p \pmod{p}$ for each odd prime divisor $p$ of $n$, and a unique $l_0 \in (\Zn{d})^\times$ satisfying $l \equiv l_0 n/d \pmod{d}$. Then, we have
$$
\zeta_n^l = \zeta_d^{l_{0}} \prod_{\substack{\text{prime }p > 3\\ p \mid n}}\zeta_p^{l_p} \quad\text{and}\quad \zeta_d^{l_0} = \phi^d(\ft)\,.
$$
Therefore, $\phi^d = \chi_{\zeta_d^{l_0}}$ and $\phi^p=\eta^p_{j_p}$, where $j_p = \jacobi{l_p}{p}$ (cf. Examples \ref{rmk:eta-12} and \ref{rmk:chi}). Consequently,
\begin{equation*}
\phi \cong \chi_{\zeta_d^{l_0}}\, \o \bigotimes_{\substack{\text{prime }p > 3 \\ p \mid n}} \eta_{j_p}^p\,. 
\qedhere
\end{equation*}
\end{proof}

\subsection{Characteristic 2-group of modular categories}
Let $\CC$ be a modular category with the modular data $(S,T)$. For any normalized modular data $(s,t)$ of $\CC$,  $\BQ(S) \subset \BQ_N \subseteq \BQ_n$, where $N=\ord(T)$ and $n =\ord(t)$ (cf.~\cite{NS10, DongLinNg}). The restriction of the Galois automorphisms of $\BQ_n$ to $\BQ(S)$ defines an epimorphism $\res^{\BQ_n}_{\BQ(S)}: \gal(\BQ_n/\BQ) \to G_\CC$ of groups. Note that 
by \cite[Prop.~6.7]{DongLinNg}, we have
\begin{equation}\label{eq:ker_res}
   \ker(\res^{\BQ_n}_{\BQ(S)})=\gal(\BQ_n/\BQ(S)) \subseteq \O2\,.
\end{equation}

\begin{defn}\label{def:HC}
Let $(s,t)$ be a normalized modular data of a modular category $\CC$, and $n=\ord(t)$.  The image of the elementary 2-subgroup $\O2$ of $\gal(\BQ_n/\BQ)$ under the  restriction map $\res^{\BQ_n}_{\BQ(S)}: \gal(\BQ_n/\BQ) \to G_\CC$ is  called the \emph{characteristic 2-group} of $\CC$, and denoted by $H_\CC$.
\end{defn}

In view of \eqref{eq:ker_res}, we have the exact sequence of abelian groups:
\begin{equation}\label{eq:exact_HC}
   1 \rightarrow \gal(\BQ_n/\BQ(S)) \xrightarrow{incl} \O2 \xrightarrow{\res_{\BQ(S)}^{\BQ_n}} H_\CC \rightarrow 1\,.
\end{equation}

\begin{prop}\label{l:phi2}
The characteristic 2-group $H_\CC$ of $\CC$ is independent of the choice of  the normalized modular data $(s,t)$ of $\CC$. Moreover, if $n=\ord(t)$, then
$$
G_\CC/H_\CC\cong  \frac{\gal(\BQ_n/\BQ)}{\O2}\,.
$$
In particular,
$|G_\CC|/ |H_\CC|= \varphi_2(n)$.
\end{prop}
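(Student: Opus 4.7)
The plan is to derive parts (ii) and (iii) as immediate consequences of the exact sequence \eqref{eq:exact_HC} and standard abelian-group theory, and then to establish (i) by identifying $H_\CC$ with $\gal(\BQ(S)/E_n)$ for the cyclotomic subfield $E_n := \BQ_n^{\Omega_2^n}$ and showing that $E_n$ does not depend on the choice of normalized modular data.

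For (ii), since \eqref{eq:ker_res} provides the chain of normal subgroups $\gal(\BQ_n/\BQ(S)) \subseteq \Omega_2^n \subseteq \gal(\BQ_n/\BQ)$, the third isomorphism theorem gives
\[
\frac{G_\CC}{H_\CC} \cong \frac{\gal(\BQ_n/\BQ)/\gal(\BQ_n/\BQ(S))}{\Omega_2^n/\gal(\BQ_n/\BQ(S))} \cong \frac{\gal(\BQ_n/\BQ)}{\Omega_2^n},
\]
after identifying the two subquotients in the middle with $G_\CC$ and $H_\CC$ via \eqref{eq:exact_HC}. For (iii), the squaring homomorphism on any finite abelian group $A$ induces an isomorphism $A/\Omega_2(A) \cong A^2$; applied to $A = \gal(\BQ_n/\BQ) \cong (\Zn n)^\times$ this yields $|G_\CC|/|H_\CC| = \varphi_2(n)$.

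For (i), Galois correspondence applied to \eqref{eq:ker_res} gives $E_n \subseteq \BQ(S)$ and hence $H_\CC = \gal(\BQ(S)/E_n)$, so it suffices to prove $E_n = E_{n'}$ for any two orders $n, n'$ of normalized modular data of $\CC$, where $E_m := \BQ_m^{\Omega_2^m}$. By the Chinese remainder theorem, $E_n$ is the compositum of the local pieces $E_{p^{v_p(n)}}$ over primes $p$ dividing $n$, so the problem reduces to prime-by-prime agreement. Since any two normalized modular data differ by tensoring with a 1-dimensional character $\chi$ of $\SLZ$, we have $t' = \chi(\ft) t$ with $\chi(\ft)$ a 12th root of unity. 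A direct calculation then shows: for each prime $p > 3$ one has $v_p(n') = v_p(n)$ and identical local parts; for $p = 3$, either $v_3(n') = v_3(n) \ge 2$ giving local parts $\BQ_{3^{v_3(n)}}^+$, or both $v_3(n), v_3(n') \le 1$ giving the trivial local part $\BQ$; and for $p = 2$, either $v_2(n') = v_2(n) \ge 3$ giving identical local parts, or both $v_2(n), v_2(n') \le 2$ giving the trivial local part $\BQ$.

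The main obstacle is the prime-by-prime computation at $p \in \{2, 3\}$, since for these primes $\chi(\ft)$ can contribute a nontrivial $p$-adic factor. The key step is: if $t_{X,X}$ has $p$-adic order $p^k$ with $k$ strictly greater than $j := v_p(\ord \chi(\ft))$, then $\chi(\ft) t_{X,X}$ still has $p$-adic order exactly $p^k$, because writing the $p$-primary part of the product as $\zeta_{p^k}^{a + b \cdot p^{k-j}}$ with $a$ coprime to $p$, the exponent remains coprime to $p$ (since $b \cdot p^{k-j}$ is divisible by $p$). This preserves the top $p$-adic valuation present in $t$, and the obvious upper bound $v_p(n') \le \max(j, v_p(n))$ yields the reverse direction.
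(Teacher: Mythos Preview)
Your argument is correct. Parts (ii) and (iii) match the paper exactly (third isomorphism theorem applied to the chain $\gal(\BQ_n/\BQ(S))\subseteq\Omega_2^n\subseteq\gal(\BQ_n/\BQ)$). For part (i), however, you take a genuinely different route.

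You pass to the fixed field $E_n=\BQ_n^{\Omega_2^n}$, identify $H_\CC=\gal(\BQ(S)/E_n)$, and then verify $E_n=E_{n'}$ by a prime-by-prime comparison of the local factors $E_{p^{v_p(n)}}$, handling $p>3$, $p=3$, and $p=2$ separately. The paper instead works upward: setting $l=\lcm(n,n')$ so that $\BQ_l=\BQ_n(x)=\BQ_{n'}(x)$ with $x=\chi(\ft)$, it observes that any $\sigma\in\Omega_2^n$ extends to some $\tau\in\gal(\BQ_l/\BQ)$, and since every element of $(\BZ/12\BZ)^\times$ squares to $1$ one has $\tau^2(x)=x$, hence $\tau^2=\id$ on $\BQ_l$ and $\tau\in\Omega_2^l$. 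Thus $\res^{\BQ_l}_{\BQ_n}(\Omega_2^l)=\Omega_2^n$ and likewise for $n'$, so both $\Omega_2^n$ and $\Omega_2^{n'}$ have the same image in $G_\CC$.

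The paper's argument is shorter and uniform: the single fact that $(\BZ/12\BZ)^\times$ is an elementary $2$-group replaces your three cases. Your approach, on the other hand, makes the invariant field $E_n$ explicit and shows concretely which cyclotomic piece $H_\CC$ cuts out inside $\BQ(S)$---information the paper's proof does not surface.
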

\begin{proof}
Let $(s,t)$ and $(s',t')$ be normalized modular data of $\CC$ and let $(\rho, V_\CC)$ and $(\rho', V_\CC)$ be the corresponding representations of $\SLZ$ associated with $\CC$ respectively. Then $\rho' \cong \chi \o \rho$ for some 1-dimensional character of $\SLZ$. Since $\chi^{12}=1$, $t' = x t$ for some 12-th root 
of unity $x$. 
Let $m = \ord(t')$, and $l = \lcm(m,n)$. Then $\BQ_l=\BQ_m(x) = \BQ_n(x)$. By definition, 
$\res^{\BQ_l}_{\BQ_n} (\Omega_2^{l}) \subseteq \O2$. For any $\s \in \O2$, there exists 
an extension $\tau \in \gal(\BQ_{l}/\BQ)$ such that $\tau|_{\BQ_n} = \s$. Since $x^{12}=1$, ${\tau}^2(x)=x$. Thus, $\tau^2 = \id$ and hence $\tau \in \Omega_2^{l}$. Therefore, 
$$
\res^{\BQ_l}_{\BQ_n} (\Omega_2^{l}) = \O2\,.
$$
By the same argument, we also have 
$$
\res^{\BQ_l}_{\BQ_m} (\Omega_2^{l}) = \Omega_2^{m}\,.
$$
Since the diagram
$$
\xymatrix{
\gal(\BQ_l/\BQ) \ar[rr]^-{\res_{\BQ_m}^{\BQ_{l}}} \ar[rrd]^-{\res_{\BQ(S)}^{\BQ_l}} \ar[d]_-{\res_{\BQ_n}^{\BQ_{l}}}
&& \gal(\BQ_m/\BQ) \ar[d]^-{\res_{\BQ(S)}^{\BQ_{m}}}\\
\gal(\BQ_n/\BQ) \ar[rr]_-{\res_{\BQ(S)}^{\BQ_{n}}} && G_\CC
}\,
$$
of restriction maps is commutative, we have
$$
\res_{\BQ(S)}^{\BQ_m} (\Omega_2^m) = \res_{\BQ(S)}^{\BQ_l} (\Omega_2^{l})= \res_{\BQ(S)}^{\BQ_n}(\O2)\,. 
$$
This proves the first assertion of the statement.

By \eqref{eq:exact_HC}, we also have the following commutative diagram of abelian groups with exact rows:
$$
\xymatrix{
1 \ar[r] & \gal(\BQ_n/\BQ(S)) \ar[r]^-{incl} \ar[d]_-{\id} & \O2  \ar[r]^-{\res^{\BQ_n}_{\BQ(S)}} \ar[d]_-{incl}  & H_\CC  \ar[r] \ar[d]^-{incl} & 1\\
1 \ar[r] & \gal(\BQ_n/\BQ(S)) \ar[r]^-{incl}  & \gal(\BQ_n/\BQ) \ar[r]^-{\res^{\BQ_n}_{\BQ(S)}} & G_\CC  \ar[r] & 1\,.\\
}
$$
Therefore, 
\begin{equation*}
G_\CC/H_\CC \cong \frac{\gal(\BQ_n/\BQ)/\gal(\BQ_n/\BQ(S))}{\O2/\gal(\BQ_n/\BQ(S))} \cong  \frac{\gal(\BQ_n/\BQ)}{\O2}\,. 
\qedhere
\end{equation*}
\end{proof}

\begin{cor}\label{c:HC}
Let $\CC$ be a modular category with the modular data $(S,T)$. If $N= \ord(T)$ is not a multiple of $4$, then 
$$
H_\CC = \res_{\BQ(S)}^{\BQ_N} (\Omega_2^N)\,. 
$$
\end{cor}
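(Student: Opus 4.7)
The plan is to deduce the corollary from Proposition \ref{l:phi2} by comparing the images of $\Omega_2^n$ and $\Omega_2^N$ in $G_\CC$ under the restriction maps $\res^{\BQ_n}_{\BQ(S)}$ and $\res^{\BQ_N}_{\BQ(S)}$ respectively, using a common cyclotomic overfield and the fact that $\gal(\BQ_{12}/\BQ)=\Omega_2^{12}$ has exponent $2$.

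Concretely, I would fix any normalized modular data $(s,t)$ of $\CC$, set $n=\ord(t)$, and recall from the congruence subgroup property that $N\mid n$, which places everything inside the tower $\BQ(S)\subseteq \BQ_N\subseteq \BQ_n$. Set $l=\lcm(n,12)$, so that $\BQ_l=\BQ_n\cdot\BQ_{12}=\BQ_N\cdot\BQ_{12}\cdot\BQ_n$ contains $\BQ_n,\BQ_N$ and $\BQ_{12}$. The argument in the proof of Proposition \ref{l:phi2} already establishes $\res^{\BQ_l}_{\BQ_n}(\Omega_2^l)=\Omega_2^n$; I would run the analogous argument in parallel for $N$ in place of $n$ to obtain $\res^{\BQ_l}_{\BQ_N}(\Omega_2^l)=\Omega_2^N$. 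The inclusion $\subseteq$ is automatic, and for $\supseteq$, given $\sigma\in\Omega_2^N$, one lifts to $\tau\in\gal(\BQ_l/\BQ)$ by prescribing a compatible value on $\zeta_{12}$; since every element of $\gal(\BQ_{12}/\BQ)$ already squares to the identity, any such extension automatically lies in $\Omega_2^l$.

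With both surjections in hand, I would chase the commutative square of restriction maps
\begin{equation*}
\xymatrix{
\Omega_2^l\ar@{->>}[r]\ar@{->>}[d] & \Omega_2^n\ar[d]^-{\res^{\BQ_n}_{\BQ(S)}}\\
\Omega_2^N\ar[r]_-{\res^{\BQ_N}_{\BQ(S)}} & G_\CC
}
\end{equation*}
to conclude that the images of $\Omega_2^n$ and $\Omega_2^N$ in $G_\CC$ coincide, both being equal to the image of $\Omega_2^l$. By Definition \ref{def:HC} and the invariance established in Proposition \ref{l:phi2}, the left-hand composition is $H_\CC$, giving $H_\CC=\res^{\BQ_N}_{\BQ(S)}(\Omega_2^N)$.

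The main obstacle is the compatibility step in producing the lift $\tau\in\Omega_2^l$ of $\sigma\in\Omega_2^N$, and this is exactly where the hypothesis $4\nmid N$ enters. The extension $\tau$ must restrict on $\BQ_N\cap\BQ_{12}$ to $\sigma|_{\BQ_N\cap\BQ_{12}}$, and when $4\nmid N$, we have $\gcd(N,12)\in\{1,2,3,6\}$, so $\BQ_N\cap\BQ_{12}\subseteq\BQ_6$, and the restriction $(\BZ/12\BZ)^\times\to(\BZ/\gcd(N,12)\BZ)^\times$ is surjective onto images coming from $\Omega_2^N$, guaranteeing a compatible square-to-identity extension. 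When $4\mid N$, the intersection $\BQ_N\cap\BQ_{12}$ contains $\BQ(i)$, and the interplay between the image of $\sigma|_{\BQ_4}$ and the square-to-identity constraint on $\beta\in\gal(\BQ_{12}/\BQ)$ is the delicate point one needs to rule out. I expect the bulk of the technical work to be concentrated here; the rest of the proof is a formal diagram chase after this obstruction is removed.
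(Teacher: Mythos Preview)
Your approach differs substantially from the paper's, which is a one-line proof: when $4\nmid N$, \cite[Lem.~2.2]{DongLinNg} guarantees a lifting $\rho$ with $\ord(\rho(\ft))=N$, so one may take $n=N$ in Definition~\ref{def:HC} and read off $H_\CC=\res^{\BQ_N}_{\BQ(S)}(\Omega_2^N)$ directly. No overfield or lifting argument is needed.

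Your route has a genuine gap. The step ``given $\sigma\in\Omega_2^N$, lift to $\tau\in\gal(\BQ_l/\BQ)$ by prescribing a compatible value on $\zeta_{12}$'' presupposes that an automorphism of $\BQ_l$ is determined by its restrictions to $\BQ_N$ and $\BQ_{12}$, i.e., that $\BQ_l=\BQ_N\cdot\BQ_{12}$. But you set $l=\lcm(n,12)$, so $\BQ_l=\BQ_n\cdot\BQ_{12}$, and you have not shown $\BQ_n\subseteq\BQ_N\cdot\BQ_{12}$. A priori one only knows $N\mid n\mid 12N$, and $\BQ_{12N}$ can properly contain $\BQ_{\lcm(N,12)}$ when $\gcd(N,12)>1$. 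Your ``main obstacle'' paragraph analyzes the fiber product over $\BQ_N\cap\BQ_{12}$, which is exactly the right analysis for lifting to $\BQ_{\lcm(N,12)}$, not to your $\BQ_l$. (In fact $\BQ_l=\BQ_{\lcm(N,12)}$ \emph{does} hold here, but only because a level-$N$ lifting exists---the very fact the paper invokes---so establishing it would make the rest of your argument redundant.)

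If you want a self-contained argument avoiding the citation, drop $\BQ_l$ entirely and prove directly that $\res^{\BQ_n}_{\BQ_N}:\Omega_2^n\to\Omega_2^N$ is surjective whenever $N\mid n$ and $4\nmid N$. By the Chinese Remainder Theorem this reduces to prime powers: for odd $p$ both sides are $\{\pm 1\}$ and $-1\mapsto -1$; for $p=2$ the hypothesis $4\nmid N$ forces the $2$-part of $(\BZ/N\BZ)^\times$ to be trivial. Then
\[
H_\CC=\res^{\BQ_n}_{\BQ(S)}(\Omega_2^n)=\res^{\BQ_N}_{\BQ(S)}\bigl(\res^{\BQ_n}_{\BQ_N}(\Omega_2^n)\bigr)=\res^{\BQ_N}_{\BQ(S)}(\Omega_2^N),
\]
which is cleaner than the overfield detour and pinpoints exactly where $4\nmid N$ enters.
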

\begin{proof}
Since $4 \nmid N$, by \cite[Lem. 2.2]{DongLinNg}, there exists a level $N$ representation $(\rho, V_\CC)$ of $\SLZ$ associated with $\CC$. Therefore, $\rho(\ft)=t$ has order $N$. Now, the result follows directly from Definition \ref{def:HC}  of $H_\CC$.
\end{proof}

\begin{example}  Let $A$ be a finite abelian group  and $q: A \to \BC^\times$ a nondegenerate quadratic form. The pointed modular category $\CC = \CC(A, q)$ has the S- and T-matrices given by
$$
S_{a,b} = \frac{q(a)q(b)}{q(ab)}, \quad T_{a,b} = \delta_{a,b} q(a)
$$
for any $a, b \in A$. 
\begin{enumerate}
    \item[(i)] If $|A|$ is odd, then $\BQ(S) = \BQ(T)=\BQ_N$, where $N =\ord(T)$. Since $|A|$ is odd, and so 
    is 
    $N$. Therefore, by Corollary \ref{c:HC}, $H_\CC = \ON$ is nontrivial.
    \item[(ii)] If $A=\langle a \rangle$ is a cyclic group of order 2 and and $q(a)=\pm i$, then $\CC$ is called a \emph{semion category}. In this case, $\ord(T)=4$ and $\BQ(S)=\BQ$. Therefore, $H_\CC$ is trivial. 
\end{enumerate}
\end{example}

Let  $(\rho, V_\CC)$ be a level $n$ representation of  $\SLZ$ associated with $\CC$, and $(s,t)$ the corresponding normalized modular data. Since $\O2 \xrightarrow{\res} H_\CC$ is an epimorphism of elementary 2-groups, there exists a subgroup $\tH\subset \O2$ such that  
\begin{equation}\label{eq:HC-tHC}
\res^{\BQ_n}_{\BQ(S)}: \tH \xrightarrow{\sim} H_\CC
\end{equation}
is an isomorphism. Now, recall that the $\GQ$-action  on $V_\CC$ via $\phi_\rho$  factors through $\gal(\BQ_n/\BQ)$ (cf.~\eqref{eq:phi-rho}). 
Therefore, $\tH$ acts on $V_\CC$ in the same way (cf.~\eqref{eq:GQ-action}), namely
$$
\s \cdot e_X = g_\s(e_X) = \e_\s(X) e_{\hs(X)}
$$
for any $\s \in \tH$.
One can decompose 
$V_{\CC}$ as an $\tH$-module
into its isotypic components
$$
V_\CC = \bigoplus_{\chi\in \irr(\tH)} V_\CC^\chi\,,
$$
where $\irr(\tH)$ 
denotes
the set of irreducible characters of $\tH$, and $V_\CC^\chi$ the isotypic component 
of 
$V_\CC$ corresponding to the irreducible character $\chi$ of $\tH$.

\begin{prop} \label{p:HC-decomp} 
Let $\CC$ be a modular category and $(\rho, V_\CC)$ a representation of $\SLZ$  associated with $\CC$. Then for any $\chi \in \irr(\tH)$, the isotypic component $V_\CC^\chi$ is an $\SL_2(\BZ)$-submodules of $V_\CC$, and 
\begin{equation}\label{eq:HC-decomp}
    V_\CC = \bigoplus_{\chi\in \irr(\tH)} V_\CC^\chi
\end{equation}
is a decomposition of $\SL_2(\BZ)$-modules.
Moreover, if there exists a simple object $X \in \irr(\CC)$ such that $\stab_{H_{\CC}}(X) = \{\id\}$, then all the $V_\CC^\chi$'s are non-zero and pairwise inequivalent.
\end{prop}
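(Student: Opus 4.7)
The plan is to split the proof into two parts: first establish the $\SLZ$-decomposition, then verify the consequences of the stabilizer hypothesis.

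For the decomposition, the key point is that the $\tH$-action on $V_\CC$ commutes with the $\SLZ$-action. For any $\s \in \tH \subseteq \O2$ we have $\s^2 = \id$, so relation \eqref{eq:GQ-SL-relation} collapses to $\s \cdot (\fa \cdot v) = \fa \cdot (\s \cdot v)$ for every $\fa \in \SLZ$ and $v \in V_\CC$. Since $\tH$ is a finite abelian group acting on the finite-dimensional space $V_\CC$, $V_\CC$ decomposes as the direct sum of its $\chi$-isotypic components (which are just $\chi$-eigenspaces), and the commutativity just established forces each $V_\CC^\chi$ to be stable under $\SLZ$. This produces \eqref{eq:HC-decomp} as a decomposition of $\SLZ$-modules.

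For the non-vanishing assertion, assume $\stab_{H_{\CC}}(X) = \{\id\}$ for some $X \in \irr(\CC)$, and put $W := \Span\{\s \cdot e_X : \s \in \tH\} \subseteq V_\CC$. Since $\res$ induces $\tH \xrightarrow{\sim} H_\CC$ and the $H_\CC$-action on the orbit of $X$ is free, the $|\tH|$ vectors $\s \cdot e_X = \e_\s(X) e_{\hs(X)}$ are signed distinct basis vectors of $V_\CC$, hence linearly independent. Because $\phi_\rho$ is a group homomorphism, setting $v_\s := \s \cdot e_X$ we obtain $\tau \cdot v_\s = v_{\tau \s}$, identifying $W$ with the regular representation of $\tH$. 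Since each irreducible character of $\tH$ occurs with multiplicity one in the regular representation, $W \cap V_\CC^\chi \neq 0$, and hence $V_\CC^\chi \neq 0$ for every $\chi \in \irr(\tH)$.

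For pairwise inequivalence of the $V_\CC^\chi$ as $\SLZ$-modules, the decisive input is \eqref{eq: g_sigma}, which realizes each $g_\s$ as $\rho$ applied to an explicit word in $\fs$ and $\ft$. Therefore any $\SLZ$-morphism $\phi : V_\CC^{\chi_1} \to V_\CC^{\chi_2}$ automatically commutes with $g_\s$ for every $\s \in \tH$. On $V_\CC^{\chi_i}$ the operator $g_\s$ acts as the scalar $\chi_i(\s)$, so $\chi_1(\s)\phi = \chi_2(\s)\phi$ for all $\s \in \tH$; if $\chi_1 \neq \chi_2$, this forces $\phi = 0$ and yields the inequivalence.

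I expect the main conceptual obstacle to be this last step, since without the explicit realization \eqref{eq: g_sigma} of $g_\s$ inside the image of $\rho$ one would only know that the $\tH$-action is a symmetry commuting with $\SLZ$, which in general does not separate isotypic components as $\SLZ$-modules. The decomposition and non-vanishing steps are largely formal once one recognizes that stabilizer-freeness turns the $\tH$-orbit of $e_X$ into a copy of the regular representation.
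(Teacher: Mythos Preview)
Your proof is correct and follows essentially the same approach as the paper: the $\SLZ$-stability of each $V_\CC^\chi$ comes from \eqref{eq:GQ-SL-relation} with $\s^2=\id$, the inequivalence from realizing $g_\s$ as $\rho(\fa)$ via \eqref{eq: g_sigma}, and the non-vanishing from the fact that the $\tH$-orbit of $e_X$ consists of $|\tH|$ linearly independent vectors. The paper phrases the non-vanishing step through the projection idempotents $P_\chi=\frac{1}{|\tH|}\sum_{\s}\chi(\s)g_\s$ and the linear independence of $\{g_\s\}$, whereas you identify the span of the orbit with the regular representation directly; these are the same argument in slightly different clothing.
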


\begin{proof} By \eqref{eq:GQ-SL-relation}, for any $v\in V_\CC^\chi$, $\s \in  \tH$, and $\fa\in \SL_2(\BZ)$, 
$$
\s\cdot (\fa \cdot v) =\ {^{\s^2}}\!\fa\cdot  (\s\cdot v) = \chi(\s)\ \fa\cdot v\,.
$$ 
Therefore, $V_\CC^\chi$ is an $\SLZ$-invariant subspace of $(\rho, V_{\CC})$, and the  $\SL_2(\BZ)$-module decomposition \eqref{eq:HC-decomp} follows immediately. 

Let $\chi, \chi'$ be distinct irreducible characters of $\tH$ such that $V_\CC^\chi \ne 0$ and $V_\CC^{\chi'}\ne 0$. Then, there exists $\s \in \tH$ such that $\chi(\s) \ne \chi'(\s)$. By \eqref{eq: g_sigma}, $g_\s = \rho(\fa)$ for some $\fa \in \SL_2(\BZ)$, and the restrictions of $\rho(\fa)$ on $V_\CC^\chi$ and $V_\CC^{\chi'}$ are the distinct scalars $\chi(\s)$ and $\chi'(\s)$ respectively. Therefore, $V_\CC^\chi$ and $V_\CC^{\chi'}$ are inequivalent representations of $\SL_2(\BZ)$. 

For each $\chi \in \irr(\tH)$, 
$$
P_\chi:=\frac{1}{|\tH|} \sum_{\s \in \tH} \chi(\s) g_\s
$$
is an idempotent operator on $V_\CC$ commuting with the action $\SLZ$ such that $V_\CC^\chi = P_\chi (V_\CC)$. Therefore, $V_\CC^\chi=0$ if and only if $P_\chi =0$. If $\{g_\s\mid \s \in \tH\}$ is $\BC$-linearly independent, then $P_\chi \ne 0$ and hence $V_\CC^\chi \ne 0$ for all $\chi \in \irr(\tH)$.

Let $X \in \irr(\CC)$ be such that $\stab_{H_{\CC}}(X) = \{\id\}$. Suppose $\sum_{\s \in \tH} \a_\s g_\s = 0$ for some $\a_\s \in \BC$. Then
	$$
	\sum_{\s \in \tH} \a_\s g_\s(e_X) = \sum_{\s \in \tH} \a_\s \e_\s(X) e_{\hs X} = 0\,.
	$$
	Since $\stab_{H_{\CC}}(X) = \{\id\}$, $\{e_{\hs X} \mid \hs \in H_\CC\}$ is a set of distinct basis elements of $V_{\CC}$ and hence $\a_\s = 0$ for all $\s \in \tH$. Therefore, $\{g_\s\mid\s \in \tH\}$ is $\BC$-linearly independent, and so $V_\CC^\chi \ne 0$ for all $\chi \in \tH$.	 This completes the proof of the proposition. 
\end{proof}

\begin{prop} 
Let  $(\rho, V_\CC)$ be a level $n$ representation of $\SLZ$ associated with a modular category $\CC$. If $\rho$ is irreducible, then $H_\CC$ is trivial, the S-matrix of $\CC$ is real, and  $\CC$ is self-dual.  Moreover, there exists $X \in \irr(\CC)$ such that $\rho(\ft)_{X,X}$ is a primitive  $n$-th root of unity.
\end{prop}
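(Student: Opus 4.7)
The plan is to invoke Proposition~\ref{p:HC-decomp} to collapse $\tH$, use the triviality of $H_\CC$ to get reality of $S$ and self-duality, and then reduce the primitive-root claim to a prime-power calculation via the Chinese Remainder Theorem.

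First I would apply Proposition~\ref{p:HC-decomp}: since $(\rho,V_\CC)$ is irreducible, exactly one isotypic summand in $V_\CC = \bigoplus_{\chi \in \irr(\tH)} V_\CC^\chi$ is nonzero, so $V_\CC = V_\CC^{\chi_0}$ for a single character $\chi_0$ of $\tH$. Hence each $g_\s$ with $\s \in \tH$ acts as the scalar $\chi_0(\s)$ on $V_\CC$. But the formula $g_\s(e_X) = \e_\s(X)\,e_{\hs(X)}$ exhibits $g_\s$ as a signed permutation of the basis $\{e_X\}$, so the scalar action forces $\hs(X) = X$ for every $X \in \irr(\CC)$ (otherwise the relation $\e_\s(X)\,e_{\hs(X)} = \chi_0(\s) e_X$ would contradict the linear independence of distinct basis vectors). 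By injectivity of $G_\CC \hookrightarrow \Sym(\irr(\CC))$, $\res^{\BQ_n}_{\BQ(S)}(\s)$ is trivial in $G_\CC$, and since $\tH \xrightarrow{\sim} H_\CC$ is an isomorphism, $\s$ is trivial in $\tH$. Thus $\tH$, and hence $H_\CC$, is trivial. Via the exact sequence \eqref{eq:exact_HC} this yields $\O2 \subseteq \gal(\BQ_n/\BQ(S))$; complex conjugation lies in $\O2$, so it fixes $\BQ(S)$, showing $\BQ(S) \subseteq \mathbb{R}$ and that $S$ is real. Consequently $s = S/\sqrt{\dim(\CC)}$ is real, unitary, and symmetric, so $s^2 = ss^* = I$. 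Combining with the standard identity $s^2 = C$ (the charge conjugation matrix) gives $C = I$, i.e., $\CC$ is self-dual.

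For the final claim I would factor $n = p_1^{a_1}\cdots p_k^{a_k}$ into prime powers and use the CRT splitting $\SL_2(\Zn{n}) \cong \prod_i \SL_2(\Zn{p_i^{a_i}})$, through which $\rho$ factors since it has level $n$. Irreducibility of $\rho$ then produces a tensor decomposition $\rho \cong \bigotimes_i \rho_i$ with each $\rho_i$ irreducible of level dividing $p_i^{a_i}$. Because the orders of the $\rho_i(\ft)$ are pairwise coprime and their product equals $\ord(\rho(\ft)) = n$, each $\rho_i$ has exact level $p_i^{a_i}$. In each prime-power factor, the eigenvalues of $\rho_i(\ft)$ are $p_i^{a_i}$-th roots of unity whose orders have least common multiple $p_i^{a_i}$, so at least one such eigenvalue has order exactly $p_i^{a_i}$. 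Tensoring such eigenvectors across all $i$ yields an eigenvalue of $\rho(\ft)$ equal to a product of primitive $p_i^{a_i}$-th roots, which by coprimality is a primitive $n$-th root of unity; since $\rho(\ft)$ is diagonal on $\{e_X\}_{X \in \irr(\CC)}$, this value appears as some diagonal entry $\rho(\ft)_{X,X}$.

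The main obstacle I anticipate is the CRT tensor decomposition step: one must justify carefully that irreducibility descends to each prime-power factor and that each $\rho_i$ attains the full prime-power level $p_i^{a_i}$ rather than a proper divisor. Once this is in place, the remaining conclusions---triviality of $H_\CC$, reality of $S$, self-duality via $s^2 = C$, and the prime-power eigenvalue count---are direct unwindings of results already stated in the paper.
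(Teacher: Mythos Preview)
Your proof is correct and follows essentially the same route as the paper's: collapse $H_\CC$ via the $\SLZ$-invariant decomposition of Proposition~\ref{p:HC-decomp}, deduce reality of $S$ from complex conjugation lying in $\O2$, and obtain the primitive-root eigenvalue by the CRT tensor factorization of $\rho$. The only cosmetic difference is that the paper derives self-duality directly from the fact that complex conjugation acts on $\irr(\CC)$ as $X \mapsto X^*$ (so $H_\CC$ trivial forces $X^* = X$), whereas you pass through $s^2 = I = C$; both are one-line deductions.
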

\begin{proof}
If $\rho$ is an irreducible representation of $\SL_2(\BZ)$, the decomposition \eqref{eq:HC-decomp} of $\rho$, determined by the characteristic 2-subgroup $H_\CC$, must be trivial. Suppose there exists a nontrivial  element  $\hs$ in $H_\CC$. Then $\hs(X) \ne X$ for some $X \in \irr(\CC)$ and so  the eigenspaces $E_\pm$ of $g_\s$ corresponding to the eigenvalues $\pm 1$ are nontrivial. Note that both $E_+$ and $E_-$  are stable under the $\SLZ$ action, and  $V_\CC = E_+ \oplus E_-$. This contradicts the irreducibility of $V_\CC$. Therefore, $H_\CC$ is trivial.

Let $\s \in \gal(\BQ_n/\BQ)$ denote the complex conjugation. Then $\hs(X)=X^*$ for $X \in \irr(\CC)$. Since $H_\CC$ is trivial, $\s|_{\BQ(S)} =\id$ and so $X^*= \hs(X)=X$ for $X \in \irr(\CC)$. Therefore, $S_\CC$ is real  and $\CC$ is self-dual.

Let $n=p_1^{n_1}\cdots p_\ell^{n_\ell}$ be the prime factorization of $n$, where $p_1, \dots, p_\ell$ are distinct prime factors of $n$.  Since $\rho$ is irreducible, by the Chinese Remainder Theorem, 
there exists a  level $p_i^{n_i}$ irreducible representation $(\rho_i,V_i)$ of $\SLZ$ for each $i=1,\dots, \ell$  such that
$$
(\rho, V_\CC) \cong  (\rho_1, V_1) \o \cdots \o (\rho_\ell, V_\ell)\,.
$$
Since $(\rho_i, V_i)$ is of level $p_i^{n_i}$, there exists a nonzero eigenvector $v_i \in V_i$ of $\rho_i(\ft)$ with an eigenvalue $\w_i$ which is a primitive $p_i^{n_i}$-th root of unity. Thus, $\rho(\ft)$ has an eigenvalue $\zeta=\w_1\cdots\w_\ell$ which is a primitive $n$-th root of unity. Since 
$\{e_X\mid X \in \irr(\CC)\}$
is an eigenbasis for $\rho(\ft)$, there exists $X \in \irr(\CC)$ such that $\rho(\ft)_{X,X}=\zeta$. 
\end{proof}

\subsection{The \texorpdfstring{$\SLZ$}{}-modules of transitive modular categories} In this section, we show that the representations of $\SLZ$ associated with 
any 
transitive modular 
category
$\CC$ 
is minimal and irreducible, and that the order of $T_\CC$ is odd and square-free. 

Let $\CC$ be a transitive modular category, $(\rho,V_\CC)$ a level $n$ representation of $\SLZ$ associated with $\CC$, and  $(s, t)$ the corresponding normalized modular data. As before, the Galois group $G_\CC$ is identified with $\irr(\CC)$ via the bijection $\hs \mapsto \hs(\1)$.  Then, we have  
\begin{equation}\label{eq:spec_t}
    \spec{t} = \{t_{\hs, \hs}\mid  \hs\in G_\CC\} = \{\s^2(\zeta)\mid \s \in\gal(\BQ_n/\BQ)\},
\end{equation}
where $\zeta=t_{\1, \1}$. Here, the last equality is a consequence of \eqref{eq:gal_t}.  Therefore, $\GQ$ acts on $\spec{t}$ transitively, and so every eigenvalue of $t$ is a primitive $n$-th root of unity. In particular, 
$$
\BQ(t) = \BQ_n = \BQ(\zeta)\,.
$$
\begin{lem}\label{lem:HC} The characteristic 2-group $H_\CC$ is given by
$$
H_\CC = \{ \hs \in G_\CC \mid t_{\hs, \hs} = t_{\1,\1} \}\,.
$$
Moreover, for any $\hs, \hta \in H_\CC$, $t_{\hs, \hs}= t_{\hta, \hta }$ if and only if $\hs H_\CC = \hta H_\CC$. In particular, each eigenvalue of $t$ has algebraic multiplicity $|H_\CC|$.
\end{lem}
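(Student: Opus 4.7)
The plan is to exploit the Galois formula $\s^2(t_{X,X}) = t_{\hs(X), \hs(X)}$ from \eqref{eq:gal_t}, together with the fact that under transitivity the eigenvalues of $t$ are all primitive $n$-th roots of unity (already established in \eqref{eq:spec_t}), so that $\BQ(t_{\1,\1}) = \BQ_n$. This last point is the pivot: it lets me promote the condition ``$\tau^{2}$ fixes $t_{\1,\1}$'' to the much stronger ``$\tau^{2}=\id$''.

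For the first equality, I would argue as follows. Identify $G_{\CC}$ with $\irr(\CC)$ via $\hs \mapsto \hs(\1)$, so that for any $\hs \in G_{\CC}$ and any lift $\tau \in \gal(\BQ_n/\BQ)$ of $\hs$, one has $t_{\hs,\hs} = \tau^{2}(t_{\1,\1})$ by \eqref{eq:gal_t}. If $\hs \in H_{\CC}$, pick a lift $\tau \in \tH \subseteq \Omega_{2}^{n}$; then $\tau^{2} = \id$ and hence $t_{\hs,\hs} = t_{\1,\1}$. Conversely, if $t_{\hs,\hs} = t_{\1,\1}$, any lift $\tau$ of $\hs$ satisfies $\tau^{2}(t_{\1,\1}) = t_{\1,\1}$; since $t_{\1,\1}$ generates $\BQ_n$, this forces $\tau^{2} = \id$, so $\tau \in \Omega_{2}^{n}$ and therefore $\hs = \res^{\BQ_n}_{\BQ(S)}(\tau) \in H_{\CC}$.

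For the refined version (the statement presumably means $\hs, \hta \in G_{\CC}$), I would fix lifts $\tau_{\hs}, \tau_{\hta} \in \gal(\BQ_n/\BQ)$ of $\hs, \hta$. Then
\[
t_{\hs,\hs} = t_{\hta,\hta} \iff \tau_{\hs}^{2}(t_{\1,\1}) = \tau_{\hta}^{2}(t_{\1,\1}) \iff (\tau_{\hs} \tau_{\hta}^{-1})^{2}(t_{\1,\1}) = t_{\1,\1},
\]
where I used that $\gal(\BQ_n/\BQ)$ is abelian. Since $\BQ(t_{\1,\1}) = \BQ_n$, this is equivalent to $(\tau_{\hs}\tau_{\hta}^{-1})^{2}=\id$, i.e.\ $\tau_{\hs}\tau_{\hta}^{-1} \in \Omega_{2}^{n}$, which upon restriction to $\BQ(S)$ reads $\hs \hta^{-1} \in H_{\CC}$, i.e.\ $\hs H_{\CC} = \hta H_{\CC}$.

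The multiplicity assertion is then immediate: the map $G_{\CC} \to \spec{t}$, $\hs \mapsto t_{\hs,\hs}$ is surjective (since $\irr(\CC) = G_{\CC}$) and its fibers are precisely the cosets of $H_{\CC}$ in $G_{\CC}$, each of size $|H_{\CC}|$. The only real obstacle is the justification that $\tau^{2}$ fixing $t_{\1,\1}$ implies $\tau^{2}=\id$, and this is handled by the primitivity of $t_{\1,\1}$ as an $n$-th root of unity, which we have in the transitive setting from \eqref{eq:spec_t}.
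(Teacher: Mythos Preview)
Your proof is correct and follows essentially the same approach as the paper: both hinge on the fact that $t_{\1,\1}$ is a primitive $n$-th root of unity (so $\BQ(t_{\1,\1})=\BQ_n$), which upgrades ``$\tau^2$ fixes $t_{\1,\1}$'' to ``$\tau\in\Omega_2^n$'', and both then use \eqref{eq:gal_t} in the same way. The only cosmetic difference is that for the coset statement the paper reduces directly to the first part by computing $t_{\hta^{-1}\hs,\hta^{-1}\hs}=t_{\1,\1}$, whereas you rerun the lift argument; your observation that the hypothesis should read $\hs,\hta\in G_\CC$ rather than $H_\CC$ matches what the paper's own proof actually uses.
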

\begin{proof}
Let $\zeta= t_{\1, \1}$.  Since  $\BQ(\zeta)=\BQ_n$,   we have 
$$
\O2=\{\s \in \gal(\BQ_{n}/\BQ) \mid \s^2(\zeta)= \zeta\} = \{\s \in \gal(\BQ_{n}/\BQ) \mid t_{\hs, \hs}= \zeta\}.
$$
Thus,  if $\hs \in H_\CC$, then there exists $\s \in \O2$ such that  $\s|_{\BQ(S)} = \hs$, which means $t_{\hs, \hs}=\zeta$. Conversely, if  $\hs \in G_\CC$ such that $t_{\hs, \hs}=\zeta$, then there exists $\s \in \gal(\BQ_n/\BQ)$ such that  $\s|_{\BQ(S)} = \hs$. By \eqref{eq:gal_t}, 
$\s^2(\zeta) =t_{\hs, \hs}=\zeta$. 
Thus, $\s \in \O2$, and hence $\hs \in H_\CC$. This proves the first statement.

Let $\hs, \hta \in G_\CC$, and $\tau \in \gal(\BQ_n/\BQ)$ such that $\tau|_{\BQ(S)} = \hta$. If $t_{\hs, \hs} = t_{\hta, \hta}$, then 
$t_{\hs, \hs} = \tau^2(\zeta)$ or 
$$
\zeta = \tau^{-2}(t_{\hs, \hs} ) = t_{\hta^{-1}\hs, \hta^{-1}\hs}\,.
$$
Therefore, $\hta^{-1} \hs \in H_\CC$ and so  $\hta H_\CC = \hs H_\CC$. Conversely, if $\hta H_\CC = \hs H_\CC$, then $\hs =\hta \hm $ for some $\hm \in H_\CC$, and hence
\begin{equation*}
t_{\hs,\hs} = t_{\hta \hm,\hta \hm} = \tau^2(t_{\hm,\hm}) =  \tau^2(t_{\1,\1}) =  t_{\hta,\hta}\,.
\qedhere
\end{equation*}
\end{proof}
Now, we can prove the major theorem of this section. 
\begin{thm}\label{thm:tran_irr}
Let $\CC$ be a nontrivial transitive modular category. Then every representation of $\SLZ$ associated with $\CC$ is  minimal and irreducible.  Moreover, the order of the T-matrix $T$ of $\CC$ is odd and square-free, and every prime factor of $\ord(T)$ is greater than 3.
\end{thm}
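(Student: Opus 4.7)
The plan is to fix a level $n$ representation $(\rho, V_\CC)$ of $\SLZ$ associated with $\CC$ with normalized modular data $(s,t)$, and set $\zeta := t_{\1,\1}$, which is a primitive $n$-th root of unity by \eqref{eq:spec_t}. The main step will be to prove that the characteristic $2$-group $H_\CC$ is trivial; once this is done, Proposition~\ref{l:phi2} forces $\dim V_\CC = |G_\CC| = \varphi_2(n)$, so $\rho$ is minimal, and Corollary~\ref{c:minimal_is_irr} immediately gives irreducibility.

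To force $H_\CC = \{1\}$, I would combine Proposition~\ref{p:HC-decomp} with the regularity of the $G_\CC$-action established in Proposition~\ref{p:sizeofGalois}. Since $H_\CC \subseteq G_\CC$ acts freely on $\irr(\CC)$, every stabilizer $\stab_{H_\CC}(X)$ is trivial and Proposition~\ref{p:HC-decomp} supplies the $\SLZ$-module decomposition $V_\CC = \bigoplus_{\chi \in \irr(\tH)} V_\CC^\chi$ into $|H_\CC|$ pairwise inequivalent nonzero summands. A short character computation (the $\tH$-action on each orbit span is a signed free permutation representation whose character vanishes off the identity) shows that each orbit span is a copy of the regular representation of $\tH$; since there are $|G_\CC|/|H_\CC| = \varphi_2(n)$ $H_\CC$-orbits on $\irr(\CC)$, it follows that $\dim V_\CC^\chi = \varphi_2(n)$ for every $\chi$. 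Lemma~\ref{lem:HC} tells us $t$ is constant on each $H_\CC$-orbit, whence $\spec{t|_{V_\CC^\chi}} = \spec{t} = \{\s^2(\zeta) \mid \s \in \gal(\BQ_n/\BQ)\}$. Therefore every $V_\CC^\chi$ is a level $n$ minimal representation of the same type, is irreducible by Corollary~\ref{c:minimal_is_irr}, and is pinned down up to equivalence by Lemma~\ref{l:unique_minimal}. Any two distinct summands would then be simultaneously isomorphic and inequivalent, a contradiction, so $|H_\CC| = 1$.

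For the arithmetic claim on $\ord(T)$, I would apply Lemma~\ref{l:unique_minimal} to the now minimal irreducible $\rho$: there are $d \mid 12$ and distinct primes $p_1, \dots, p_\ell > 3$ with $n = d\, p_1 \cdots p_\ell$, and writing $\zeta = \zeta_d^{l_0}\prod_i \zeta_{p_i}^{l_i}$ we have $\rho \cong \chi_x \o \bigotimes_i \eta^{p_i}_{j_i}$ with $x := \zeta_d^{l_0}$. Twisting $\rho$ by the $1$-dimensional character $\chi_x^{-1}$ of $\SLZ$ produces another lifting $\rho' := \chi_x^{-1}\o \rho$ of $\bar\rho_\CC$, whose $\ft$-value is $x^{-1}t$, of level $n' = p_1 \cdots p_\ell$, and with $t'_{\1,\1} = x^{-1}\zeta = \prod_i \zeta_{p_i}^{l_i} \in \mu_{n'}$. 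Since $\rho$ and $\rho'$ both lift $\bar\rho_\CC$, their $\ft$-values agree up to scalar, so $\rho'(\ft) = \gamma T$ for some scalar $\gamma$; evaluating at $\1$ and using $\theta_{\1} = 1$ gives $\gamma = t'_{\1,\1} \in \mu_{n'}$. Hence $\theta_X = \gamma^{-1} t'_{X,X} \in \mu_{n'}$ for every $X \in \irr(\CC)$, so $\ord(T)$ divides $n' = p_1 \cdots p_\ell$, which is odd, square-free, and whose prime factors all exceed $3$.

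The principal obstacle is the trivialization of $H_\CC$: one must coordinate dimensions, $t$-spectra, and minimal types of the submodules $V_\CC^\chi$ so that regularity of $G_\CC$ together with Lemma~\ref{lem:HC} collapses them all to the same abstract minimal irreducible of $\SLZ$, clashing with the inequivalence clause of Proposition~\ref{p:HC-decomp}. The final arithmetic step is then comparatively clean once one has the "untwisted" lifting $\rho'$ of level $p_1 \cdots p_\ell$.
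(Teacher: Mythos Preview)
Your proposal is correct and follows essentially the same approach as the paper: trivialize $H_\CC$ by showing that the isotypic pieces $V_\CC^\chi$ are all minimal of the same type (via the regular-representation structure on each $H_\CC$-orbit span and Lemma~\ref{lem:HC}), then collide Lemma~\ref{l:unique_minimal} with the inequivalence clause of Proposition~\ref{p:HC-decomp}. The only minor deviation is in the final arithmetic step, where the paper invokes \cite[Thm.~II(i)]{DongLinNg} to obtain $\ord(T)\mid m\mid 12\ord(T)$ and hence $\ord(T)=m$, while you argue more directly that $\gamma=t'_{\1,\1}\in\mu_{n'}$ and $t'_{X,X}\in\mu_{n'}$ force $\theta_X\in\mu_{n'}$; your route yields only $\ord(T)\mid n'$, but that already suffices for the stated conclusion.
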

\begin{proof}
Let $(\rho, V_\CC)$ be a level $n$ representation of $\SLZ$ associated with $\CC$, and $H_\CC$ the characteristic 2-group of $\CC$.
By Propositions \ref{p:sizeofGalois} and \ref{p:HC-decomp}, with $\tH$ defined in \eqref{eq:HC-tHC},  $V_\CC$ admits an $\SLZ$-module decomposition 
$$
V_\CC = \bigoplus_{\chi \in \irr(\tH)} V_\CC^\chi
$$
such that $V_\CC^\chi \ne 0$ for all $\chi \in \tH$. We proceed to determine $V_\CC^\chi$ for each $\chi  \in \irr(\tH)$.

Recall that the $\tH$-action on $V_\CC$ is given by
$$
\s \cdot e_\hm = g_\s (e_\hm) = \e_\s(\hm) e_{\hs\hm}
$$
for any $\s \in \tilde H_\CC$ and $\hm \in G_\CC$. For any $\hm \in G_\CC$, the subspace $V_{\hm}$ of $V_\CC$ spanned by $\{e_{\hs \hm}\mid \hs \in H_\CC\}$ is closed under this $\tilde H_\CC$-action, and $\ft$ acts as the scalar $t_{\hm, \hm}$ on $V_{\hm}$ by Lemma \ref{lem:HC}. 
Therefore, $V_{\hm}$ admits an isotypic decomposition 
$$
V_{\hm} = \bigoplus_{\chi \in \irr(\tH)} V_{\hm}^\chi \,.
$$

Since $g_{\id} = \id_{V_\CC}$, the character $\psi_\hm$ of $\tH$ afforded by $V_\hm$ is given by
$$
\psi_\hm(\s) = |H_\CC|\cdot \delta_{\s, \id} \quad \text{for any } \s \in \tH\,.
$$
Therefore, as an $\tH$-module, $V_\hm$ is equivalent to the regular representation of $\tH$. Consequently, $\dim(V_\hm^\chi)=1$ for each $\chi \in \irr(\tH)$. 

Let $\Lambda$ be a complete set of coset representatives of $H_\CC$ in $G_\CC$.
Then, 
$$
V_\CC = \bigoplus_{\hm \in \Lambda} V_\hm
$$
is a decomposition of $\tH$-modules. Therefore, 
$$
V_\CC^\chi = \bigoplus_{\hm \in \Lambda} V_\hm^\chi
$$ for each $\chi \in \irr(\tH)$, and $\dim(V_\CC^\chi)=|G_\CC|/|H_\CC| = \varphi_2(n)$ by Proposition \ref{l:phi2}. Let $(\rho^\chi, V_\CC^\chi)$ denote the corresponding subrepresentation of $(\rho, V_\CC)$. Then
$$
\spec{\rho^\chi(\ft)} =\{t_{\hs,\hs}\mid \hs \in \Lambda\}=\{\s^2(t_{\1,\1})\mid \s \in \GQ\}
$$
by Lemma \ref{lem:HC}. Therefore, for any $\chi \in \irr(\tH)$, the level $n$ representation $(\rho^\chi, V_\CC^\chi)$ of $\SLZ$ is 
minimal of type $l \in \UZ{n}$, where $l$ is determined by $\zeta_n^l=t_{\1, \1} $.  Hence, by Corollary \ref{c:minimal_is_irr}, $(\rho^\chi, V_\CC^\chi)$ is irreducible for each $\chi \in \tH$.

It follows from Lemma \ref{l:unique_minimal} that $V_\CC^\chi \cong V_\CC^{\chi'}$ as $\SLZ$-modules for any $\chi, \chi' \in \irr(\tH)$. In view of Proposition \ref{p:HC-decomp}, $\tH$ must be trivial and so does $H_\CC$. Therefore, $(\rho, V_\CC)$ is minimal and irreducible, and $n = d\cdot p_1\cdots p_\ell$ where $d \mid 12$ and $p_1, \dots, p_\ell$ are distinct primes greater than 3. Moreover,
$$
\rho \cong \chi \o \rho'
$$
for some 1-dimensional representation $\chi$ and a level $m=p_1\cdots p_\ell$ 
minimal 
representation $(\rho', V')$ of $\SLZ$. By tensoring $\rho$ with the dual representation $\chi^*$ of $\chi$, we find $(\rho', V')$ is equivalent to a representation of $\SLZ$ associated with $\CC$. By \cite[Thm II (i)]{DongLinNg}, we have
$$
\ord(T) \mid m \mid 12 \ord(T)
$$
which implies  $\ord(T) = m$ since $\gcd(m,12)=1$.
\end{proof}

\section{Classification of transitive modular categories}\label{sec:class}
In this section, we prove that a nontrivial prime and transitive modular category must be equivalent to $\BB_{p-2, l}$ for some prime $p > 3$ and $l \in \UZ{2p}$. In view of Theorem \ref{thm:prime-decomp}, we complete the classification of transitive modular categories in Theorem \ref{thm:main}. The 
minimal irreducibility of the representations of $\SLZ$  associated with transitive modular categories is  crucial to the characterization of the prime ones. 

We begin with the realization of minimal irreducible representations of $\SLZ$ by transitive modular categories.

\begin{lem}\label{l:realizing_min}
Let $p >3$ be a prime. Then every level $p$ minimal irreducible representation of $\SLZ$ is equivalent to a representation of $\SLZ$ associated with $\BB_{p-2, l}$ for some $l \in \UZ{2p}$. 
\end{lem}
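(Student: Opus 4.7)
By Example~\ref{rmk:eta-12} together with Lemma~\ref{l:unique_minimal}, a level-$p$ minimal irreducible representation of $\SLZ$ is equivalent to precisely one of $\eta^p_{+1}$ and $\eta^p_{-1}$, distinguished by whether $\spec(\ft)$ equals the Galois orbit of $\zeta_p^a$ with $a \in (\UZ{p})^2$ or with $a \notin (\UZ{p})^2$. My plan is to realize each of these as the level-$p$ lifting associated to a suitable $\BB_{p-2,l}$.

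Fix $l \in \UZ{2p}$. By \eqref{eq:so3-st}, $T^{(0)}_{j,j} = \zeta_p^{l j(j+1)}$ for $0 \le j \le (p-3)/2$, and inspection at $j=1$ shows $\ord(T_{\BB_{p-2,l}}) = p$. Since $p$ is odd and in particular $4 \nmid p$, \cite[Lem.~2.2]{DongLinNg} (as used in the proof of Corollary~\ref{c:HC}) supplies a level-$p$ lifting $(\rho_l, V_{\BB_{p-2,l}})$ of the projective representation of $\SLZ$ attached to $\BB_{p-2,l}$. Proposition~\ref{p:B-trans-prime} asserts that $\BB_{p-2,l}$ is transitive, so Theorem~\ref{thm:tran_irr} forces $\rho_l$ to be minimal and irreducible. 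Since $\dim V_{\BB_{p-2,l}} = (p-1)/2 = \varphi_2(p)$ and the level equals $p$, Lemma~\ref{l:unique_minimal} yields $\rho_l \cong \eta^p_{j_l}$ for some $j_l \in \{\pm 1\}$.

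To identify $j_l$, I will compute $\spec \rho_l(\ft)$. Because any two liftings of $\bar\rho_{\BB_{p-2,l}}$ differ by tensoring with a character in $\mu_{12}$ and $\gcd(12, p) = 1$, the level-$p$ lifting is the unique one whose $\ft$-matrix has the form $t = y\,T^{(0)}$ for some $y \in \mu_p$. Now the identity
\[
4 j(j+1) = (2j+1)^2 - 1,
\]
combined with the observation that $\{2j+1 \bmod p : 0 \le j \le (p-3)/2\}$ is a complete set of coset representatives of $\UZ{p}/\{\pm 1\}$ in $\UZ{p}$, yields $\{(2j+1)^2 \bmod p\} = (\UZ{p})^2$, and hence
\[
\{l j(j+1) \bmod p\} \;=\; \tfrac{l}{4}\bigl((\UZ{p})^2 - 1\bigr).
\]
The minimality of $\rho_l$ imposes $\spec(y T^{(0)}) = \{\zeta_p^{a b^2} : b \in \UZ{p}\}$ for some $a \in \UZ{p}$. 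Substituting the spectrum above shows that the unique $y$ producing such a coset is $y = \zeta_p^{l/4}$, and then $a \equiv l/4 \pmod p$. Since $\jacobi{4}{p} = 1$, this gives $j_l = \jacobi{l/4}{p} = \jacobi{l}{p}$.

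Finally, the reduction $\UZ{2p} \twoheadrightarrow \UZ{p}$ is surjective and the Legendre symbol attains both values $\pm 1$ on $\UZ{p}$, so as $l$ varies over $\UZ{2p}$, both $\eta^p_{+1}$ and $\eta^p_{-1}$ are realized. The main obstacle is the spectrum calculation in the third paragraph; once the elementary congruence $4j(j+1) = (2j+1)^2 - 1$ is in hand, the identification of $(2j+1)^2$ with the full set of quadratic residues, and then the uniqueness of $y$, are short consequences of the minimality already ensured by Theorem~\ref{thm:tran_irr}.
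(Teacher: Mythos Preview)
Your approach is sound but differs from the paper's and has one gap that needs patching. The paper never computes the spectrum of $T^{(0)}$; instead it takes the level-$p$ lifting $\rho$ for $\BB_{p-2,1}$, writes $t_{\1,\1}=\zeta_p^{a}$ for some (unidentified) $a$, and then observes that applying the Galois automorphism $\s_l\in\gal(\BQ_p/\BQ)$ to $\rho$ produces a level-$p$ lifting for $\BB_{p-2,l}$ with $t_{\1,\1}=\zeta_p^{al}$. Hence $j_l=\jacobi{al}{p}$, and as $l$ runs over $\UZ{2p}$ both signs appear. Your route is more explicit and actually yields the sharper statement $j_l=\jacobi{l}{p}$, at the cost of the spectrum calculation.

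The gap is in your third paragraph. You assert that ``the unique $y$ producing such a coset is $y=\zeta_p^{l/4}$'', but all you have verified is that $y=\zeta_p^{l/4}$ \emph{does} give a Galois-orbit spectrum. Knowing that the level-$p$ lifting is unique tells you only that \emph{some} specific $y\in\mu_p$ works; it does not tell you which one. Having the correct spectrum is necessary but not sufficient for $y T^{(0)}$ to extend to a representation, so a~priori several $y\in\mu_p$ might pass this spectrum test, and you would not know which of them is the genuine lifting. The fix is short: writing the spectrum condition additively, you need to show that $w+Q\in\{Q,N\}$ (with $Q,N\subset\Zn{p}$ the residues and nonresidues) forces $w=0$. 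Summing both sides and using $\sum_{r\in Q}r\equiv\sum_{n\in N}n\equiv 0\pmod p$ (which follows from $2\sum_{r\in Q}r=\sum_{k=1}^{p-1}k^2\equiv 0$ since $p>3$) gives $\tfrac{p-1}{2}\,w\equiv 0$, hence $w=0$. With this in hand your identification $y=\zeta_p^{l/4}$ is justified and the rest of the argument goes through.
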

\begin{proof}
Recall from Proposition \ref{p:B-trans-prime} that  $\BB_{p-2,l}$ is a prime and transitive modular category  for 
any prime $p >3$ and $l \in \UZ{2p}$. Moreover, the order of the T-matrix of $\BB_{p-2,l}$ is $p$. 
By \cite[Lemma 2.2]{DongLinNg}, 
there exists a level $p$ representation $(\rho, \BC^{\varphi_2(p)})$ of $\SLZ$ associated with $\BB_{p-2,1}$, and we set $t=\rho(\ft)$. Then, by Theorem \ref{thm:tran_irr}, $(\rho, \BC^{\varphi_2(p)})$ is a minimal  irreducible representation of $\SLZ$ of type $a$ where $\zeta_p^a = t_{\1, \1}$.  Therefore,  by Lemma \ref{l:unique_minimal},
$$
(\rho, \BC^{\varphi_2(p)}) \cong (\eta^p_j, \BC^{\varphi_2(p)}),\quad  \text{where }j=\jacobi{a}{p}\,.
$$

For any $l \in \UZ{2p}$, define $\s_l \in \gal(\BQ_p/\BQ)$ 
by $\s_l(\zeta_p) = \zeta_p^l$. Since $\rho(\fa)$ is a matrix over $\BQ_p$ for any $\fa \in \SLZ$ (cf.~\cite[Thm.~II]{DongLinNg}), $\rho_l(\fa) := \s_l(\rho(\fa))$ 
defines another level $p$ representation of $\SLZ$, and $(\rho_l, \BC^{\varphi_2(p)})$ is a representation of $\SLZ$ associated with $\BB_{p-2,l}$. Since $\s_l(t_{\1, \1}) = \zeta_p^{al}$, we have
$$
(\rho_l, \BC^{\varphi_2(p)}) \cong (\eta^p_{j_l}, \BC^{\varphi_2(p)}),\quad  \text{where }j_l=\jacobi{al}{p}\,.
$$
Therefore, every  level $p$ minimal irreducible representation of $\SLZ$ is equivalent to a representation of $\SLZ$ associated with $\BB_{p-2, l}$ for some $l \in \UZ{2p}$, as desired.
\end{proof}
\begin{cor}\label{c:realizing_min}
Let $n=p_1\cdots p_\ell$ for some distinct primes $p_1,\cdots, p_\ell > 3$. Then every level $n$ minimal irreducible representation of $\SLZ$ is equivalent to a representation of $\SLZ$ associated to a transitive modular category 
$$
\DD = \BB_{p_1-2, l_1} \bot \cdots \bot \BB_{p_\ell-2, l_\ell}
$$
for some  $l_a \in \UZ{2p_a}$, $a=1, \dots, \ell$.
\end{cor}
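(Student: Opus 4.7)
The plan is to reduce the general case to the prime case, Lemma \ref{l:realizing_min}, using the factorization of minimal irreducible representations from Lemma \ref{l:unique_minimal} together with the primality--transitivity Proposition \ref{p:prod_A}.

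First, I would apply Lemma \ref{l:unique_minimal} to a given level $n$ minimal irreducible representation $(\phi, V)$ of $\SLZ$. Since $n = p_1\cdots p_\ell$ with each $p_a > 3$, the ``$d$ part'' of the factorization in that lemma is trivial ($d=1$), so there is no $\chi_x$ factor and we obtain unique residues $l_a \in (\BZ/p_a\BZ)^\times$ with
$$\phi \cong \eta^{p_1}_{j_1} \otimes \cdots \otimes \eta^{p_\ell}_{j_\ell}, \qquad j_a = \jacobi{l_a}{p_a},$$
where each $\eta^{p_a}_{j_a}$ is itself a level $p_a$ minimal irreducible representation of $\SLZ$ (Example \ref{rmk:eta-12}).

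Next I would invoke Lemma \ref{l:realizing_min} to realize each tensor factor: for every $a$, there exists $l_a' \in \UZ{2p_a}$ and a level $p_a$ representation $(\rho_a, V_a)$ of $\SLZ$ associated with $\AA^{(0)}_{p_a-2, l_a'}$ such that $(\rho_a, V_a) \cong \eta^{p_a}_{j_a}$. The remaining step is to recognize that the tensor product of these ``associated'' representations is itself associated with the Deligne product
$$\DD := \AA^{(0)}_{p_1-2, l_1'} \boxtimes \cdots \boxtimes \AA^{(0)}_{p_\ell-2, l_\ell'}.$$
This follows because for modular categories $\AA, \B$ the modular data satisfies $S_{\AA\boxtimes\B} = S_\AA \otimes S_\B$ and $T_{\AA\boxtimes\B} = T_\AA \otimes T_\B$ (Section \ref{s:galois}), so $\rho_1 \otimes \cdots \otimes \rho_\ell$ provides a lift of the projective modular representation of $\DD$ and hence is a representation of $\SLZ$ associated with $\DD$. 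Combining the two equivalences yields $\phi \cong \rho_1 \otimes \cdots \otimes \rho_\ell$ as representations of $\SLZ$ associated with $\DD$.

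Finally, $\DD$ is transitive by Proposition \ref{p:prod_A}, which finishes the proof. I do not expect a serious obstacle here: the only point that deserves a sentence of care is that a tensor product of associated representations is associated with the Deligne product, and that the ``$d=1$'' simplification in Lemma \ref{l:unique_minimal} applies precisely because every prime factor of $n$ exceeds $3$. The choice of the $l_a'$ is of course not unique, since Galois conjugation by any $\s_{l}$ with $\jacobi{l}{p_a}=1$ preserves the equivalence class of $\eta^{p_a}_{j_a}$, but this does not affect the statement.
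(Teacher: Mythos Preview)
Your proposal is correct and follows essentially the same route as the paper: factor the minimal irreducible representation via Lemma~\ref{l:unique_minimal} (with $d=1$ since every prime factor exceeds~$3$), realize each $\eta^{p_a}_{j_a}$ by Lemma~\ref{l:realizing_min}, form the Deligne product, and invoke Proposition~\ref{p:prod_A} for transitivity. Your explicit remark that the tensor product of associated representations is associated with the Deligne product (via the Kronecker-product structure of $S$ and $T$) is the one step the paper leaves implicit.
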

\begin{proof}
Let $(\phi, V)$ be a level $n$ minimal irreducible representation of $\SLZ$. By Lemma \ref{l:unique_minimal}, there exists level $p_a$ minimal irreducible representation $(\eta^{p_a}_{j_a}, V_a)$ of $\SLZ$ for each $a = 1, \dots, \ell$ such that 
$$
(\phi,V)\cong (\eta^{p_1}_{j_1}, V_1)\o \cdots \o (\eta^{p_\ell}_{j_\ell}, V_\ell)
$$
where $V_a =\BC^{\varphi_2(p_a)}$.  By Lemma \ref{l:realizing_min}, $(\eta^{p_a}_{j_a}, V_a)$ is equivalent to a representation $(\rho_a, V_{\DD_a})$ of $\SLZ$ associated with a transitive modular category $\DD_a=\BB_{p_a-2, l_a}$ for some $l_a \in \UZ{2p_a}$. Let $\DD = \DD_1 \bot  \cdots \bot \DD_\ell$. Then $\DD$ is transitive by Proposition \ref{p:prod_A} and
$$
(\rho, V_\DD) = (\rho_1, V_{\DD_1}) \o \cdots \o (\rho_\ell, V_{\DD_\ell}) 
$$
is a representation of $\SLZ$ associated with $\DD$. Now, we have
\begin{equation*}
(\phi,V)\cong (\rho, V_\DD)\,. 
\qedhere
\end{equation*}
\end{proof}

\begin{thm}\label{thm:ordT-prime}
Let $\CC$ be a nontrivial prime and transitive modular category.  Then the order of the T-matrix is a prime number greater than 3.
\end{thm}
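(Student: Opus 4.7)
Set $n := \ord(T)$. By Theorem~\ref{thm:tran_irr} we may write $n = p_1 \cdots p_\ell$ with $p_1,\dots,p_\ell$ distinct primes greater than $3$, and the goal is to show $\ell = 1$. The plan is to argue by contradiction: if $\ell \ge 2$, I will transport a nontrivial proper modular subcategory from a Deligne-product realization of the associated $\SLZ$-representation back to $\CC$, thereby violating primality. Concretely, fix a level-$n$ representation $(\rho_\CC, V_\CC)$ of $\SLZ$ associated with $\CC$; by Theorem~\ref{thm:tran_irr} it is minimal and irreducible. Corollary~\ref{c:realizing_min} then furnishes a transitive Deligne product $\DD = \BB_{p_1-2,\, l_1} \bot \cdots \bot \BB_{p_\ell-2,\, l_\ell}$ (for some $l_i \in \UZ{2p_i}$) whose associated $\SLZ$-representation $(\rho_\DD, V_\DD)$ is equivalent to $\rho_\CC$ via some intertwiner $U : V_\CC \to V_\DD$; note that $\BB_{p_1-2,\, l_1} \bot \Vec \bot \cdots \bot \Vec$ is a nontrivial proper modular subcategory of $\DD$ whenever $\ell \ge 2$.

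The first task is to extract from $U$ a bijection of simple objects. Since the characteristic $2$-group $H_\CC$ is trivial by Theorem~\ref{thm:tran_irr}, and likewise for $H_\DD$, both $\rho_\CC(\ft)$ and $\rho_\DD(\ft)$ have $\varphi_2(n)$ pairwise distinct diagonal entries in their simple-object bases. Since eigenspaces are one-dimensional, $U$ is forced to act by $U(e_X) = c_X \, e_{\phi(X)}$ for some bijection $\phi:\irr(\CC)\to\irr(\DD)$ and nonzero scalars $c_X$, with matching eigenvalues $t^\CC_{X,X} = t^\DD_{\phi(X),\phi(X)}$. Symmetry of the normalized $s$-matrices forces $c_X^2 = c_Y^2$ for all $X,Y$, so after rescaling $U$ I obtain a sign function $\epsilon : \irr(\CC) \to \{\pm 1\}$ with $s^\DD_{\phi(X), \phi(Y)} = \epsilon_X \epsilon_Y \, s^\CC_{X, Y}$. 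By exploiting the freedom in Corollary~\ref{c:realizing_min} (varying $l_i$ within the Legendre class determined by $\rho_\CC$) one can then arrange $\phi(\1_\CC) = \1_\DD$: the $\varphi_2(n)$ admissible choices of $(l_1,\dots,l_\ell)$ realize $\varphi_2(n)$ different values of $t^\DD_{\1,\1}$, which a count based on the anomaly formula from Section~\ref{sec:example} shows to exhaust the primitive $n$-th roots of unity of the Galois type required by $\rho_\CC$. This matching step is the most delicate part of the argument.

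With $\phi(\1_\CC) = \1_\DD$ in hand, substituting $s^\DD_{\phi(X),\phi(Y)} = \epsilon_X\epsilon_Y \, s^\CC_{X,Y}$ into the Verlinde formula and using that every simple object of $\CC$ and $\DD$ is self-dual (Lemma~\ref{lem:1}) yields
\[
N^Z_{X, Y}(\CC) \;=\; \epsilon_X \epsilon_Y \epsilon_Z \cdot N^{\phi(Z)}_{\phi(X), \phi(Y)}(\DD).
\]
Nonnegativity of fusion coefficients forces the sign product to equal $+1$ whenever either side is nonzero, and consequently $\phi$ sends fusion-closed subsets of $\irr(\DD)$ to fusion-closed subsets of $\irr(\CC)$. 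Applying this to the set $I := \irr(\BB_{p_1-2, l_1}\bot\Vec\bot\cdots\bot\Vec) \subsetneq \irr(\DD)$, which contains $\1_\DD$ and is fusion-closed in $\DD$, I obtain a proper fusion-closed subset $\phi^{-1}(I)\subsetneq \irr(\CC)$ containing $\1_\CC$. The fusion subcategory $\CC_1 \subset \CC$ it generates is therefore nontrivial and proper, and is modular by Corollary~\ref{cor:fus-subcat}. This contradicts the primality of $\CC$, so $\ell = 1$ and $\ord(T) = p_1$ is a prime greater than $3$.
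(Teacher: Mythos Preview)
Your overall plan matches the paper's: realize $(\rho_\CC,V_\CC)$ by a transitive Deligne product $\DD$ via Corollary~\ref{c:realizing_min}, set up a bijection $\phi$ between simple objects by matching $\ft$-eigenvalues (which have multiplicity one since $H_\CC$ is trivial), relate the normalized $s$-matrices by a sign twist, and push a proper fusion-closed set from $\DD$ back to $\CC$ through Verlinde. The substantive difference is your insistence on arranging $\phi(\1_\CC)=\1_\DD$ by tuning the parameters $l_i$. The claim is true, but your justification via ``the anomaly formula from Section~\ref{sec:example}'' is not the right pointer. What actually makes it work is the explicit computation in the proof of Lemma~\ref{l:realizing_min}: the unique level-$p_i$ lifting for $\BB_{p_i-2,l_i}$ has $t^{(l_i)}_{\1,\1}=\s_{l_i}\!\big(t^{(1)}_{\1,\1}\big)=\zeta_{p_i}^{a_i l_i}$ for some fixed $a_i$, so as $l_i$ ranges over a fixed Legendre class modulo $p_i$ this value sweeps out precisely $\spec{\eta^{p_i}_{j_i}(\ft)}$. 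By the Chinese Remainder Theorem the $\varphi_2(n)$ admissible tuples $(l_1,\dots,l_\ell)$ then realize each element of $\spec{\rho_\CC(\ft)}$ exactly once as $t^\DD_{\1,\1}$, and one of them matches $t^\CC_{\1,\1}$. With that filled in, your argument is complete.

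The paper sidesteps this bookkeeping entirely. It allows $\Phi(\1)=(b_1,b_2)$ to be arbitrary and takes the candidate set to be $D=\Phi^{-1}(E_1\times\{b_2\})$. In the Verlinde expression for $N^Z_{X,Y}$ with $X,Y\in D$, the sum over $W$ factors through the Kronecker decomposition $s\simeq \smatp{1}\otimes \smatp{2}$ (up to the signs $U_{(\cdot,\cdot)}=\pm 1$, which square away), and the second factor collapses to $\sum_{w_2}\smatp{2}_{b_2,w_2}\overline{\smatp{2}_{z_2,w_2}}=\delta_{b_2,z_2}$ by unitarity of $\smatp{2}$. This forces $Z\in D$ directly, with no need to identify which object of $\DD$ corresponds to $\1_\CC$. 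Your route works once the unit-matching is properly argued, but the paper's is cleaner: it uses only the tensor structure of the $\SLZ$-representation and the unitarity of one tensor factor.
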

\begin{proof}
By Theorem \ref{thm:tran_irr}, $\ord(T_\CC)=N$ is odd and has a prime factor $p >3$. It follows from \cite[Lem.~2.2]{DongLinNg} that there exists a level $N$ representation $(\rho, V_\CC)$ of $\SLZ$ associated with  $\CC$. Again, by Theorem \ref{thm:tran_irr}, $(\rho, V_\CC)$ is minimal and irreducible.

Suppose $N$ is not a prime. Then $N=pq$ for some odd square-free integer $q$ not divisible by $p$ and all the prime factors of $q$ are greater than 3. In particular, $\varphi_2(q) >1$. In view of Lemma \ref{l:unique_minimal}, there exist minimal and irreducible $\SLZ$-representations   $(\phi_1, V_1)$ and $(\phi_2, V_2)$ of levels $p$ and $q$ respectively such that 
\begin{equation}\label{eq:equiv1}
    (\rho, V_\CC) \cong (\phi_1, V_1) \otimes (\phi_2, V_2)\,.
\end{equation}
It follows from Lemma \ref{l:realizing_min} and Corollary \ref{c:realizing_min} that there exist modular categories $\B_1, \B_2$ such that $(\phi_i, V_i)$ is equivalent to a representation $(\rho_i, V_{\B_i})$ associated with $\B_i$ and 
$$
\B_1 = \BB_{p-2, l} \quad \text{for some } l \in \UZ{2p}\,.
$$
Note that $(\rho_1, V_{\B_1}) \o (\rho_2, V_{\B_2})$ is a representation of $\SLZ$ associated with $\B=\B_1 \bot\B_2$ and
\begin{equation}\label{eq:equiv2}
(\rho, V_\CC) \cong (\rho_1, V_{\B_1}) \o (\rho_2, V_{\B_2})\,.
\end{equation}
In particular, the eigenvalues of $\rho(\ft)$ are all distinct.

Let $E_i$ be the standard basis for $V_{\B_i}$.  Then, $E_i$ is an eigenbasis of $\rho_i(\ft)$ and 
$$E_\B=\{x_1\o x_2 \mid (x_1, x_2) \in E_1 \times E_2\}$$ 
is an eigenbasis of $\rho_1 (\ft) \o \rho_2(\ft)$ for $V_\B=V_{\B_1} \o V_{\B_2}$. Since $E_\CC=\{e_X \mid X \in \irr(\CC)\}$ is an eigenbasis of $\rho(\ft)=t$ for $V_\CC$, the equivalence \eqref{eq:equiv2} implies there exists a bijection $\Phi: \irr(\CC) \to E_1 \times E_2$, which is defined as follows: for any $X\in \irr(\CC)$, there exists a unique pair $(x_1, x_2) \in E_1 \times E_2$ satisfying
$$
(\rho_1(\ft) \o \rho_2(\ft)) (x_1\o x_2) = t_{X, X} \cdot x_1\o x_2\,,
$$
and we define $\Phi(X):=(x_1,x_2)$.

Let $\Phi(\1) = (b_1, b_2)$, and $D: = \Phi^{-1}(E_1 \times \{b_2\}) \subseteq \irr(\CC)$. Let $\DD$ be the full subcategory of $\CC$ additively generated by the simple objects  whose isomorphism classes are in $D$, i.e. $\DD$ is a semisimple subcategory of $\CC$ with $\irr(\DD) = D$.  We proceed to show $\DD$ is a fusion subcategory of  $\CC$.

By \cite[Lem.~3.17]{BNRWClassificationByRank}, there exists an intertwining operator $U: (\rho,V_\CC) \to (\rho_1 \o \rho_2, V_\B)$ such that for any $X \in \irr(\CC)$, 
$U(e_X) = U_{(x_1,x_2)}\ x_1\o x_2$ for some scalar $U_{(x_1,x_2)} =\pm 1$ where $\Phi(X)=(x_1,x_2)$. 
Let $\smatp{i}=\rho_i(\fs)$ for $i=1,2$ and $s =\rho(\fs)$. Then
for any $X,Y \in \irr(\CC)$, we have
$$
s_{X,Y} = \smatp{1}_{x_1,y_1}\smatp{2}_{x_2,y_2}U_{(x_1,x_2)}U_{(y_1,y_2)},
$$
where $\Phi(X)=(x_1, x_2), \Phi(Y)=(y_1, y_2) \in E_1 \times E_2$. By the Verlinde formula, for any  $X, Y \in D$ and $Z \in \irr(\CC)$,  we have
$$
\begin{aligned}
N_{X, Y}^{Z}
&=
\sum_{W \in \irr(\CC)}
\frac{s_{X,W} s_{Y,W} \ol{s_{Z,W}}}
{s_{\1,W}}\\
&=
\sum_{(w_1,w_2)\in B}
\frac{
\smatp{1}_{x_1,w_1}\smatp{1}_{y_1,w_1}\ol{s^{(1)}_{z_1,w_1}}
\left(\smatp{2}_{b_2,w_2}\right)^2\ol{s^{(2)}_{z_2,w_2}}
U_{(x_1,b_2)}U_{(y_1,b_2)}U_{(z_1,z_2)}U_{(w_1,w_2)}^3}
{
\smatp{1}_{b_1,w_1}\smatp{2}_{b_2,w_2}U_{(b_1,b_2)}U_{(w_1,w_2)}}\\
\end{aligned}
$$
where $\Phi(X)= (x_1, b_2),\ \Phi(Y)= (y_1, b_2),\ \Phi(Z)= (z_1, z_2)$ and $\Phi(W)= (w_1, w_2)$. Since $U_{(w_1,w_2)}^2 = 1$, we have
$$
\begin{aligned}
N_{X, Y}^{Z}
&=
\frac{U_{(x_1,b_2)}U_{(y_1,b_2)}U_{(z_1,z_2)}}{U_{(b_1,b_2)}}
\sum_{w_1 \in B_1}
\frac{\smatp{1}_{x_1,w_1}\smatp{1}_{y_1,w_1}\ol{s^{(1)}_{z_1,w_1}}}
{\smatp{1}_{b_1,w_1}}
\sum_{w_2 \in B_2}
\smatp{2}_{b_2, w_2}\ol{s^{(2)}_{z_2, w_2}}\\
&=
\delta_{b_2, z_2}
\frac{U_{(x_1,b_2)}U_{(y_1,b_2)}U_{(z_1,z_2)}}{U_{(b_1,b_2)}}
\sum_{w_1 \in B_1}
\frac{\smatp{1}_{x_1,w_1}\smatp{1}_{y_1,w_1}\ol{s^{(1)}_{z_1,w_1}}}
{\smatp{1}_{b_1,w_1}}\,,
\end{aligned}
$$
where the last equality is 
based on 
the fact that $\smatp{2}$ is symmetric and unitary.  Therefore, $N_{X,Y}^Z=0$ whenever $Z \not\in D$. Thus, $\DD$ is closed under the tensor product of $\CC$ and hence a fusion subcategory.

By Theorem~\ref{cor:fus-subcat}, $\DD$ is a modular subcategory of $\CC$. Since $p > 3$, we have $|\irr(\DD)| = |E_1| = \varphi_2(p) > 1$ so $\DD$ is nontrivial. Moreover, since $\CC$ is prime,  $\CC=\DD$ and so $\varphi_2(p)=|\irr(\CC)|$. Therefore, $\varphi_2(q)=1$, a contradiction! Therefore, $N$ is a prime.
\end{proof}

Now, we can prove our major theorem of this section.

\begin{thm}\label{thm:prime-transitive}
Let $\CC$ be a nontrivial transitive prime modular category. Then $\CC$ is equivalent to $\BB_{p-2, l}$ for some prime $p > 3$ and $l \in \UZ{2p}$ as modular categories. Moreover, the set 
$$
\{\BB_{p-2, l} \mid l \in \UZ{2p}\}
$$
is a complete set of inequivalent transitive prime modular categories whose T-matrices are of order $p$.
\end{thm}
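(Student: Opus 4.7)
The plan is to chain together Theorem \ref{thm:ordT-prime}, Theorem \ref{thm:tran_irr}, the realizability result in Lemma \ref{l:realizing_min}, and the Funar--Koberda classification in Lemma \ref{lem:FK-so3}. Theorem \ref{thm:ordT-prime} yields $N := \ord(T_\CC) = p$ for some prime $p > 3$. By \cite[Lem.~2.2]{DongLinNg} there is a level-$p$ representation $(\rho, V_\CC)$ of $\SLZ$ associated with $\CC$, and Theorem \ref{thm:tran_irr} forces $(\rho, V_\CC)$ to be minimal and irreducible. Consequently $|\irr(\CC)| = \varphi_2(p) = (p-1)/2$, each eigenvalue of $\rho(\ft)$ is a primitive $p$-th root of unity, and every $\ft$-eigenspace in $V_\CC$ is one-dimensional.

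The next stage is to transport $\CC$'s modular data from those of a known $\BB_{p-2, l}$. Lemma \ref{l:realizing_min} supplies $l \in \UZ{2p}$ and an $\SLZ$-equivalence $(\rho, V_\CC) \cong (\rho_\B, V_\B)$ with $\B := \BB_{p-2, l}$. Because both $\ft$-spectra consist of simple eigenvalues, any such intertwiner is supported, in the standard bases, on the unique bijection $\Phi: \irr(\CC) \to \irr(\B)$ that pairs equal $\ft$-eigenvalues. Arguing as in the proof of Theorem \ref{thm:ordT-prime} via \cite[Lem.~3.17]{BNRWClassificationByRank}, the intertwiner may be taken to be a signed permutation matrix, so that
$$
s^\CC_{X, Y} \;=\; \varepsilon(X)\,\varepsilon(Y)\,s^\B_{\Phi(X), \Phi(Y)}
$$
for some sign function $\varepsilon: \irr(\CC) \to \{\pm 1\}$. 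Self-duality of $\CC$ and $\B$ (Lemma \ref{lem:1}) together with $\varepsilon(W)^2 = 1$ collapses, upon substitution into the Verlinde formula, the sign contributions to a single factor $\varepsilon(X)\varepsilon(Y)\varepsilon(Z)/\varepsilon(\1_\CC)$, and, provided $\Phi(\1_\CC) = \1_\B$, yields
$$
N^Z_{X, Y}(\CC) \;=\; \frac{\varepsilon(X)\varepsilon(Y)\varepsilon(Z)}{\varepsilon(\1_\CC)}\,N^{\Phi(Z)}_{\Phi(X), \Phi(Y)}(\B).
$$
Non-negativity and integrality of both sides then force the sign ratio to be $+1$ on the support, so $\CC$ shares the fusion rules of $\B$, namely those of $\SO(3)$ at level $p - 2$.

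To arrange the normalization $\Phi(\1_\CC) = \1_\B$, I would use that $\1$ is uniquely characterized by $d_\1 = 1$ in a transitive category (Theorem \ref{thm:1}(ii)): replacing $\B$ by a suitable Galois conjugate $\BB_{p-2, l'}$ -- which has the same fusion rules and, by Lemma \ref{lem:FK-so3}, realizes every possible $\ft_{\1, \1}$ value in the spectrum -- matches units. Once the fusion rules are identified, Lemma \ref{lem:FK-so3} pins $\CC \simeq \BB_{p-2, l''}$ as ribbon (hence modular) categories for some $l'' \in \UZ{2p}$. The moreover clause is then immediate: Proposition \ref{p:B-trans-prime} guarantees each $\BB_{p-2, l''}$ is prime and transitive with $\ord(T) = p$, and Lemma \ref{lem:FK-so3} asserts that $\{\BB_{p-2, l''}\}_{l'' \in \UZ{2p}}$ is a pairwise-inequivalent family exhausting the ribbon categories with these fusion rules.

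The main obstacle is the coordinated normalization of $\Phi$ and $\varepsilon$ that makes the Verlinde comparison go through: since $\Phi$ is forced by $\ft$-eigenvalue matching it need not preserve the tensor unit, and realigning via Galois conjugation of $\B$ is the crucial device. Once the units are aligned, the non-negativity and integrality of fusion coefficients are strong enough to fix the sign pattern and to force the two sets of fusion rules to coincide.
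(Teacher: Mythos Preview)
Your argument matches the paper's in structure: Theorem \ref{thm:ordT-prime} gives $\ord(T_\CC)=p$; Theorem \ref{thm:tran_irr} and Lemma \ref{l:realizing_min} produce the equivalence $(\rho,V_\CC)\cong(\rho_\B,V_\B)$ with $\B=\BB_{p-2,l}$; a signed-permutation intertwiner and the Verlinde formula transfer the fusion rules; and Lemma \ref{lem:FK-so3} finishes. The one genuine difference is how you arrange $\Phi(\1_\CC)=\1_\B$. The paper does \emph{not} replace $\B$: with $x=\Phi^{-1}(\1_\CC)$, it uses transitivity of $\B$ to pick $\s\in\gal(\BQ_p/\BQ)$ with $\hs(V_0)=V_x$, applies $\s$ to the Verlinde sum on the $\B$-side via \eqref{eq:the-sgn}, and reads off $N_{\Phi(a),\Phi(b)}^{\Phi(c)}=\pm N_{V_{\hs(a)},V_{\hs(b)}}^{V_{\hs(c)}}$ directly, so that $\Phi(a)\mapsto V_{\hs(a)}$ is the fusion-ring isomorphism. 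Your variant---swapping $\B$ for a Galois conjugate $\BB_{p-2,l'}$ so that $t^{\B'}_{\1,\1}=t^\CC_{\1,\1}$---is valid too, but requires more than you indicate: Lemma \ref{lem:FK-so3} says nothing about $t_{\1,\1}$-values, and you must verify (using $\rho_{l'}=\s_{l'}(\rho_1)$ from the proof of Lemma \ref{l:realizing_min}) that the admissible $l'$, namely those with $\jacobi{l'}{p}=\jacobi{l}{p}$ so that $(\rho,V_\CC)\cong(\rho_{\B'},V_{\B'})$ persists, actually realize every element of $\spec{t^\CC}$ at the unit. The appeal to Theorem \ref{thm:1}(ii) plays no role in this step.
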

\begin{proof}
Suppose $\CC$ is a nontrivial transitive prime modular category, then by Theorem \ref{thm:ordT-prime}, $ \ord(T_{\CC})$ is a prime $p > 3$. It follows from \cite[Lem.~2.2]{DongLinNg} that there exists a level $p$ representation $(\rho, V_\CC)$ of $\SLZ$ associated with $\CC$. Let $(s,t)$ denote the corresponding normalized modular data $(s,t)$ of $\CC$.  By Theorem \ref{thm:tran_irr},  $(\rho, V_\CC)$  is minimal and irreducible. In view of Lemma  \ref{l:realizing_min}, there exists a modular category $\DD=\BB_{p-2, l}$ for some $l \in \UZ{2p}$ and a level $p$ representation $(\rho', V_\DD)$ associated with $\DD$ such that
$$
(\rho, V_\CC)\cong (\rho', V_\DD)\,.
$$
Let $(s',t')$ be the normalized modular data of $\DD$ corresponding to $(\rho', V_\DD)$. Recall that $\irr(\DD)=\{V_d \mid d \in D\}$ where $D=\{2j\mid 0 \le j \le (p-3)/2\}$. We simply write $e_a$ for the basis element $e_{V_a}$ for $V_\DD$, and the entry $\rho'(\fa)_{V_a, V_b}$ as $\rho'(\fa)_{a,b}$ for any $\fa \in \SLZ$. As in the proof of Theorem \ref{thm:ordT-prime}, we have a bijection  $\Phi: D \to \irr(\CC)$ by comparing the eigenvalues of the images of $\ft$: for $a \in D$, we define 
$\Phi(a) := X \in \irr(\CC)$ if $\rho(\ft)_{X, X} = \rho'(\ft)_{a,a}$.

To simplify notations, we denote $s_{\Phi(a), \Phi(b)}$ by $s_{a,b}$ for any $a,b \in D$. By \cite[Lem.~3.17]{BNRWClassificationByRank}, there exists a diagonal matrix $U$, indexed by $D$,  of order at most 2  such that 
 $$
 s = U s' U\,.
$$
Let $x=\Phi^{-1}(\1)$. Then for any $a, b, c \in D$, the Verlinde formula yields the equations
$$
N_{\Phi(a), \Phi(b)}^{\Phi(c)}
=
\sum_{j \in D}
\frac{s_{a, j}s_{b, j}\ol{s_{c, j}}}{s_{x, j}}
=
\frac{U_{a,a}U_{b,b}U_{c,c}}{U_{x,x}}
\sum_{j \in D}
\frac{s'_{a, j}s'_{b, j}\ol{s'_{c, j}}}{s'_{x, j}}\,.
$$
 Since $\DD$ is transitive, there exists $\s \in \gal(\BQ_p/\BQ)$ such that $\hs(V_0) =V_x$ and we simply write $\hs(0)=x$. Applying $\s$ to the preceding equation, we find
 $$
\begin{aligned}
N_{\Phi(a), \Phi(b)}^{\Phi(c)}
&=
\frac{U_{a,a}U_{b,b}U_{c,c}}{U_{x,x}}
\sum_{j \in D}
\frac{\e_\s(a)\e_\s(b)\e_\s(c)}{ \e_\s(x)} \frac{s'_{\hs(a), j}s'_{\hs(b), j}\ol{s'_{\hs(c), j}}}{s'_{0, j}}\\
&=\pm N_{V_{\hs(a)}, V_{\hs(b)}}^{V_{\hs(c)}}\,.
\end{aligned}
$$
Since the fusion coefficients  $N_{\Phi(a), \Phi(b)}^{\Phi(c)}$ and $N_{V_{\hs(a)}, V_{\hs(b)}}^{V_{\hs(c)}} $ are nonnegative,  we have $N_{\Phi(a), \Phi(b)}^{\Phi(c)} = N_{V_{\hs(a)}, V_{\hs(b)}}^{V_{\hs(c)}} $ for all $a, b, c \in D$. Therefore, the assignment 
$$\Phi(a) \mapsto V_{\hs(a)}, \quad \text{for }a \in D,$$
defines a $\BZ_+$-based ring isomorphism between $K_0(\CC)$ and $K_0(\DD)$. By Lemma \ref{lem:FK-so3}, $\CC$ is equivalent to $\BB_{p-2,l}$ as modular categories for some $l \in \UZ{2p}$.

The second statement is an immediate consequence of Lemma \ref{lem:FK-so3} and Proposition \ref{p:B-trans-prime}.
\end{proof}

Finally, we establish the complete classification of nontrivial transitive modular categories.

\begin{thm}\label{thm:main}
Let $\CC$ be a nontrivial modular category. Then $\CC$ is transitive if and only if $\CC$ is equivalent to a Deligne product $\boxtimes_{a = 1}^{\ell}\BB_{p_{a}-2, l_{a}}$ as modular categories for some distinct primes $p_1, \dots, p_\ell > 3$ and $l_a \in \UZ{2p_a}$ . 
\end{thm}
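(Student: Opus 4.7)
The plan is to prove both implications as essentially direct corollaries of the results already established in the paper, with Theorem \ref{thm:prime-decomp} and Theorem \ref{thm:prime-transitive} doing most of the work.

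For the ``if'' direction, suppose $\CC \simeq \boxtimes_{a=1}^{\ell} \BB_{p_a-2, l_a}$ for distinct primes $p_a > 3$ and $l_a \in \UZ{2p_a}$. Transitivity follows immediately from Proposition \ref{p:prod_A}, which was proved precisely to handle this situation.

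For the ``only if'' direction, assume $\CC$ is nontrivial and transitive. By Theorem \ref{thm:prime-decomp}, $\CC$ admits a prime factorization $\CC \simeq \CC_1 \boxtimes \cdots \boxtimes \CC_\ell$, unique up to permutation of factors, and each $\CC_i$ is itself transitive (being a fusion subcategory of $\CC$). Since $\CC$ is nontrivial, each $\CC_i$ may be taken to be nontrivial. Invoking Theorem \ref{thm:prime-transitive}, each $\CC_i$ is then equivalent to $\BB_{p_i-2, l_i}$ for some prime $p_i > 3$ and some $l_i \in \UZ{2p_i}$.

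The remaining step is to verify that the primes $p_1, \dots, p_\ell$ are pairwise distinct, which I anticipate to be the only genuine obstacle. Suppose for contradiction that $p_i = p_j = p$ for some $i \neq j$. By Lemma \ref{lem:FK-so3}, all the categories $\BB_{p-2, l}$ with $l \in \UZ{2p}$ are Galois conjugates of $\BB_{p-2, 1}$, so $\CC_i$ and $\CC_j$ are nontrivial Galois-conjugate modular categories. Then by the last assertion of Proposition \ref{prop:trans-prod}, $\CC_i \boxtimes \CC_j$ fails to be transitive. On the other hand, $\CC_i \boxtimes \CC_j$ is a fusion subcategory of $\CC$ and hence must be transitive by Theorem \ref{thm:prime-decomp}(i), a contradiction. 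Therefore all the $p_i$ are distinct, completing the classification.
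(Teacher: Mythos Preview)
Your proof is correct and follows the same overall strategy as the paper: the ``if'' direction is exactly Proposition \ref{p:prod_A}, and the ``only if'' direction combines the prime factorization of Theorem \ref{thm:prime-decomp} with the classification of prime transitive factors in Theorem \ref{thm:prime-transitive}. The one place where you differ is in establishing that the primes $p_i$ are pairwise distinct: the paper first invokes Theorem \ref{thm:tran_irr} to read off the distinct prime factors of $\ord(T_\CC)$ and then matches them with the Deligne factors, whereas you bypass $\ord(T_\CC)$ entirely and argue directly via Lemma \ref{lem:FK-so3} and Proposition \ref{prop:trans-prod} that a repeated prime would force a non-transitive subcategory $\CC_i\boxtimes\CC_j$, contradicting Theorem \ref{thm:prime-decomp}(i). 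Your route is self-contained and arguably makes the distinctness step more transparent than the paper's somewhat elliptic phrasing.
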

\begin{proof}
If $\CC$ is transitive, then by Theorem \ref{thm:tran_irr}, $\ord(T_{\CC})=p_1\cdots p_\ell$ for some distinct primes $p_1, \dots, p_\ell > 3$. It follows from Theorem \ref{thm:prime-decomp}, $\CC$ can be uniquely factorized into a Deligne product of transitive prime modular categories up to the ordering of the factors. Therefore, by Theorem \ref{thm:prime-transitive}, $\CC$ is equivalent to $\boxtimes_{a = 1}^{\ell}\BB_{p_{a}-2, l_{a}}$ as modular categories for some $l_a \in \UZ{2p_a}$.

The converse of the statement follows directly from Proposition \ref{p:prod_A}.
\end{proof}

In view of Theorem \ref{thm:main}, nontrivial transitive modular categories $\CC$ up to equivalence are uniquely parameterized by a pair $(n,l)$ in which $n=\ord(T_\CC)$ is a square-free integer relatively prime to $6$ and $l$ is a congruence class in $\UZ{2n}$, which can be determined by the anomaly $\a_1(\CC)$.

\section{Transitivity of super-modular categories}\label{sec:super}
In this section, we investigate super-modular categories with transitive Galois actions. We first recall the definition of super-modular categories and the Galois group actions on their \emph{reduced} S-matrices.

The tensor category of $\Zn{2}$-graded finite-dimensional vector spaces over $\BC$ equipped with the super braiding is denoted by $\svec$. This braided fusion category $\svec$ is  symmetric and it can be endowed with two inequivalent spherical structures. The nontrivial simple object $f \in \svec$ is a \emph{fermion} that means $f \o f \cong \1$ and $\beta_{f,f} = -\id_{f\otimes f}$. The two inequivalent spherical structures on $\svec$ are distinguished by $d_f = \pm 1$. The corresponding premodular categories are respectively denoted by $\svec_\e$ with $d_f = \e$.

A premodular category $\CC$ is called \emph{super-modular} or \emph{super-modular category over $\svec_\e$} if $\CC'$ is equivalent to $\svec_\e$ as premodular categories for some $\e=\pm 1$. Let $f$ be the transparent fermion of $\CC$. Then, for any $X \in \irr(\CC)$, we have $d_{X\otimes f} = \e d_X$ and $\theta_{X\otimes f} = -\e\theta_X$ by the twist equation \eqref{eq:tw-def}.  Hence, $f \o X \not\cong X$. The transparent fermion $f \in \CC$ may also be denoted by $f_\CC$ if the context needs to be clarified.

The group $\irr(\CC') = \{\1, f\} \cong \Zn{2}$ acts on $\irr(\CC)$ by tensor product. We denote by $\ol X$  the $\Zn{2}$-orbit $\{X, f\o X\}$ of $\irr(\CC)$, and set of $\Zn{2}$-orbits  by $\ol{\irr(\CC)}$. By the above discussions, this $\Zn{2}$-action is fixed-point free, and so there exists a complete set of representatives $\Pi_\CC$ of $\ol{\irr(\CC)}$ such that $\1 \in \Pi_\CC$ and $\Pi_\CC$ is closed under taking duals. We call such a set $\Pi_\CC$ of simple objects of $\CC$ 
a \emph{basic subset of $\irr(\CC)$}, and 
we 
simply denote $\Pi_\CC$ by $\Pi$ when there is no ambiguity.  In general, $\irr(\CC) = \Pi \cup (f\otimes \Pi)$ and there is no canonical choice of $\Pi$ unless $\CC$ is a \emph{split} super-modular category, i.e., $\CC \simeq \DD\boxtimes\svec_\e$ as premodular categories for some modular category $\DD$ and some $\e \in\{\pm 1\}$. We call a super-modular category $\CC$ \emph{non-split} if $\CC$ is not a  split super-modular category.

With respect to the decomposition $\irr(\CC) = \Pi \cup (f\otimes \Pi)$, the  S-matrix of $\CC$ admits the block form
$$
S = 
\begin{pmatrix}
\hat{S} & d_f\hat{S}\\
d_f\hat{S} & \hat{S}
\end{pmatrix}\,,
$$
where $\hat{S}$ is a symmetric invertible matrix indexed by $\Pi$, called the \emph{reduced $S$-matrix} of $\CC$. The reduced S-matrix $\hat{S}$ of $\CC$ has the unitary normalization $\hat{s}=\frac{\sqrt{2}}{\sqrt{\dim(\CC)}} \hat{S}$ which satisfies a Verlinde-like formula 
\cite{NRWZ20a}. 
Since $\CC$ embeds into $Z(\CC)$ as a premodular 
subcategory, 
$S$ is defined over $\BQ_N$ where $N$ is the Frobenius-Schur exponent of $\CC$ or the order of the T-matrix of $Z(\CC)$ (cf. \cite{NS10}). The reduced S-matrix of $\CC$ will be denoted by $\hat{S}_\CC$ when the clarification is necessary.

It is immediate to see that $\BQ(\hat{S}) = \BQ(S) \subseteq \BQ_N$.  Similar to modular categories, we define  $G_\CC:=\gal(\BQ(S)/\BQ)$.  By \cite[Sec.\ 2.2]{NRWZ20a}, for any extension $E$ over $\BQ(S)$ and $\s \in \gal(E/\BQ)$, there exists a unique permutation $\hs$ on $\Pi$ satisfying 
\begin{equation}\label{eq:shat-action}
\s\left(
\frac{\hat{S}_{X, Y}}{\hat{S}_{\1, Y}}
\right)
=
\frac{\hat{S}_{X, \hs(Y)}}{\hat{S}_{\1, \hs(Y)}}
\end{equation}
for any $X, Y \in \Pi$ (see also \cite{BPRZ19a}). The permutation $\hs$ on $\Pi$ induces a permutation on $\ol{\irr(\CC)}$, namely $\hs(\ol X) := \ol {\hs(X)}$ for $X \in \Pi$, and we denote this permutation on  $\ol{\irr(\CC)}$ by the same notation $\hs$. This gives rise to an action of $\gal(E/\BQ)$ on $\ol{\irr(\CC)}$ by the restriction to $\BQ(S)$. In particular, $\GQ$ acts on $\ol{\irr(\CC)}$. Note that the action of $\GQ$ on $\ol{\irr(\CC)}$ is independent of the choices of $\Pi$.

Since the group homomorphism $\ \hat{\cdot} : G_\CC \to \Sym(\ol{\irr(\CC)})$ injective, we will identify $G_\CC$ with the image of $\ \hat\cdot\ $ as for modular categories. In other words, for any $\s \in \GQ$, we use $\hs$ to denote both the Galois automorphism on $\BQ(S)$ and the associated permutation on $\ol{\irr(\CC)}$. Again, we denote the $\GQ$-orbit of $\ol{\irr(\CC)}$  by $\Orb(\CC)$.

\begin{defn}
We call a super-modular category $\CC$ \emph{transitive} if the $\GQ$-action on $\ol{\irr(\CC)}$ is transitive.    
\end{defn}

We first derive some properties of the Galois actions on super-modular categories. The following lemma is an analog of \cite[Prop.~3.6]{RankFinite}.

\begin{lem}\label{lem:unit-dim}
Let $\CC$ be a super-modular category. Then for any $\s \in \GQ$, $d_{X}$ is a totally real algebraic unit for  $X \in \hs(\ol \1)$.
\end{lem}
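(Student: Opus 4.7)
The lemma is the super-modular counterpart of \cite[Prop.~3.6]{RankFinite}, and my plan is to adapt that proof, using the reduced S-matrix $\hat S$ in place of $S$ and invoking the super-Verlinde formula of \cite{NRWZ20a, BPRZ19a}.

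First, I dispatch total reality: every quantum dimension of a simple object in a spherical fusion category is a totally real algebraic integer by \cite{ENO}, so only the algebraic-unit property is at stake. Second, since $d_{f\otimes Y} = \e d_Y$ with $\e = d_f \in \{\pm 1\}$, the two members of $\hs(\ol\1)$ share the same property, so I may replace $X$ by the representative $Z := \hs(\1) \in \Pi$ and prove that $d_Z$ is an algebraic unit.

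For the core step, I use the character structure of $\hat S$. For each $Y \in \Pi$, the map $\chi_Y(X) := \hat S_{X,Y}/d_Y$ defined on $\Pi$ and extended to $\irr(\CC)$ by $\chi_Y(f\otimes X) := \e \chi_Y(X)$ is a ring homomorphism $K_0(\CC) \to \BC$ by the super-Verlinde formula, whence $\chi_Y(X)$ is an algebraic integer for all $X, Y \in \Pi$. Reading \eqref{eq:shat-action} in these terms gives $\s(\chi_\1) = \chi_Z$, i.e., $\s(d_X) = \chi_Z(X)$ for $X \in \Pi$. To obtain unit-ness of $d_Z$, I would argue, as in the modular case, that $d_Z$ is a root of a monic polynomial in $\BZ[T]$ whose constant term equals $\pm 1$; the latter is the (signed) determinant of the reduced fusion matrix of $Z$, which factors as a product of character values $\chi_Y(Z)$ and can be evaluated using the unitarity and symmetry of $\hat s = \sqrt{2/\dim(\CC)}\,\hat S$ together with $\chi_\1(\1) = 1$.

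The main obstacle will be tracking the $f$-contributions and the normalization $\sqrt{\dim(\CC)/2}$ cleanly through this determinant evaluation in the super-modular setting, which is more delicate than the modular case because of the identification in $\ol{\irr(\CC)}$. As a fallback, one may embed $\CC$ into a minimal modular extension $\tilde\CC$—existence of such extensions is known for super-modular categories in the relevant generality—so that \cite[Prop.~3.6]{RankFinite} applies to $\tilde\CC$ directly and the unit property transfers back to $\CC$ because the embedding preserves simple objects and quantum dimensions.
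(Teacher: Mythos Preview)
Your main approach is left unfinished: you correctly identify the obstacle (tracking $f$-contributions through a ``reduced fusion matrix'' and its determinant) but do not resolve it, so as written this is a plan rather than a proof. Your fallback also has a real gap: applying \cite[Prop.~3.6]{RankFinite} in a minimal modular extension $\tilde\CC$ gives that $d_W$ is a unit for $W:=\hs_{\tilde\CC}(\1)\in\irr(\tilde\CC)$, but you need $d_Z$ to be a unit for $Z:=\hs_\CC(\1)\in\Pi$. From the compatibility of the two Galois actions one only obtains $\chi_W|_{K_0(\CC)}=\chi_Z|_{K_0(\CC)}$, which does not force $W\in\{Z,f\otimes Z\}$; there are in general several simples of $\tilde\CC$ whose associated characters restrict to the same character on $K_0(\CC)$. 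So the transfer ``because the embedding preserves simple objects and quantum dimensions'' is not justified.

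The paper takes a much shorter and self-contained route that avoids both issues. From \eqref{eq:shat-action} one gets $\hat S_{X,\hs(\1)}=d_{\hs(\1)}\,\s(d_X)$ for all $X\in\Pi$; summing squares and using the unitarity of $\hat s$ (equivalently, this is \cite[Lem.~2.2]{NRWZ20a}) yields
\[
d_{\hs(\1)}^2=\frac{\dim(\CC)}{\s(\dim(\CC))}\,.
\]
The right-hand side has algebraic norm $1$, while $d_{\hs(\1)}$ is already a totally real algebraic integer by \cite{ENO}; hence $d_{\hs(\1)}^2$, and therefore $d_{\hs(\1)}$, is an algebraic unit. The other member $f\otimes\hs(\1)$ of $\hs(\ol\1)$ then follows from $d_{f\otimes\hs(\1)}^2=d_{\hs(\1)}^2$. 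This bypasses entirely the determinant computation you were worried about and makes no appeal to minimal modular extensions.
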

\begin{proof}
Let $\Pi$ be a basic subset of $\irr(\CC)$. By \cite[Lem.~2.2]{NRWZ20a}, for any $\s \in \GQ$, we have 
\begin{equation}\label{eq:2}
d_{\hs(\1)}^2 = \frac{\dim(\CC)}{\s(\dim(\CC))}\,. 
\end{equation}
Since $\frac{\dim(\CC)}{\s(\dim(\CC))}$ has algebraic norm 1, and  $d_{\hs(\1)}$ is a totally real algebraic integer (see \cite{ENO}),  $d_{\hs(\1)}$ is a a totally real algebraic unit.  Now the statement follows from the fact  that $\hs(\ol{\1}) = \{\hs(\1), f \o \hs(\1)\}$ and $d^2_{f \o \hs(\1)} = d^2_{\hs(\1)}$. 
\end{proof}

On split transitive super-modular categories, we begin with the following lemma.

\begin{lem}\label{l:split_super}
Let $\DD$ be a modular category. Then the split super-modular category $\CC = \DD\boxtimes\svec_\e$ for any $\e=\pm 1$ is transitive if and only if $\DD$ is transitive.
\end{lem}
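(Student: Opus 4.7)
The plan is to reduce the statement to an identification of the reduced $S$-matrix of $\CC$ with the $S$-matrix of $\DD$, after which the Galois groups and their actions on Galois orbits coincide on the nose.

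First I would describe the simple objects. Since $\CC = \DD \boxtimes \svec_\e$, we have $\irr(\CC) = \{X \boxtimes \1, \, X \boxtimes f \mid X \in \irr(\DD)\}$, where $f$ is the nontrivial simple object of $\svec_\e$. The transparent fermion is $f_\CC = \1_\DD \boxtimes f$, and the $\{\1, f_\CC\}$-action on $\irr(\CC)$ pairs $X \boxtimes \1$ with $X \boxtimes f$. Therefore $\Pi_\CC := \{X \boxtimes \1 \mid X \in \irr(\DD)\}$ is a basic subset of $\irr(\CC)$ containing $\1_\CC$ and closed under duals, and the assignment $\ol{X \boxtimes \1} \mapsto X$ defines a bijection $\Phi\colon \ol{\irr(\CC)} \to \irr(\DD)$.

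Next, I would compute the reduced $S$-matrix. The $S$-matrix of $\svec_\e$ is $\bigl(\begin{smallmatrix} 1 & \e \\ \e & 1 \end{smallmatrix}\bigr)$, and the $S$-matrix of a Deligne product is the Kronecker product of the factors, so ordering the simple objects of $\CC$ as $\Pi_\CC$ followed by $f_\CC \otimes \Pi_\CC$ yields
$$
S_\CC = \begin{pmatrix} S_\DD & \e\, S_\DD \\ \e\, S_\DD & S_\DD \end{pmatrix}.
$$
Comparing with the block form $\bigl(\begin{smallmatrix} \hat{S} & d_{f_\CC}\hat{S} \\ d_{f_\CC}\hat{S} & \hat{S} \end{smallmatrix}\bigr)$ recorded in Section \ref{sec:super}, and using $d_{f_\CC} = \e$, we conclude that $\hat{S}_\CC = S_\DD$. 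In particular $\BQ(\hat{S}_\CC) = \BQ(S_\DD)$ and therefore $G_\CC = G_\DD$.

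Finally, under the bijection $\Phi$, the defining relation \eqref{eq:shat-action} for the Galois action of $\s \in G_\CC$ on $\ol{\irr(\CC)}$ becomes exactly the Verlinde relation defining the $G_\DD$-action on $\irr(\DD)$ via the character $S_\DD(X,Y)/S_\DD(\1,Y)$. Hence $\Phi$ is equivariant for the two Galois actions, and so $G_\CC$ acts transitively on $\ol{\irr(\CC)}$ if and only if $G_\DD$ acts transitively on $\irr(\DD)$, i.e.\ $\CC$ is transitive if and only if $\DD$ is. There is no real obstacle in this argument, as everything is a direct identification; the only point requiring a brief check is that the transparent fermion of $\CC$ is unambiguously the factor $\1_\DD \boxtimes f$, which holds because $\CC$ is split.
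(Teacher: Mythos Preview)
Your proof is correct and follows essentially the same approach as the paper's: choose $\Pi = \irr(\DD)$ (identified with $\{X \boxtimes \1\}$) and observe that the $\GQ$-action on $\ol{\irr(\CC)}$ coincides with its action on $\irr(\DD)$. The paper states this in two lines, while you spell out the supporting computation $\hat{S}_\CC = S_\DD$ explicitly via the Kronecker product; this is a helpful elaboration but not a different method.
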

\begin{proof}
We can take $\Pi = \irr(\DD)$. The $\GQ$-action on $\ol{\irr(\CC)}$ is equivalent to its action on $\Pi$, which coincides with the $\GQ$-action on the modular category $\DD$. Therefore, the statement follows.
\end{proof}

Combining Lemma \ref{l:split_super} and Theorem \ref{thm:main}, we obtain the full classification of split transitive  super-modular categories. 

\begin{thm}\label{thm:split}
Let $\CC$ be a nontrivial split super-modular category. Then $\CC$ is transitive if and only if $\CC$ is equivalent to $\left(\boxtimes_{a = 1}^{\ell}\BB_{p_{a}-2, l_{a}}\right) \boxtimes \svec_\e$  as premodular categories for some $\e \in \{\pm 1\}$, distinct primes $p_1, \dots, p_\ell > 3$ and $(l_1, \dots, l_\ell) \in \UZ{2p_1}\times \cdots\times\UZ{2p_\ell} $. \qed
\end{thm}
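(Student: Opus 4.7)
The plan is to reduce Theorem \ref{thm:split} to the classification of transitive modular categories (Theorem \ref{thm:main}) via the splitting lemma (Lemma \ref{l:split_super}). Since the statement is an ``if and only if,'' I would prove the two directions separately, though both are short.

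First, for the forward direction, suppose $\CC$ is a nontrivial split transitive super-modular category. By the definition of split super-modular, there exist a modular category $\DD$ and a sign $\e \in \{\pm 1\}$ such that $\CC \simeq \DD \boxtimes \svec_\e$ as premodular categories. Apply Lemma \ref{l:split_super} to conclude that $\DD$ is a transitive modular category. Note that $\DD$ must be nontrivial; otherwise $\CC \simeq \svec_\e$, whose only basic subset is $\{\1\}$, so transitivity would be vacuous but $\CC$ would be trivial as a super-modular category in the sense that $\ol{\irr(\CC)}$ is a singleton --- I would need to check the convention used in the paper to see whether $\svec_\e$ itself is considered nontrivial or not, but in either case the argument proceeds by invoking Theorem \ref{thm:main} on $\DD$ (possibly allowing the empty Deligne product $\ell = 0$ when $\DD \simeq \Vec$). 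This gives $\DD \simeq \boxtimes_{a=1}^{\ell} \BB_{p_a-2, l_a}$ for distinct primes $p_a > 3$ and $l_a \in \UZ{2p_a}$, and hence the desired description of $\CC$.

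For the converse, assume $\CC \simeq \left(\boxtimes_{a=1}^{\ell} \BB_{p_a-2, l_a}\right) \boxtimes \svec_\e$. Set $\DD := \boxtimes_{a=1}^{\ell} \BB_{p_a-2, l_a}$. By Proposition \ref{p:prod_A}, $\DD$ is a transitive modular category. Lemma \ref{l:split_super} then immediately yields that $\CC = \DD \boxtimes \svec_\e$ is a transitive split super-modular category.

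I expect no real obstacle here: the theorem is a direct corollary of the combination of Lemma \ref{l:split_super} (which transfers transitivity between $\DD$ and $\DD \boxtimes \svec_\e$) with the classification Theorem \ref{thm:main}. The only small issue to verify is the nontriviality convention --- namely that $\CC$ being nontrivial matches up with the parameters in the stated form (including whether $\ell = 0$ is permitted so that $\CC \simeq \svec_\e$ is captured). Beyond that bookkeeping, there is nothing substantive to prove.
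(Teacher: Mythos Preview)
Your proposal is correct and matches the paper's approach exactly: the paper presents this theorem as an immediate consequence of Lemma~\ref{l:split_super} and Theorem~\ref{thm:main}, with no further argument. On your bookkeeping concern, the paper defines a super-modular category to be \emph{trivial} precisely when it is braided equivalent to $\svec$, so $\CC$ nontrivial forces $\DD$ nontrivial and hence $\ell \ge 1$.
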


Transitive super-modular categories have similar properties as transitive modular categories. For example, the following lemma is parallel to Proposition \ref{p:sizeofGalois}.

\begin{lem}
If $\CC$ is a transitive super-modular category, then we have $|G_\CC| = |\irr(\CC)|/2$.
\end{lem}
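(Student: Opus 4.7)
The plan is to adapt the proof of Proposition \ref{p:sizeofGalois} verbatim, replacing $\irr(\CC)$ with $\ol{\irr(\CC)}$. The essential inputs are already available in the super-modular setting: the group $G_\CC = \gal(\BQ(S)/\BQ)$ is abelian because $\BQ(S) \subseteq \BQ_N$ lies in a cyclotomic field, and the homomorphism $\ \hat\cdot\ : G_\CC \to \Sym(\ol{\irr(\CC)})$ constructed from \eqref{eq:shat-action} is injective (as noted in the paragraph immediately preceding the definition of transitivity).

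First I would observe that since $\CC$ is transitive, the image of this injection is an abelian transitive subgroup of $\Sym(\ol{\irr(\CC)})$. By \cite[Prop.~4.4]{Wiebook}, an abelian transitive permutation group is regular, so the $G_\CC$-action on $\ol{\irr(\CC)}$ is regular. In particular, $|G_\CC| = |\ol{\irr(\CC)}|$.

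Finally, since the $\Zn{2}$-action of $\irr(\CC') = \{\1, f\}$ on $\irr(\CC)$ by tensor product is fixed-point free (as recalled in the preliminaries: $f \o X \not\cong X$ for every simple $X$), each orbit in $\ol{\irr(\CC)}$ has exactly two elements, so $|\ol{\irr(\CC)}| = |\irr(\CC)|/2$. Combining the two equalities yields $|G_\CC| = |\irr(\CC)|/2$.

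There is no real obstacle here; the statement is a direct translation of Proposition \ref{p:sizeofGalois} to the super-modular setting, and the only mild point to be careful about is that the relevant $G_\CC$-set is $\ol{\irr(\CC)}$ rather than $\irr(\CC)$ itself, so the factor of two in $|\irr(\CC)|/2$ comes entirely from the transparent fermion pairing.
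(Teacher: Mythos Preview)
Your proof is correct and follows essentially the same approach as the paper: the paper's proof is a one-line invocation of regularity (``Since $G_\CC$ acts transitively on $\ol{\irr(\CC)}$, $G_\CC$ is regular and so $|G_\CC| = |\ol{\irr(\CC)}|= |\irr(\CC)|/2$''), and you have simply unpacked the same reasoning with explicit references to the abelianness of $G_\CC$, \cite[Prop.~4.4]{Wiebook}, and the fixed-point freeness of the fermion action.
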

\begin{proof}
Since $G_\CC$ acts transitively on $\ol{\irr(\CC)}$, $G_\CC$ is regular and so 
\begin{equation*}
|G_\CC| = |\ol{\irr(\CC)}|= |\irr(\CC)|/2. 
\qedhere
\end{equation*}
\end{proof}

Therefore, for any transitive super-modular category $\CC$ with a basic subset $\Pi$  of $\irr(\CC)$, we can identify $G_{\CC}$ with $\Pi$ via $\hs \mapsto \hs(\1)$. Under this identification,  we will simply denote $f \otimes\hs$ by $f\hs$ for any $\hs \in G_{\CC}$. Now, we can compare the following theorem to Lemma \ref{lem:1} and Theorem \ref{thm:1}.

\begin{thm}\label{thm:super}
Let $\CC$ be a transitive super-modular category with a basic subset $\Pi$  of $\irr(\CC)$. Let $\shat$ be the reduced S-matrix of $\CC$ indexed by $G_\CC$ according to the preceding identification of $G_\CC$ and $\Pi$. Then:
\begin{itemize}
\item[(i)] For any $\hs, \hm \in G_{\CC}$, we have $\shat_{\hs, \hm} = \hs(d_{\hm}) d_{\hs} = \hm(d_{\hs})d_{\hm}$. In particular, all entries of $\shat$ and $S$ are totally real algebraic units. 
\item[(ii)] For any $\hs, \hm \in G_{\CC}$, if $\hs \ne \hm$, then $d_{\hm}^2 \ne d_{\hm}^2$. In particular, if $\hm \ne \1$, then $d_{\hm}^2 \ne 1$ and $\hm(\dim(\CC)) \ne \dim(\CC)$. 
\item[(iii)] For any $X \in \irr(\CC)$, if $d_X^2 \in \BZ$, then $ X \in \{\1, f\}$. For any fusion subcategory $\DD \subset \CC$, if $f \in \DD$, then $\DD$ is a super-modular category, otherwise, $\DD$ is a modular category. In particular, $\CC$ has no nontrivial Tannakian subcategory.
\item[(iv)] $\BQ(\shat) = \BQ(\dim(\CC)) = \BQ(d_\hs \mid \hs \in G_{\CC})$.
\end{itemize}
\end{thm}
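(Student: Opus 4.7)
For part (i), I would specialize the defining Galois relation \eqref{eq:shat-action} at $Y=\1$: since $\hat{S}_{\1,\1}=1$ and $\hat{S}_{X,\1}=d_X$, this yields $\hat{S}_{X,\hs(\1)} = \hs(d_X)\cdot d_{\hs(\1)}$ for every $X \in \Pi$ and $\hs \in G_\CC$. Under the identification $G_\CC \cong \Pi$ via $\hs \mapsto \hs(\1)$, this is the first equality of (i), and the second equality follows by symmetry of $\hat{S}$. Lemma \ref{lem:unit-dim} guarantees that each $d_\hs$ is a totally real algebraic unit, so the same is true of $\hs(d_\hm)$ and therefore of $\hat{S}_{\hs,\hm}$; every entry of $S$, being $\pm\hat{S}_{\hs,\hm}$, is then also a totally real algebraic unit.

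For part (ii), my plan mirrors the proof of Theorem \ref{thm:1}(i): if $d_\hs^2 = d_\hm^2$ for distinct $\hs,\hm\in G_\CC$, then $d_\hs = \e\, d_\hm$ with $\e=\pm 1$, and (i) gives $\hat{S}_{\hs,\hld}=\e\, \hat{S}_{\hm,\hld}$ for every $\hld\in G_\CC$, contradicting invertibility of the reduced S-matrix (cf.\ \cite{NRWZ20a}). Since $d_\1=1$, this specializes to $d_\hm^2\neq 1$ for $\hm\neq\hat\id$. For the statement about $\dim(\CC)$, I would invoke the identity $d_{\hs(\1)}^2 = \dim(\CC)/\s(\dim(\CC))$ established in the proof of Lemma \ref{lem:unit-dim}: applied with $\s=\hm$, this forces $\hm(\dim(\CC))=\dim(\CC)/d_\hm^2\neq\dim(\CC)$.

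Part (iii) is the most delicate and contains the main obstacle. The scalar claim follows at once from (i) and (ii): $d_X^2$ is a totally real positive algebraic integer that is also a unit, hence equals $1$; by transitivity, $X=\hs(\1)$ or $X=f\otimes\hs(\1)$ for some $\hs\in G_\CC$, so (ii) forces $\hs=\hat\id$ and $X\in\{\1,f\}$. For the fusion-subcategory assertion, I would study the M\"uger center $\DD'=C_\DD(\DD)$, a symmetric fusion subcategory of $\CC$. By Deligne's theorem $\DD'$ is (super-)Tannakian, hence weakly integral, and applying \cite[Prop.~8.27]{ENO} as in the proof of Corollary \ref{cor:int-cat} shows that every simple $X\in\DD'$ satisfies $d_X^2\in\BZ$; the scalar claim then forces $\irr(\DD')\subseteq\{\1,f\}$. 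Thus $\DD'=\Vec$ (so $\DD$ is modular) or $\DD'=\svec$, in which case $f\in\DD'\subseteq\DD$; conversely, if $f\in\DD$, then $\svec=\CC'\cap\DD\subseteq\DD'$, forcing $\DD'=\svec$, so $\DD$ is super-modular. The ``no nontrivial Tannakian subcategory'' statement is immediate: a Tannakian subcategory is $f$-free, hence modular by the above, and a symmetric modular category is $\Vec$.

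Finally, part (iv) is a routine Galois-theoretic consequence of (ii): $\BQ(\dim(\CC))$ is a subfield of the abelian extension $\BQ(\hat{S})\subseteq\BQ_N$ and is thus Galois over $\BQ$, with Galois group $G_\CC/\stab_{G_\CC}(\dim(\CC))$; by (ii) this stabilizer is trivial, so $\BQ(\dim(\CC))=\BQ(\hat{S})$. Since $\dim(\CC)=2\sum_{\hs\in G_\CC} d_\hs^2$ and $d_\hs=\hat{S}_{\hs,\1}$, we have the chain $\BQ(\dim(\CC))\subseteq\BQ(d_\hs\mid \hs\in G_\CC)\subseteq\BQ(\hat{S})$, and all three fields must coincide. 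The main technical hurdle is the step in (iii) ensuring that $d_X^2\in\BZ$ for simples of $\DD'$ with respect to the spherical structure inherited from $\CC$, rather than relative to any canonical structure on $\Rep(G)$ or $\Rep(G,z)$.
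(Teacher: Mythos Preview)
Your proof is essentially correct and follows the same approach as the paper: parts (i), (ii), and (iv) match the paper's argument (which simply refers back to Lemma~\ref{lem:1} and Theorem~\ref{thm:1} with $\hat S$ in place of $S$ and \eqref{eq:2} in place of \eqref{eq:1}), and your treatment of the scalar claim and the Tannakian consequence in (iii) is the same as the paper's.

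There is one genuine wrinkle in part (iii), and you have correctly identified it. Your suggested route ``as in the proof of Corollary~\ref{cor:int-cat}'' does not literally carry over: that argument uses the Frobenius--Perron realizing element $\hs_0$ of Corollary~\ref{cor:FP}, whose existence is proved only for modular categories. In the super-modular setting the characters $\chi_Y(X)=S_{X,Y}/d_Y$ need not exhaust the characters of $K_0(\CC)$ (already for $\svec_{-1}$ the Frobenius--Perron character is not among them), so there is no reason a priori that some $\hs_0\in G_\CC$ sends $d_X$ to $\FPdim(X)$. The paper avoids this entirely: it simply asserts that $d_X^2\in\BZ$ for every simple $X$ in the symmetric fusion subcategory $\DD'$. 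The justification you are missing, and which resolves the concern you flag at the end, is that $d_X^2$ coincides with the squared norm $|X|^2$ of \cite{ENO}, an invariant of the underlying fusion category independent of the spherical structure; since by Deligne $\DD'\simeq\Rep(G,z)$ as fusion categories and the canonical spherical structure there gives $d_X=\pm\dim_\BC(X)\in\BZ$, one gets $d_X^2=|X|^2\in\BZ$ for \emph{any} spherical structure on $\DD'$, in particular the one inherited from $\CC$. With this in hand your argument for (iii) goes through exactly as written.
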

\begin{proof}
The first equality of statement (i) follows from \eqref{eq:shat-action} by setting $X = \hm$, $Y = \1$, and the second equality follows from the fact that $\shat$ is symmetric. Consequently, by Lemma \ref{lem:unit-dim}, all entries of $\shat$ and $S$ are totally real algebraic  units.

Now we have (i) and \eqref{eq:2}, the proof of (ii) and (iv) are the same as that of Theorem \ref{thm:1} (i) and (iii) by replacing $S$ by $\shat$. 

For statement (iii), assume $X \in \irr(\CC)$ satisfies $d_{X}^2 \in \BZ$. By the above discussions, since $\CC$ is transitive, there exists $\hs\in G_{\CC}$ such that $X = \hs$ or $X = f\hs$. In either case, we have $d_{X}^2 = d_{\hs}^2 \in \BZ$. By Lemma \ref{lem:unit-dim}, $d_{\hs}$ is a real algebraic unit, so $d_X^2 = d_{\hs}^2 = 1$. Consequently, by (ii), we have $\hs = \1$. Therefore, $X = \1$ or $X = f$. 

Let $\DD\subset\CC$ be any fusion subcategory. Then  $\DD$ is a premodular subcategory of $\CC$. Since the M\"uger center $\DD'$ of $\DD$ is a symmetric fusion subcategory of $\CC$, we have $d_X^2 \in \BZ$ for any $X \in \DD'$. Therefore, we have $\irr(\DD') \subset \{\1, f\}$ and hence  $\DD$ is super-modular (resp.~modular) if and only if $f \in \DD$ (resp.~$f \not\in \DD$). Finally, if  $\DD$ is a  Tannakian subcategory of $\CC$, then $f \notin \DD$ and so $\DD$ is modular. Therefore, $\DD$ braided equivalent to $\Vec{}$, and this completes the proof of the theorem.
\end{proof}

By definition, a super-modular category over $\svec_\e$ for some $\e=\pm 1$ is a nondegenerate braided fusion category over $\svec$ according to \cite{DNO}. Therefore, if  $\AA$ and $\B$ are super-modular categories over $\svec_\e$, then their tensor product $\AA \btsvec \B = (\AA \bot \B)_A$ is a nondegenerate braided fusion category over $\svec$, where $A = \1_\AA \bot \1_\B \oplus f_\AA \bot f_\B$ is a connected \'etale algebra in $\AA \bot\B$. It is immediate to see that $\dim(A)=2$ and $\theta_A = \id_A$ for any $\e = \pm 1$. Therefore, $\AA \btsvec \B$ admits a spherical structure inherited from $\AA \bot \B$ by \cite{KO02}
, which implies that 
$\AA \btsvec \B$ is a super-modular category.

Consider the forgetful functor $G: \AA \btsvec \B \to \AA \bot \B$, and the free-module functor $F:\AA \bot \B \to \AA \btsvec \B$ defined by $F(X \bot Y) = (X \bot Y) \o A$ for $X \in \AA, Y \in \B$. According to \cite{KO02}, $F$ is a surjective tensor functor, and $G$ is right adjoint to $F$. Let $\dim_A(M)$ denote the categorical dimension of any object $M \in \AA \btsvec \B$. We have
\begin{equation*}
\dim_A(F(X \bot Y)) = \dim_{\AA \bot \B}(X \bot Y) = \dim_\AA(X)\cdot \dim_\B(Y)
\end{equation*}
for any $X \in \AA$ and $Y \in \B$.
Therefore, $F: \AA \bot \B \to \AA \btsvec \B$ preserves the spherical structures (cf.  \cite{NSind}).

Since $f_\AA \bot f_\B$ acts freely on $\irr(\AA \bot \B)$, $F(X \bot Y)$ is simple  
for any $X \in \irr(\AA)$ and $Y \in \irr(\B)$. The transparent fermion of $\AA \btsvec \B$ is given by
\begin{equation} \label{eq:F_fermion}
F(f_\AA \bot \1_\B) \cong f_\AA \bot \1_\B \oplus \1_\AA \bot f_\B \cong F(\1_\AA \bot f_\B)
\end{equation}
and 
$$
\dim_A(F(f_\AA \bot \1_\B)) = \dim_\AA(f_\AA) = \e\,.
$$
Therefore, $\AA \btsvec \B$ is a super-modular category over $\svec_\e$. This proves the first statement of the following lemma.

\begin{lem}\label{l:adm}
Let $\AA$ and $\B$ be super-modular categories over $\svec_\e$  for some $\e\in\{\pm 1\}$. Then: 
\begin{itemize}
\item[(i)] $\CC:=\AA \btsvec \B$ is a super-modular category over $\svec_\e$, 
$$
\irr(\CC)=\{F(X \bot Y)\mid (X, Y) \in\irr(\AA) \times \irr(\B)\}\,,
$$
and
$$
\dim_A(F(X \bot Y)) = d_X d_Y
$$
for any $X \in \AA$ and $Y \in \B$.
\item[(ii)] Let $\Pi_\AA$ and $\Pi_\B$ be basic subsets of  $\irr(\AA)$ and $\irr(\B)$ respectively. Then
$$
\Pi_{\CC} = \{F(X \bot Y)\mid (X,Y) \in \Pi_\AA \times \Pi_\B\}
$$
is a basic subset of $\irr(\CC)$. Moreover, the corresponding reduced S-matrix $\shat_\CC$ of $\CC$ is given by the Kronecker product $\shat_\CC=\shat_\AA \o \shat_\B$.
\end{itemize}
\end{lem}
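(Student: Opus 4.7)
The first part of (i), that $\CC = \AA \btsvec \B$ is super-modular over $\svec_\e$, is already established in the paragraph preceding the lemma, so the plan is to supply proofs for the remaining statements of (i) and for both assertions in (ii).

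For (i), I would first show that $F(X_1 \bot X_2)$ is simple for every $(X_1,X_2) \in \irr(\AA) \times \irr(\B)$. By Frobenius reciprocity for the adjoint pair $(F,G)$,
$$
\End_{\CC}(F(X_1 \bot X_2)) \cong \Hom_{\AA \bot \B}\bigl(X_1 \bot X_2,\, (X_1 \bot X_2) \o A\bigr).
$$
Decomposing $A = (\1_\AA \bot \1_\B) \oplus (f_\AA \bot f_\B)$, the right-hand side splits as $\End_{\AA \bot \B}(X_1 \bot X_2) \oplus \Hom_{\AA \bot \B}(X_1 \bot X_2, f_\AA X_1 \bot f_\B X_2)$; the second summand vanishes because $f_\AA X_1 \not\cong X_1$ in the super-modular $\AA$, leaving a one-dimensional endomorphism algebra. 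Surjectivity of $F$ then forces every simple of $\CC$ to arise this way, and the same Hom computation with $Y_1 \bot Y_2$ in place of $X_1 \bot X_2$ in the second variable shows $F(X_1 \bot X_2) \cong F(Y_1 \bot Y_2)$ iff the two pairs lie in the same $\langle f_\AA \bot f_\B \rangle$-orbit. The dimension formula $\dim_A F(X_1 \bot X_2) = d_{X_1} d_{X_2}$ then follows from the fact that $F$ preserves categorical dimensions, as recorded before the lemma.

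For the first assertion of (ii), $\Pi_\CC$ clearly contains $\1_\CC = F(\1_\AA \bot \1_\B)$, and since $F$ commutes with duality and $\Pi_\AA, \Pi_\B$ are closed under duals, so is $\Pi_\CC$. Using $f_\CC \cong F(f_\AA \bot \1_\B)$ from \eqref{eq:F_fermion} and the monoidality of $F$,
$$
f_\CC \o F(X_1 \bot X_2) \cong F(f_\AA X_1 \bot X_2),
$$
which is distinct from $F(X_1 \bot X_2)$ by (i). The count $|\Pi_\AA \times \Pi_\B| = \tfrac14|\irr(\AA)|\cdot|\irr(\B)| = |\ol{\irr(\CC)}|$, together with injectivity of $(X_1,X_2) \mapsto F(X_1 \bot X_2)$ modulo $\langle f_\CC\rangle$ (which follows from the analysis of orbits in (i)), shows that $\Pi_\CC$ is a complete set of $\langle f_\CC\rangle$-orbit representatives.

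For the reduced S-matrix formula, the central identification is $F(U) \o_A F(V) \cong F(U \o V)$ for free $A$-modules, under which the $\CC$-braiding $\beta^{\CC}_{F(U),F(V)}$ corresponds to $\beta^{\AA \bot \B}_{U,V} \o \id_A$ on $(V \o U) \o A$ (using that $A$ is \'etale with symmetric self-braiding, so the $A$-strands pass through trivially). Combining this with the rescalings $\tr_{\CC}(h) = \dim(A)^{-1} \tr_{\AA \bot \B}(h)$ for $A$-module endomorphisms $h$ viewed via the forgetful functor, and $\tr_{\AA \bot \B}(g \o \id_A) = \dim(A)\cdot\tr_{\AA \bot \B}(g)$, I obtain
$$
\hat{S}_{\CC}(F(U),F(V)) = \tr_{\CC}\bigl(\beta^{\CC}_{F(V),F(U)^*}\beta^{\CC}_{F(U)^*,F(V)}\bigr) = S_{\AA \bot \B}(U,V)
$$
for $U,V \in \irr(\AA \bot \B)$. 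Specializing to $U = X_1 \bot X_2$ and $V = Y_1 \bot Y_2$ with $X_i, Y_i \in \Pi_i$, and using $S_{\AA \bot \B} = S_\AA \o S_\B$ together with $S_\AA|_{\Pi_\AA \times \Pi_\AA} = \hat{S}_\AA$ and $S_\B|_{\Pi_\B \times \Pi_\B} = \hat{S}_\B$, yields the Kronecker product $\hat{S}_\CC = \hat{S}_\AA \o \hat{S}_\B$. The hard part will be precisely this last step: distinguishing $\o$ from $\o_A$ when transporting the braiding to free modules, and tracking the factors of $\dim(A)=2$ from the two trace rescalings so that they cancel exactly---mishandling either would produce a spurious factor of $2$ in the final identity.
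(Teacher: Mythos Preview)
Your treatment of (i) and the first half of (ii) matches the paper's argument (the paper phrases simplicity via the computation of $GF(X\bot Y)$ rather than Frobenius reciprocity, but these are the same). The difference lies in the reduced $S$-matrix formula. The paper invokes \cite[Thm.~4.1]{KO02} directly, obtaining
\[
\dim(A)\,(S_\CC)_{F(X\bot Y),F(X'\bot Y')} = (S_{\AA\bot\B})_{X\bot Y,\,X'\bot Y'} + (S_{\AA\bot\B})_{X\bot Y,\,(f_\AA X')\bot(f_\B Y')},
\]
and then observes that the second summand equals the first because $f_\AA,f_\B$ are transparent and $d_{f_\AA}d_{f_\B}=\e^2=1$; dividing by $\dim(A)=2$ gives the result. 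Your route instead rederives the identity from scratch by transporting the braiding along $F$ and cancelling the two factors of $\dim(A)$ in the trace. This is perfectly valid and more self-contained, but one point in your justification deserves sharpening: the reason the $\CC$-braiding on free modules becomes $\beta^{\AA\bot\B}_{U,V}\otimes\id_A$ is not merely that $A$ is \'etale with symmetric self-braiding, but that $A$ lies in the M\"uger center of $\AA\bot\B$ (since $f_\AA\in\AA'$ and $f_\B\in\B'$), so $A$ braids trivially with \emph{every} object and $F$ is a braided tensor functor on all of $\AA\bot\B$. With that amendment your ``hard part'' disappears: the cancellation of the $\dim(A)$ factors is then automatic, and no spurious factor of $2$ can arise. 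The paper's approach trades this bookkeeping for a black-box citation; yours makes the mechanism explicit.
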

\begin{proof} We continue the preceding discussions
to prove
(ii). For any $X \in \irr(\AA)$ and $Y \in \irr(\B)$, we have 
$$
GF(X \bot Y) \cong X \bot Y \oplus (X \o f_\AA)  \bot (Y\o f_\B)\,.
$$
Therefore, for any $(X, Y)\ne (X', Y') \in\irr(\AA) \times \irr(\B)$, 
$$
F(X \bot Y)\cong F(X' \bot Y')\ \text{ if and only if }\ X' \bot Y' \cong (X \o f_\AA)  \bot (Y\o f_\B)\,.
$$
For any $(X, Y), (X', Y') \in \Pi_\AA \times \Pi_\B$, $X' \bot Y' \not\cong (X \o f_\AA)\bot (Y\o f_\B)$ by the definition of a basic subset.  Since $F$ is a tensor functor, $\1_\CC \in \Pi_\CC$ and $\Pi_\CC$ is closed under taking dual. It follows from \eqref{eq:F_fermion} that
$$
\irr(\CC) = \Pi_\CC \cup (f_\CC \o \Pi_\CC)
$$
where $f_\CC = F(\1_\AA \bot f_\B)$\,. Therefore, $\Pi_\CC$ is a basic subset  of $\irr(\CC)$.

By \cite[Thm. 4.1]{KO02}, for any $(X,Y), (X',Y') \in \Pi_\AA \times \Pi_\B$,
\begin{eqnarray*}
\dim(A) (S_\CC)_{\ul{X \bot Y}, \ul{X' \bot Y'}} & = &  (S_{\AA \bot \B})_{X \bot Y, X' \bot Y'} + (S_{\AA \bot \B})_{X \bot Y, (f_\AA \o X') \bot (f_\B\o Y')} \\
& = & 2 (S_{\AA})_{X,X'} (S_\B)_{X',Y'} 
\end{eqnarray*}
where $\ul{X \bot Y} = F(X \bot Y)$. Since $\dim(A)=2$, we have $$(S_\CC)_{\ul{X \bot Y}, \ul{X' \bot Y'}} = (S_{\AA})_{X,X'} (S_\B)_{X',Y'},$$ 
which is equivalent to $\hat{S}_{\CC} = \hat{S}_{\AA} \otimes \hat{S}_{\B}$.
\end{proof}

Recall the definition of the fiber product in Section \ref{s:galois}. 

\begin{cor}\label{cor:sup-orb}
Let $\AA$, $\B$ be super-modular categories over $\svec_\e$ for some $\e = \pm 1$ with basic subsets $\Pi_\AA$ and $\Pi_\B$ of simple objects of $\AA$ and $\B$ respectively. Let  $\CC :=\AA\btsvec\B $  and $\BF = \BQ(S_\AA)\cap \BQ(S_\B)$.  Then:
\begin{itemize}
\item[(i)] 
The map $g: G_{\CC} \to G_{\AA}\bullet G_{\B}$, $g(\hs_{\CC}) = (\hs_{\AA}, \hs_{\B})$, defines an isomorphism of groups, and 
\begin{equation*}
|G_{\CC}| = \frac{|G_{\AA}| \cdot |G_{\B}|}{[\BF:\BQ]}\,.    
\end{equation*}
\item[(ii)] 
For any $\s \in \GQ$, $X \in \Pi_\AA$ and $Y \in \Pi_\B$, we have
$$
\hs_{\CC}(F(X \bot Y)) = F(\hs_\AA(X)\bot \hs_{\B}(Y))\,.
$$
\item[(iii)] 
$|\Orb(\AA)|\cdot|\Orb(\B)| \le |\Orb(\CC)| \le |\Orb(\AA)|\cdot|\Orb(\B)|\cdot [\BF:\BQ]$.
\item [(iv)]
If $\AA$ and $\B$ are transitive, then 
$$
|\Orb(\CC)| 
= 
\frac{|G_{\AA}|\cdot |G_{\B}|}{|G_{\CC}|}
=
[\BQ(\dim(\AA)) \cap \BQ(\dim(\B)): \BQ]\,.
$$
\end{itemize}
\end{cor}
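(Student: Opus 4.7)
The plan is to mirror the proof of Lemma \ref{lem:gan-on-bot} and Proposition \ref{prop:trans-prod}, substituting the reduced S-matrix $\shat$ for the S-matrix throughout, with the key computational input being Lemma \ref{l:adm}(ii).

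First, I would establish (ii), which drives the rest. By Lemma \ref{l:adm}(ii), $\shat_\CC = \shat_\AA \otimes \shat_\B$ indexed by $\Pi_\AA \times \Pi_\B \cong \Pi_\CC$ via $F$. Consequently
$$
\BQ(S_\CC) \;=\; \BQ(\shat_\CC) \;=\; \BQ(\shat_\AA)\,\BQ(\shat_\B) \;=\; \BQ(S_\AA)\BQ(S_\B),
$$
the composite field. For any $\s \in \GQ$ and $X, X' \in \Pi_\AA$, $Y, Y' \in \Pi_\B$, the ratio
$$
\frac{\shat_{\CC,\,F(X\bot Y),\,F(X'\bot Y')}}{\shat_{\CC,\,\1,\,F(X'\bot Y')}}
\;=\; \frac{\shat_{\AA,X,X'}}{\shat_{\AA,\1,X'}}\cdot\frac{\shat_{\B,Y,Y'}}{\shat_{\B,\1,Y'}}
$$
is multiplicative in the two factors, and each factor lies in $\BQ(S_\AA)$, $\BQ(S_\B)$ respectively. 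Applying $\s$ and invoking \eqref{eq:shat-action} for $\AA$, $\B$, and $\CC$ separately, the uniqueness of the induced permutation forces $\hs_\CC(F(X\bot Y)) = F(\hs_\AA(X)\bot\hs_\B(Y))$, which is (ii).

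For (i), since $\BQ(S_\CC) = \BQ(S_\AA)\BQ(S_\B)$, the natural map $G_\CC \to G_\AA \times G_\B$ given by the pair of restrictions to $\BQ(S_\AA)$ and $\BQ(S_\B)$ factors through $G_\AA \bullet G_\B$ because both components restrict to the same element of $\gal(\BF/\BQ)$; by \cite[Prop.~14.4.21]{DumF} this map is an isomorphism, and \cite[Cor.~14.4.20]{DumF} gives the cardinality formula. For (iii), I would run Burnside's lemma on the action of $G_\CC$ on each product $O_\AA \times O_\B$ of orbits inside $\Pi_\AA \times \Pi_\B$, using (ii) to see $\stab_{G_\CC}(F(X\bot Y)) \subseteq \stab_{G_\AA}(X) \times \stab_{G_\B}(Y)$ and summing to get the bounds, exactly as in Lemma \ref{lem:gan-on-bot}(iii).

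For (iv), when $\AA, \B$ are transitive, Theorem \ref{thm:super} identifies $G_\AA$ with $\Pi_\AA$ and $G_\B$ with $\Pi_\B$, and the $G_\CC$-action on $\Pi_\AA \times \Pi_\B$ becomes left multiplication by the subgroup $G_\AA \bullet G_\B \subseteq G_\AA \times G_\B$; hence the number of orbits is precisely the index $[G_\AA \times G_\B : G_\AA \bullet G_\B] = |G_\AA|\cdot|G_\B|/|G_\CC| = [\BF:\BQ]$. Finally, Theorem \ref{thm:super}(iv) gives $\BQ(S_\AA) = \BQ(\dim(\AA))$ and $\BQ(S_\B) = \BQ(\dim(\B))$, so $\BF = \BQ(\dim(\AA)) \cap \BQ(\dim(\B))$. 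The only delicate point, which I expect to be the main bookkeeping obstacle, is confirming in (ii) that the permutation on $\Pi_\CC$ extracted from \eqref{eq:shat-action} really is the coordinatewise one; this is not automatic since $\shat_{\CC,\1,F(X'\bot Y')} = d_{X'}d_{Y'}$ is a product, but separating the two factors using transitivity of the equations in $X'$ and $Y'$ independently gives the needed identification.
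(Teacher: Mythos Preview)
Your proposal is correct and follows essentially the same route as the paper: invoke Lemma~\ref{l:adm}(ii) to get $\hat S_\CC = \hat S_\AA \otimes \hat S_\B$, then replay the arguments of Lemma~\ref{lem:gal-on-bot} and Proposition~\ref{prop:trans-prod} with $\Pi_\AA, \Pi_\B, \Pi_\CC$ in place of the full $\irr$ sets. Your write-up in fact supplies more detail than the paper's one-line proof; the only cosmetic issue is the typo \texttt{lem:gan-on-bot} for \texttt{lem:gal-on-bot}.
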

\begin{proof}
In view of Lemma \ref{l:adm}, by replacing $\irr(\AA), \irr(\B), \irr(\CC)$ respectively with $\Pi_\AA, \Pi_\B$, and the associated $\Pi_\CC$, the statements (i)-(iii) can be proved in the same way as Lemma \ref{lem:gal-on-bot}, and the proof of (iv) is similar to that of Proposition \ref{prop:trans-prod}.
\end{proof}

\begin{prop}\label{p:sph}
Let $\CC$ be a super-modular category over $\svec_\e$ for some $\e=\pm 1$.  If $\AA$ is a super-modular subcategory of $\CC$, then  both $\AA$ and its M\"uger centralizer $\B = C_{\CC}(\AA)$ are super-modular categories over $\svec_\e$, and there is an equivalence of premodular categories over $\svec$, 
\begin{equation}\label{eq:centralizer}
    \CC \simeq \AA \btsvec \B \,.
\end{equation}
\end{prop}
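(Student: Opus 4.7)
The plan is to first show that the super-modular structures on $\AA$ and $\B$ both refine to the same $\svec_\e \subseteq \CC$, then use the double-centralizer and dimension formulas for premodular categories to match $\dim(\AA \btsvec \B)$ with $\dim(\CC)$, and finally invoke the universal property of the $\svec_\e$-tensor product to obtain a braided tensor equivalence $\AA \btsvec \B \xrightarrow{\simeq} \CC$.

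First I would verify that $\svec_\e = \CC'$ is contained in $\AA$, so that the transparent fermion $f$ of $\CC$ agrees with that of $\AA$; this is built into the notion of ``super-modular subcategory over $\svec_\e$''. Since $f$ is transparent in $\CC$, it also lies in $\B = C_\CC(\AA)$, so $\svec_\e \subseteq \B$. The double-centralizer theorem for premodular categories (see e.g.~\cite{MugerSF2, DrGNO}), applied using $\CC' \subseteq \AA$, gives $C_\CC(\B) = C_\CC(C_\CC(\AA)) = \AA$. Hence
\[
\B' = C_\B(\B) = C_\CC(\B) \cap \B = \AA \cap C_\CC(\AA) = \AA' = \svec_\e,
\]
so $\B$ is super-modular over $\svec_\e$ with the same transparent fermion as $\CC$.

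Next I would apply the M\"uger dimension formula
\[
\dim(\AA) \cdot \dim(C_\CC(\AA)) = \dim(\CC) \cdot \dim(\AA \cap \CC')
\]
with $\AA \cap \CC' = \svec_\e$ of dimension $2$, yielding $\dim(\AA)\dim(\B) = 2\dim(\CC)$. Combined with the description of simple objects in Lemma~\ref{l:adm}, this gives
\[
\dim(\AA \btsvec \B) = \tfrac{1}{2}\dim(\AA)\dim(\B) = \dim(\CC).
\]

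For the equivalence, the inclusions $\AA \hookrightarrow \CC$ and $\B \hookrightarrow \CC$ are braided tensor functors over $\svec_\e$ whose images centralize each other. The universal property of $\btsvec$ as a tensor product of braided fusion categories over $\svec_\e$ (cf.~\cite{DNO}) then produces a braided tensor functor $H: \AA \btsvec \B \to \CC$ over $\svec_\e$. To see $H$ is an equivalence, I would show its image tensor-generates $\CC$: by the M\"uger centralizer relations,
\[
C_\CC(\AA \vee \B) = C_\CC(\AA) \cap C_\CC(\B) = \B \cap \AA = \svec_\e = \CC',
\]
so the double centralizer theorem forces $\AA \vee \B = C_\CC(\CC') = \CC$. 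A surjective tensor functor between fusion categories of equal Frobenius-Perron dimension is an equivalence, so $H$ is the desired premodular equivalence over $\svec_\e$. The main obstacle will be verifying in detail the existence of the universal functor $H$ via the condensation/local-module description $\AA \btsvec \B = (\AA \boxtimes \B)^0_A$, which amounts to checking that the composition $\otimes_\CC: \AA \boxtimes \B \to \CC$ sends the \'etale algebra $A = \1 \bot \1 \oplus f \bot f$ to an algebra isomorphic to a direct sum of copies of $\1_\CC$, compatibly with the $\svec_\e$-action.
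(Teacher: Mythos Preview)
Your overall strategy is sound and, for the braided part, essentially reproves by hand what the paper simply quotes as \cite[Prop.~4.3]{DNO}. Two points need attention.

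First, a minor slip: you compute the \emph{categorical} global dimension $\dim(\AA\btsvec\B)=\dim(\CC)$ via M\"uger's centralizer formula, but then invoke the criterion that a surjective tensor functor between fusion categories of equal \emph{Frobenius--Perron} dimension is an equivalence. For super-modular categories these two dimensions need not agree. The repair is immediate: the centralizer dimension formula also holds verbatim for $\FPdim$ (see \cite{DrGNO}), so run that paragraph with $\FPdim$ throughout.

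Second, and this is the real gap: you stop once $H$ is a braided tensor equivalence and simply declare it ``the desired premodular equivalence.'' But the statement asks for an equivalence of \emph{premodular} categories, i.e., one compatible with the spherical (equivalently ribbon) structures. The spherical structure on $\AA\btsvec\B$ is the one induced from $\AA\boxtimes\B$ via \cite{KO02}, and nothing in your argument forces $H$ to intertwine it with the spherical structure on $\CC$. This is exactly the extra step the paper carries out: it identifies $H$ concretely with the functor $\ol{\otimes}$ induced by the tensor product $\AA\boxtimes\B\to\CC$, $X\boxtimes Y\mapsto X\otimes Y$, and then checks on simples that
\[
\dim_A\bigl(F(X\boxtimes Y)\bigr)=d_Xd_Y=d_{X\otimes Y}=d_{\,\ol{\otimes}(F(X\boxtimes Y))},
\]
so $\ol{\otimes}$ preserves quantum dimensions and hence the spherical structure (cf.~\cite{NSind}). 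You should add this verification. Incidentally, the ``main obstacle'' you flag --- producing $H$ from the local-module description --- is actually the routine part (it is precisely \cite[Prop.~4.3]{DNO}); the spherical compatibility is where the content lies.
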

\begin{proof} It is clear that $\AA$ is a super-modular over $\svec_\e$. Note that $\CC'$ is a premodular subcategory of $\B$, which is a nondegenerate braided fusion category over $\svec$ by \cite[Prop.~4.3]{DNO}. Therefore, by Lemma \ref{l:adm}, $\B$ and $\AA \btsvec \B$ are super-modular categories over $\svec_\e$. 

By \cite[Prop.~4.3]{DNO}, there exists a braided tensor equivalence $\AA \btsvec \B \simeq \CC$ over $\svec$. In fact, the tensor product functor $\o : \AA \boxtimes\B \to \CC$, $X \bot Y \mapsto X \o Y$, for any $X \in \AA$ and $Y \in \B$, defines an essentially surjective braided tensor functor. This braided tensor functor descends to a braided tensor equivalence $\ol{\otimes}: \AA\btsvec\B \xrightarrow{\sim} \CC$ over $\svec$, which satisfies the commutative diagram
\begin{equation}
\begin{tikzcd}
\AA\boxtimes \B \ar[rr, "\otimes"]\ar[dr, "F"'] && \CC\\
& \AA\btsvec\B \ar[ur, "\ol{\otimes}"']
\end{tikzcd}\,.
\end{equation}
By Lemma \ref{l:adm}, any simple object in $\AA\btsvec\B$ is isomorphic to $F(X\boxtimes Y)$ for some $(X, Y) \in \irr(\AA)\times \irr(\B)$ and 
$$
\dim_A(F(X\boxtimes Y)) = d_Xd_Y = d_{X\otimes Y} = d_{\ol{\otimes}(F(X\boxtimes Y))}\,.
$$
Therefore, $\ol{\otimes}$ preserves spherical structures, and hence is an equivalence of premodular categories.
\end{proof}

\begin{cor}\label{cor:sup-trans-sub}
Let $\CC$ be a transitive super-modular category. Then any fusion subcategory of $\CC$ is  transitive modular or super-modular.
\end{cor}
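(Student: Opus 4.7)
The plan is to reduce to two cases according to whether the transparent fermion $f\in\CC'$ is contained in $\DD$. By Theorem~\ref{thm:super}(iii) the M\"uger center of $\DD$ is contained in $\svec_\e$, so $\DD$ is modular when $f\notin\DD$ and super-modular when $f\in\DD$; the structural dichotomy is therefore immediate, and only the transitivity of $\DD$ remains to be verified in each case. In both situations the strategy is the same: use a centralizer theorem to factor $\CC$ with $\DD$ as one of the tensor factors, and then read off transitivity of $\DD$ by projecting the transitive Galois action on $\CC$ onto the $\DD$-component.

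If $f\in\DD$, Proposition~\ref{p:sph} produces a premodular equivalence $\CC\simeq\DD\btsvec\B$ over $\svec$ with $\B=C_\CC(\DD)$ super-modular. Choosing basic subsets $\Pi_\DD$ and $\Pi_\B$ and the associated basic subset $\Pi_\CC$ of Lemma~\ref{l:adm}(ii), the factorization of the Galois action in Corollary~\ref{cor:sup-orb}(ii) transfers transitivity from $\CC$ to $\DD$ by specializing $Y=\1$: given $X\in\Pi_\DD$, pick $\s\in\GQ$ with $\hs_\CC(\ol{F(\1\bot\1)})=\ol{F(X\bot\1)}$ and project onto $\Pi_\DD$. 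If instead $f\notin\DD$, then $\DD$ is modular, and M\"uger's double centralizer theorem applied to the premodular $\CC$ with its modular subcategory $\DD$ gives $\CC\simeq\DD\boxtimes\B$ as premodular categories, where $\B=C_\CC(\DD)$ contains $f$ and hence is super-modular. A basic subset of $\CC$ may be taken as $\irr(\DD)\times\Pi_\B$, and the block form of $S_\B$ identifies the reduced S-matrix of $\CC$ with the Kronecker product $\hat S_\CC=S_\DD\otimes\hat S_\B$. A short computation from~\eqref{eq:shat-action} then yields
$$
\hs_\CC(X\bot\ol Y)=\hs_\DD(X)\bot \hs_\B(\ol Y)
$$
for all $\s\in\GQ$, $X\in\irr(\DD)$, $\ol Y\in\ol{\irr(\B)}$, after which transitivity of $\CC$ descends to transitivity of $\DD$ exactly as in the first case.

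The only point that needs care is this last factorization of the Galois action, since Corollary~\ref{cor:sup-orb} as stated concerns only $\btsvec$-products of two super-modular categories rather than the Deligne product of a modular with a super-modular one. This compatibility is, however, a routine consequence of the identity $\hat S_\CC=S_\DD\otimes\hat S_\B$ and the same character-theoretic manipulation used to prove Lemma~\ref{lem:gal-on-bot}(ii); so beyond the input of Theorem~\ref{thm:super} and Proposition~\ref{p:sph}, I anticipate no essential obstacle.
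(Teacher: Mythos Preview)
Your proof is correct, and the super-modular case ($f\in\DD$) matches the paper's argument exactly. For the modular case ($f\notin\DD$), however, the paper takes a different and somewhat slicker route: rather than applying M\"uger's splitting theorem directly to the modular $\DD$ inside the merely premodular $\CC$ and then re-establishing the Galois factorization in the mixed modular/super-modular setting (which, as you note, is not literally covered by Corollary~\ref{cor:sup-orb}), the paper enlarges $\DD$ to the super-modular subcategory $\DD\vee\CC'\simeq\DD\boxtimes\svec_\e$, applies the already-proved super-modular case to conclude that $\DD\vee\CC'$ is transitive, and then invokes Lemma~\ref{l:split_super} to deduce that $\DD$ itself is transitive.

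Your approach is more self-contained in that it treats both cases uniformly via a centralizer factorization, at the cost of redoing the Kronecker-product computation in a new context; the paper's reduction buys brevity by recycling Lemma~\ref{l:split_super} and avoiding any new computation. One small caveat on your side: M\"uger's theorem as cited in the paper (\cite[Thm.~4.2]{Muger-Structure}) is stated for modular ambient $\CC$, so to split off a modular $\DD$ from a premodular $\CC$ you would want to cite the more general version (e.g., in \cite{DrGNO}) where only the subcategory is required to be nondegenerate.
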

\begin{proof}
By Theorem \ref{thm:super} (iii), any fusion subcategory $\AA\subset \CC$ is either modular or super-modular. Assume first that $\AA$ is super-modular. In view of Proposition \ref{p:sph} and Corollary \ref{cor:sup-orb}, the proof of transitivity of $\AA$ is the same as that of Theorem \ref{thm:prime-decomp} with the sets $\irr(\AA), \irr(\B)$ and $\irr(\CC)$ of irreducible objects replaced by basic sets of simple objects $\Pi_\AA$, $\Pi_\B$ and the corresponding $\Pi_\CC$. Now, we assume $\AA$ is modular. Then $\DD := \AA\vee\CC'$, the fusion subcategory of $\CC$ generated by $\AA$ and $\CC'$, is a super-modular subcategory of $\CC$. By the above discussions, $\DD$ is transitive. Therefore, by Lemma \ref{l:split_super}, $\AA$ is transitive. 
\end{proof}
\begin{cor}\label{cor:sup-trans2}
Let $\AA$, $\B$ be super-modular over $\svec_{\e}$ for some $\e = \pm 1$. Then $\AA\btsvec\B$ is transitive if and only if the following two conditions hold: both $\AA$, $\B$ are transitive, and $\BQ(\dim(\AA))\cap \BQ(\dim(\B)) = \BQ$. 
\end{cor}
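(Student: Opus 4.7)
The plan is to treat this as the super-modular analog of Corollary \ref{cor:coprime}, reducing both directions to the orbit-counting formula
$$
|\Orb(\AA\btsvec\B)| = [\BQ(\dim(\AA))\cap\BQ(\dim(\B)):\BQ]
$$
provided by Corollary \ref{cor:sup-orb}(iv). Since that formula already presupposes that $\AA$ and $\B$ are transitive, the only nontrivial piece of work is to deduce transitivity of each factor from transitivity of $\CC = \AA\btsvec\B$.

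For the easy (``$\Leftarrow$'') direction, I would simply apply Corollary \ref{cor:sup-orb}(iv): under the hypotheses, the right-hand side equals $[\BQ:\BQ] = 1$, so $|\Orb(\CC)| = 1$ and $\CC$ is transitive.

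For the (``$\Rightarrow$'') direction, I first need to realize $\AA$ and $\B$ as super-modular subcategories of $\CC$. The natural candidate is the functor $\AA \to \CC$, $X\mapsto F(X\boxtimes \1_\B)$, with $F:\AA\boxtimes\B\to\AA\btsvec\B$ the free-module functor of Lemma \ref{l:adm}. Using the adjunction $F\dashv G$ and $G(F(X'\boxtimes \1_\B))\cong X'\boxtimes \1_\B\oplus (X'\o f_\AA)\boxtimes f_\B$, one checks that
$$
\Hom_{\CC}(F(X\boxtimes \1_\B), F(X'\boxtimes \1_\B)) \cong \Hom_\AA(X,X')
$$
because $\Hom_\B(\1_\B, f_\B)=0$. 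So the functor is fully faithful; it is a braided tensor functor since $F$ is, and by \eqref{eq:F_fermion} it sends $f_\AA$ to the fermion $f_\CC$, hence its essential image is a super-modular subcategory of $\CC$. The same argument, with the roles of $\AA$ and $\B$ swapped, embeds $\B$ as a super-modular subcategory of $\CC$. Now Corollary \ref{cor:sup-trans-sub} applied to $\CC$ forces both $\AA$ and $\B$ to be transitive, and then Corollary \ref{cor:sup-orb}(iv) gives $1 = |\Orb(\CC)| = [\BQ(\dim(\AA))\cap\BQ(\dim(\B)):\BQ]$, i.e.\ the intersection is $\BQ$.

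The only delicate step is the embedding argument: one must verify that $X\mapsto F(X\boxtimes\1_\B)$ actually lands in simple objects (this is immediate from Lemma \ref{l:adm}(i)), that it is fully faithful (adjunction plus $\Hom_\B(\1,f)=0$), and that its image is closed under tensor product and contains the transparent fermion of $\CC$. All other ingredients are direct invocations of results proved earlier in the section.
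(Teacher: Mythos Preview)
Your proof is correct and matches the paper's approach: both directions reduce to Corollary \ref{cor:sup-orb}(iv), with the forward direction first invoking Corollary \ref{cor:sup-trans-sub} to obtain transitivity of the factors. The paper simply takes for granted that $\AA$ and $\B$ sit inside $\AA\btsvec\B$ as super-modular subcategories; your explicit embedding argument via $X\mapsto F(X\boxtimes\1_\B)$ fills that gap, though you could also have bypassed it entirely by noting from Corollary \ref{cor:sup-orb}(iii) that $|\Orb(\AA)|\cdot|\Orb(\B)|\le|\Orb(\CC)|=1$.
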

\begin{proof}
 Let $\CC = \AA\btsvec \B$ be transitive. Then both $\AA$ and $\B$ are transitive by Corollary \ref{cor:sup-trans-sub}. Therefore, by Corollary \ref{cor:sup-orb} (iv), we have 
 $$
 |\Orb(\CC)| = [\BQ(\dim(\AA)) \cap \BQ(\dim(\B)): \BQ] = 1,
 $$
 and so $\BQ(\dim(\AA)) \cap \BQ(\dim(\B)) = \BQ$.

Conversely, it follows immediately from Corollary \ref{cor:sup-orb} (iv) that if $\AA$ and $\B$ are transitive, and $\BQ(\dim(\AA)) \cap \BQ(\dim(\B)) = \BQ$, then $\CC$ is transitive.
\end{proof}

The following definition generalizes the primality of modular categories. 

\begin{defn}
Let $\EE$ be a symmetric fusion category, and $\CC$ a nondegenerate braided fusion category $\CC$  over $\EE$. We say that $\CC$ is \emph{$\EE$-prime} if it has no nondegenerate braided fusion subcategory over $\EE$ except $\EE$ and $\CC$. An $\EE$-prime braided fusion category is called \emph{$\EE$-simple} if it is not pointed. For $\EE=\svec$, we simply use the terms s-prime and s-simple instead of $\svec$-prime and $\svec$-simple.
\end{defn}

Note the definition of $\EE$-simple categories is consistent with the definition  of s-simple categories introduced in \cite{DNO}. We will call a super-modular category \emph{trivial} if it is braided equivalent to $\svec$. In particular, $\svec_{\pm 1}$ are trivial. In view of Theorem \ref{thm:super} (iii), nontrivial s-prime transitive super-modular categories are s-simple. Now we can state and prove the prime decomposition theorem for transitive super-modular categories (cf.~Theorem \ref{thm:prime-decomp}).

\begin{thm}\label{thm:sup-decomp}
Let $\CC$ be a nontrivial transitive super-modular category over $\svec_\e$ for some $\e=\pm 1$. 
Then 
\begin{equation}\label{eq:s-simple}
\CC \simeq \CC_1 \btsvec \cdots \btsvec \CC_m\,,
\end{equation}
as premodular categories, where $\CC_1, \dots, \CC_m$ form the complete list of inequivalent $s$-simple subcategories of $\CC$.
Moreover, such factorization into  s-simple super-modular categories over $\svec_\e$ of $\CC$ is unique up to permutation of factors.
\end{thm}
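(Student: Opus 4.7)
I induct on $\dim(\CC)$. If $\CC$ is s-simple we set $m=1$ and stop. Otherwise, Theorem \ref{thm:super}(iii) gives $\CC_{\pt} = \svec$, so $\CC$ is non-pointed and hence not s-prime; pick a super-modular subcategory $\CC_1 \subsetneq \CC$ with $\CC_1 \ne \svec$ of minimal dimension. Minimality forces $\CC_1$ to be s-prime, and since $\CC_{\pt} = \svec$ the subcategory $\CC_1 \supsetneq \svec$ must contain a non-invertible simple object, so it is non-pointed; thus $\CC_1$ is s-simple. Proposition \ref{p:sph} then yields $\CC \simeq \CC_1 \btsvec C_{\CC}(\CC_1)$ as premodular categories over $\svec$, and the centralizer is transitive super-modular by Corollary \ref{cor:sup-trans-sub} and Proposition \ref{p:sph}. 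Since $\dim(\CC_1) > 2$, the centralizer has strictly smaller dimension than $\CC$, so the inductive hypothesis factors it into s-simples, producing \eqref{eq:s-simple}.

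\textbf{Uniqueness and the complete list.} Given any s-simple factorization $\CC \simeq \CC_1 \btsvec \cdots \btsvec \CC_m$, I claim the $\CC_i$ are pairwise inequivalent and exhaust all s-simple subcategories of $\CC$ up to equivalence; uniqueness up to permutation is immediate from this. For pairwise inequivalence, suppose $\CC_i \simeq \CC_j$ for some $i \ne j$. The subcategory $\CC_i \btsvec \CC_j \subseteq \CC$ is transitive by Corollary \ref{cor:sup-trans-sub}, so Corollary \ref{cor:sup-trans2} gives $\BQ(\dim(\CC_i)) \cap \BQ(\dim(\CC_j)) = \BQ$; the assumed equivalence makes this intersection equal $\BQ(\dim(\CC_i))$, forcing $\dim(\CC_i) \in \BQ$ and hence $\dim(\CC_i) \in \BZ$ (being an algebraic integer). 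But this contradicts Theorem \ref{thm:super}(ii) applied to the nontrivial transitive $\CC_i$. For the completeness statement, let $\DD \subseteq \CC$ be any s-simple subcategory. The fermion $f$ lies in both $\DD$ and in each $\CC_i$, so $\DD \cap \CC_i$ is a super-modular subcategory of the s-simple $\CC_i$ and equals $\svec$ or $\CC_i$; the latter case, together with s-simplicity of $\DD$, yields $\DD = \CC_i$.

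\textbf{Main obstacle.} The crux is ruling out the ``no-alignment'' case where $\DD \cap \CC_i = \svec$ for every $i$. I plan to establish the following super-modular analog of \cite[Prop.~2.2]{DMNO}: \emph{if $\AA, \B$ are super-modular over $\svec_\e$ and $\EE \subseteq \AA \btsvec \B$ is a super-modular fusion subcategory with $\EE_{\pt} = \svec$, then $\EE \simeq (\EE \cap \AA) \btsvec (\EE \cap \B)$.} The hypothesis $\EE_{\pt} = \svec$ holds automatically for super-modular subcategories of our transitive $\CC$ by Theorem \ref{thm:super}(iii). My strategy is to translate DMNO's argument into the free-module description of Lemma \ref{l:adm}: simples of $\AA \btsvec \B$ have the form $F(X \boxtimes Y)$, and the vanishing of non-fermionic invertibles excludes the ``twisted'' identifications that would otherwise let a fusion subcategory fail to split along the tensor decomposition. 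Iterating this lemma across $\CC \simeq \CC_1 \btsvec \cdots \btsvec \CC_m$ expresses every super-modular $\DD \subseteq \CC$ as $\btsvec_i (\DD \cap \CC_i)$; s-simplicity of $\DD$ forces exactly one factor to be nontrivial, giving $\DD = \CC_i$ for a unique $i$ and completing the proof. Adapting DMNO's simple-object and matrix-unit machinery to the setting of modules over the fermionic algebra $A$ used in $\btsvec$ is where I expect the main technical work to lie.
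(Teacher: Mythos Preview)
Your overall strategy is sound and the argument for pairwise inequivalence of the factors matches the paper's exactly.  The main difference is that the ``main obstacle'' you identify---the super-modular analog of \cite[Prop.~2.2]{DMNO}---is already in the literature: it is \cite[Thm.~4.13]{DNO}.  That theorem, applied to nondegenerate braided fusion categories over $\svec$ with no nontrivial Tannakian subcategory and $\CC_\pt=\svec$ (both guaranteed here by Theorem~\ref{thm:super}(iii)), gives in one stroke both the existence of the s-simple factorization \emph{and} the statement that the factors constitute the complete list of s-simple subcategories.  The paper simply invokes this result, then uses Corollary~\ref{cor:sup-trans2} exactly as you do to deduce that the factors have pairwise distinct global dimensions, hence are inequivalent; uniqueness then follows.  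So your route is correct but reproves part of \cite{DNO} from scratch, while the paper's is a two-line citation.

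Two minor technical remarks on your write-up.  First, inducting on $\dim(\CC)$ is awkward since these are algebraic (not rational) integers; induct on the rank $|\irr(\CC)|$ or on $\FPdim(\CC)$ instead, or simply use the finiteness of the lattice of fusion subcategories.  Second, in your iteration step you should note that $\DD\cap\B$ again satisfies $(\DD\cap\B)_\pt=\svec$ (it does, since $(\DD\cap\B)_\pt\subseteq\CC_\pt=\svec$ and $\DD\cap\B$ is super-modular), so the two-factor lemma can indeed be applied recursively.
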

\begin{proof} By Theorem \ref{thm:super} (iii), $\CC$ has no Tannakian subcategory other than $\Vec$ and $\CC_\pt = \CC' \simeq \svec_\e$. According to  \cite[Thm. 4.13]{DNO} (i) and Proposition \ref{p:sph}
$$
\CC \simeq \CC_1 \btsvec \cdots \btsvec \CC_m
$$
as premodular categories for some s-simple subcategories $\CC_1, \dots, \CC_m$ of $\CC$. It follows from   Corollary \ref{cor:sup-trans2} that $\CC_1, ..., \CC_m$  are  transitive and 
$$
\BQ(\dim(\CC_i)) \cap  \BQ(\dim(\CC)/\dim(\CC_i)) = \BQ
$$
for any $i = 1, \dots, m$. In particular, these s-simple super-modular subcategories of $\CC$ have distinct global dimensions. According to \cite[Thm.~4.13]{DNO} (ii), $\CC_1, \cdots, \CC_m$ are all the s-simple super-modular subcategories of $\CC$. Thus, if 
$$
\CC \simeq \DD_1 \btsvec \cdots \btsvec \DD_n
$$
as premodular categories for some s-simple super-modular categories  $\DD_1, \dots, \DD_n$ over $\svec_\e$, then they are equivalent to a complete list of inequivalent s-simple  super-modular subcategories of $\CC$. Therefore, $m=n$ and the statement follows.
\end{proof}

Now, we demonstrate a family of  transitive  non-split super-modular categories derived from quantum group modular categories. 

According to \cite{Fermion17}, for any $k \ge 0$ and $l \in (\Zn{8(k+1)})^\times$, the category $\CC = \BB_{4k+2, l}$ (see Section \ref{sec:example}) is  super-modular 
with $\irr(\CC) = \{V_{2j}\mid 0 \le j \le 2k+1\}$. The fermion of $\CC$ is $V_{4k+2}$. By the fusion rules \eqref{eq:su2}, we have $V_{2j} \otimes V_{4k+2} = V_{4k+2-2j}$. In the following discussions, we choose
$$\Pi_0 = \{V_{2j} \mid 0 \le j \le k\}\,.$$
When $k = 0$, $\CC$ is braided equivalent to $\svec$, and when $k \ge 1$, $\CC$ is non-split.

\begin{prop}\label{p:prime_super}
For any $k \ge 1$, the super-modular category $\BB_{4k+2,l}$ is s-simple. 
\end{prop}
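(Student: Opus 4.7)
The plan is to verify the two ingredients of s-simplicity separately: that $\CC := \BB_{4k+2,l}$ is not pointed, and that every super-modular subcategory of $\CC$ equals $\svec$ or $\CC$. Non-pointedness is immediate from the fusion rules \eqref{eq:su2}: since $k \ge 1$, one has $V_2 \otimes V_2 = V_0 \oplus V_2 \oplus V_4$, so $V_2$ is a non-invertible simple object and $\CC_\pt \ne \CC$.

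For s-primality, let $\AA$ be a super-modular subcategory of $\CC$. Then $\AA$ contains the M\"uger center $\CC' = \svec = \langle V_0, V_{4k+2} \rangle$. Assume $\AA \ne \svec$ and pick a simple $V_{2j} \in \AA$ with $j \notin \{0, 2k+1\}$. Because the fermion $f = V_{4k+2}$ acts on $\irr(\CC)$ by $V_{2j} \mapsto V_{4k+2 - 2j}$ and $\svec \subset \AA$, I may replace $V_{2j}$ by $f \otimes V_{2j}$ if necessary so as to assume $1 \le j \le k$. For such $j$ the inequality $4j \le 8k+4-4j$ makes the truncation in \eqref{eq:su2} trivial, so
\[
V_{2j} \otimes V_{2j} \;=\; V_0 \oplus V_2 \oplus V_4 \oplus \cdots \oplus V_{4j},
\]
and every summand lies in $\AA$; in particular $V_2 \in \AA$.

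Finally, I will show $V_2$ tensor-generates $\CC$ inside $\AA$. For $1 \le m \le 2k$ the fusion rule reads $V_2 \otimes V_{2m} = V_{2m-2} \oplus V_{2m} \oplus V_{2m+2}$ (again the inequality $2m+2 \le 8k+2-2m$ makes the truncation inactive), so a direct induction on $m$ starting from $V_0, V_2 \in \AA$ places each $V_{2m}$ with $0 \le m \le 2k+1$ in $\AA$. Therefore $\AA = \CC$, which completes the proof that $\CC$ is s-prime and hence s-simple. The argument is essentially the super-modular analogue of Lemma \ref{lem:gen}; the only delicate point is the reduction to $1 \le j \le k$ via the fermion action, which guarantees that the $\sl_2$-reflection in the truncation of the fusion rules is not triggered.
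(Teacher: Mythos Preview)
Your proof is correct and follows essentially the same strategy as the paper: pick a non-invertible simple object $V_{2j}$ in the subcategory, show $V_2$ occurs in $V_{2j}\otimes V_{2j}$, and conclude by tensor generation. The paper in fact proves a slightly stronger statement (every fusion subcategory is $\Vec$, $\CC'$, or $\CC$) and does so without the fermion reduction: for any $1\le j\le 2k$ one has $2\le\min(4j,\,8k+4-4j)$, so $N_{2j,2j}^{2}=1$ directly, making the passage to $1\le j\le k$ unnecessary.
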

\begin{proof}
First, we show that any nontrivial fusion subcategory of $\CC$ is either $\CC$ or $\CC'$. 

Recall that $\CC_\pt=\CC'$  and  $\irr(\CC') = \{\1, V_{4k+2}\}$. Assume that $\DD$ is a   nontrivial fusion subcategory of $\CC$ and $\DD$ is not pointed. Then $\DD$ has a simple object $X$ which is not invertible, and so $X \cong $ $V_{2j}$ for some $1 \le j \le 2k$. In particular, we have $4j \ge 4$, and $2(4k+2)-4j  \ge 4$. So by the fusion rules, $N_{2j,2j}^{2} = 1$, which means $\DD$ contains $V_2$. Since $V_2$ tensor generates $\CC$, we have $\DD = \CC$. Therefore, $\CC$ is s-prime. Since $k \ge 1$, $\CC \ne \svec$, so it is s-simple.
\end{proof}

\begin{prop}\label{p:transitive_super}
Let $\CC=\BB_{4k+2,l}$ for some integer $k \ge 1$ and $l \in \UZ{8(k+1)}$. Then $\CC$ is transitive if and only if $k=2^x-1$ for $x \ge 1$. 
\end{prop}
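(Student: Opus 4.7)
The plan is to compute the Galois action on $\Pi_0 = \{V_{2j} : 0 \le j \le k\}$ directly from the reduced S-matrix $\hat{S}_{V_{2j}, V_{2m}} = [(2j+1)(2m+1)]_\zeta$, where $\zeta = q^l$ has order $n = 8(k+1)$ (since $\gcd(l, 8(k+1)) = 1$ forces $l$ to be odd). Every element of $G_\CC$ comes from a Galois automorphism $\s_c : \zeta \mapsto \zeta^c$ with $c \in \UZ{n}$. Substituting $Y = \1$ into \eqref{eq:shat-action} and using the identity $\s_c([m]_\zeta) = [mc]_\zeta/[c]_\zeta$, the defining relation $\hs_c(\ol{V_0}) = \ol{V_{2m'}}$ translates to
\[
\frac{[(2j+1)c]_\zeta}{[c]_\zeta} = \frac{[(2j+1)(2m'+1)]_\zeta}{[2m'+1]_\zeta} \quad \text{for all } j \in [0, k].
\]
Evaluating at $j = 1$ (using $k \ge 1$) and applying the elementary fact $[a]_\zeta = [b]_\zeta \iff a \equiv \pm b \pmod n$ reduces this to $c \equiv \pm(2m'+1) \pmod{4(k+1)}$, and a direct check using $\zeta^{4(k+1)} = -1$ shows this congruence conversely implies the identity for every $j$.

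As $c$ ranges over $\UZ{n}$, the orbit of $\ol{V_0}$ under $G_\CC$ in $\Pi_0$ (before reduction) is therefore $\{\ol{V_{2m'}} : 0 \le m' \le 2k+1,\ \gcd(2m'+1, k+1) = 1\}$, where I use that any odd integer is automatically coprime to the factor $8$ in $n = 8(k+1)$. Under the fermion identification $V_{2m'} \sim V_{4k+2-2m'}$, which preserves the coprimality condition since $4(k+1) - (2m'+1) = 4k+3-2m'$, this orbit becomes $\{\ol{V_{2j}} : 0 \le j \le k,\ \gcd(2j+1, k+1) = 1\}$ inside $\Pi_0$.

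Transitivity of $\CC$ is thus equivalent to the arithmetic condition that every odd integer in $[1, 2k+1]$ is coprime to $k+1$. If $k+1$ has an odd prime divisor $p$, then $p \le k+1 \le 2k+1$ supplies a counterexample; conversely, if $k+1$ is a power of $2$, every odd integer is coprime to $k+1$. Combined with the hypothesis $k \ge 1$, this forces $k+1 = 2^x$ for some $x \ge 1$, i.e., $k = 2^x - 1$. The main technical step is the propagation argument in the first paragraph: showing that the character identity obtained by evaluating at $j = 1$ (which a priori only gives a congruence modulo $4(k+1)$) actually implies the identity for all $j \in [0, k]$, which rests crucially on $\zeta^{4(k+1)} = -1$ together with the parity of the indices $2j+1$.
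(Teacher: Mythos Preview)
Your proof is correct and proceeds by a somewhat different route from the paper's. The paper establishes the forward direction (transitive $\Rightarrow$ $k = 2^x - 1$) by a pure counting argument: since $\BQ(S)$ lies in the real subfield of $\BQ_{8(k+1)}$, one has $|G_\CC| \mid \varphi(8(k+1))/2$, and transitivity forces $|\Pi_0| = k+1$ to divide this; elementary divisibility then pins down $k+1$ as a power of $2$. The converse in the paper is exactly the Galois computation you carry out. By contrast, you compute the entire $G_\CC$-orbit of $\ol{\1}$ explicitly as $\{\ol{V_{2m'}} : 0 \le m' \le k,\ \gcd(2m'+1, k+1) = 1\}$ and read off both implications at once from the condition that every odd integer in $[1,2k+1]$ be prime to $k+1$. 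Your approach yields more information (the full orbit structure for arbitrary $k$) at the price of the propagation step; the paper's divisibility argument is quicker for the forward direction but less explicit.

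One small inaccuracy: the stated fact ``$[a]_\zeta = [b]_\zeta \iff a \equiv \pm b \pmod n$'' is false when $\ord(\zeta)=n$ is even (for instance $[1]_\zeta = [3]_\zeta$ at $n = 8$, yet $1 \not\equiv \pm 3 \pmod 8$); the correct equivalence is $a \equiv b$ or $a + b \equiv n/2 \pmod n$. This does not affect your argument, because what the $j=1$ evaluation actually produces is $[3c]_\zeta/[c]_\zeta = \zeta^{2c} + 1 + \zeta^{-2c}$, and the cosine equality $\zeta^{2c} + \zeta^{-2c} = \zeta^{2(2m'+1)} + \zeta^{-2(2m'+1)}$ gives $2c \equiv \pm 2(2m'+1) \pmod n$, hence precisely the congruence $c \equiv \pm(2m'+1) \pmod{4(k+1)}$ you claim. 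In any case only the converse (propagation) implication is needed to identify the orbit, and that part is argued correctly via $\zeta^{4(k+1)} = -1$ and the parity of $2j+1$.
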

\begin{proof}

Recall that the quantum parameter of $\CC$ is $q^l = \exp(\frac{l\pi i}{4(k+1)})$, and $\BQ(S)$ is a real subfield of $\BQ_{8(k+1)}$, so $|G_{\CC}|$ divides $\varphi(8(k+1))/2$, where $\varphi$ is the Euler phi function. Assume $\CC$ is transitive.  Then $|\Pi_0| = k+1$ must divide $\varphi(8(k+1))/2$. 

We first observe that $k$ must be odd. Suppose $k$ is even. Then  $k+1 \ge 3$ is an odd integer, and  so $\varphi(8(k+1))/2 = 2\varphi(k+1)$. Therefore,   $k+1 \mid \varphi(8(k+1))/2$ implies $k+1 \mid \varphi(k+1)$. This divisibility does not hold for any $k >0$. Therefore, $k$ must be odd. 

Let $k+1 = 2^xw$, where $x \ge 1$ and $w$ is odd. Then $\varphi(8(k+1))/2 = 2^{x+1}\varphi(w)$. Since $k+1$ divides $\varphi(8(k+1))/2$, we have $w \mid 2\varphi(w)$ and hence $w \mid \varphi(w)$. This can only happen when $w = 1$, or equivalently, $k=2^x-1$.

Conversely, assume $k = 2^x-1$ for $x \ge 1$ and $\CC = \BB_{4k+2,l}$. With respect to our choice of $\Pi_0$, for any $0 \le a, b \le k$, we have
$$
\hat{S}_{2a,2b} = [(2a+1)(2b+1)]_{q^{l}}\,.
$$
Following the same argument as in the proof of Proposition \ref{p:B-trans-prime},  one can show that $\CC$ is transitive.  More precisely, since $\BQ(S) \subset \BQ_{8(k+1)} = \BQ_{2^{x+3}}$, for any $0 \le j \le k$, we have $\gcd(2j+1, 2^{x+3}) = 1$. So there exists $\s \in \BQ_{2^{x+3}}$ such that $\s(q) = q^{2j+1}$. Therefore,
$$
\s\left(\frac{\hat{S}_{2i,0}}{\hat{S}_{0,0}}\right)
=
\s([2i+1]_{q^{l}})
=
\frac{[(2i+1)(2j+1)]_{q^{l}}}{[2j+1]_{q^{l}}}
=
\frac{\hat{S}_{2i,2j}}{\hat{S}_{0,2j}}\,.
$$
In other words, $\hs(V_{0}) = V_{2j}$, and hence $\CC$ is transitive.
\end{proof}

In light of Theorem \ref{thm:main}, it is natural to ask whether there are other non-split transitive super-modular categories that are s-simple, and we propose the following question at the end this paper.

\begin{conj}
The quantum group categories in Proposition \ref{p:transitive_super} are all the s-simple non-split transitive super-modular categories up to Galois conjugates and spherical structures.
\end{conj}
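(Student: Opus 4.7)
My plan is to mirror the four-step proof of Theorem \ref{thm:prime-transitive}, which classified prime transitive modular categories as Galois conjugates of $\BB_{p-2,1}$. Let $\CC$ be an s-simple, non-split, transitive super-modular category over $\svec_\e$; the goal is to show that $\CC$ is equivalent, as a premodular category, to a Galois conjugate of $\BB_{2^{x+2}-2,l}$ for some $x \ge 1$ and $l \in \UZ{2^{x+3}}$, up to the ambiguity in the spherical structure that is allowed by the statement.

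First, I would develop a representation-theoretic framework for super-modular categories paralleling Section \ref{subsec:mod-group-rep}. Concretely, I expect a projective representation of a suitable congruence subgroup of $\SLZ$ on the space $V_{\Pi_\CC} = \Span\{e_X \mid X \in \Pi_\CC\}$, built from the reduced S-matrix $\hat S$ and the restriction of $T$ to a basic subset, together with Galois symmetries analogous to \eqref{eq:Gs}--\eqref{eq:conjuate-rep} derived from \eqref{eq:shat-action} and \cite[Lem.~2.2]{NRWZ20a}. With this in place, the characteristic-2-group machinery of Proposition \ref{p:HC-decomp}, and hence the argument of Theorem \ref{thm:tran_irr}, should transfer verbatim to yield that, for transitive $\CC$, the associated representation is minimal and irreducible.

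Second, I would bound the Frobenius--Schur exponent $N$ of $\CC$. Since $\BQ(\hat S)$ lies in the maximal real subfield of $\BQ_N$, transitivity forces $|\Pi_\CC| = |G_\CC|$ to divide $\tfrac12\varphi(N)$; combined with s-simplicity and an analog of Theorem \ref{thm:ordT-prime}, this should narrow $N$ to a $2$-power of the form $2^{x+3}$, matching precisely the exponents in Proposition \ref{p:transitive_super}. Third, as in Lemma \ref{l:realizing_min}, every such minimal irreducible representation should be realized by some $\BB_{2^{x+2}-2,l}$. Finally, an analog of Lemma \ref{lem:FK-so3} would upgrade the resulting fusion-ring isomorphism to a premodular equivalence up to Galois conjugation, with the choice of ribbon spherical structure (and hence the sign $\e = d_{f_\CC}$) accounting for the phrase ``up to spherical structures''.

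The main obstacle is the lack of a known Finkelberg--Kazhdan-style rigidity theorem for the super-modular fusion rings arising from $\BB_{4k+2,l}$ at even levels, whereas the odd-level modular case used in Lemma \ref{lem:FK-so3} is already in the literature; one would likely need to prove such a classification as a self-contained lemma. A second serious difficulty is that the minimal-representation dictionary of Lemma \ref{l:unique_minimal} is cleanest at odd primes $p>3$, where $\eta^p_j$ is uniquely pinned down by the $\GQ$-orbit of a single eigenvalue of $\ft$; at prime power $2^m$ the list of irreducible representations of $\SL_2(\Zn{2^m})$ is richer (cf.~\eqref{eq:phi2}), and the one-dimensional twist characters $\chi_x$ of Example \ref{rmk:chi} interact delicately with the fermionic spherical structure. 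Controlling those twist and sign ambiguities simultaneously with the Galois conjugation, and ruling out ``exotic'' 2-power levels not of the form $2^{x+3}$, is where I expect the bulk of the new work to lie.
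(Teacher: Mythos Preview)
The statement you are attempting to prove is labeled a \emph{Conjecture} in the paper, and the paper offers no proof of it; it is posed as an open problem immediately after Proposition~\ref{p:transitive_super}. So there is no ``paper's own proof'' to compare your proposal against.

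As for the proposal itself: it is a reasonable strategic outline, and you have correctly identified the two principal obstructions, but these are not minor technicalities---they are precisely why the statement remains a conjecture. First, the $\SLZ$-representation machinery of Section~\ref{subsec:mod-group-rep} does not transfer directly to super-modular categories: the reduced data $(\hat S, \hat T)$ does not furnish a linear representation of $\SLZ$ but rather of a congruence subgroup (or a metaplectic-type extension), and the Galois symmetry \eqref{eq:gal_t} and the congruence-kernel results of \cite{DongLinNg} that underpin Theorem~\ref{thm:tran_irr} have no established super-modular analogues in the literature you cite. Second, even granting an irreducibility/minimality statement, the dictionary of Lemma~\ref{l:unique_minimal} breaks down at the prime $2$: the irreducible representations of $\SL_2(\BZ/2^m\BZ)$ of dimension $\varphi_2(2^m)$ are not uniquely determined by $\spec{\rho(\ft)}$ in the way $\eta^p_{\pm 1}$ are for odd $p$, so the analogue of Theorem~\ref{thm:ordT-prime} (forcing the level to be exactly $2^{x+3}$) and of Lemma~\ref{l:realizing_min} would require genuinely new arguments. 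Third, as you note, no analogue of \cite[Cor.~8.2.7]{FK} is available for the $\SO(3)$-at-even-level fusion rules, so the final step from fusion-ring isomorphism to premodular equivalence is unsupported. Your proposal is therefore a plausible roadmap, not a proof; each of the three steps you flag as ``expected'' currently lacks the requisite input.
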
 

\section*{Acknowledgements}
This paper is based upon work supported by the National Science Foundation under the Grant No.~DMS-1440140 while the first and the last authors were in residence at the Mathematical Sciences Research Institute in Berkeley, California, during the Spring 2020 semester. They would also like to thank Eric Rowell for fruitful discussions.

\bibliographystyle{abbrv}
\bibliography{zbib}

\begin{thebibliography}{10}

\bibitem{And92}
H.~H. Andersen.
\newblock Tensor products of quantized tilting modules.
\newblock {\em Comm. Math. Phys.}, 149(1):149--159, 1992.

\bibitem{AP}
H.~H. Andersen and J.~Paradowski.
\newblock Fusion categories arising from semisimple {L}ie algebras.
\newblock {\em Comm. Math. Phys.}, 169(3):563--588, 1995.

\bibitem{BakalovKirillov}
B.~Bakalov and A.~Kirillov, Jr.
\newblock {\em Lectures on tensor categories and modular functors}, volume~21
  of {\em University Lecture Series}.
\newblock American Mathematical Society, Providence, RI, 2001.

\bibitem{BHMV1}
C.~Blanchet, N.~Habegger, G.~Masbaum, and P.~Vogel.
\newblock Three-manifold invariants derived from the {K}auffman bracket.
\newblock {\em Topology}, 31(4):685--699, 1992.

\bibitem{BHMV2}
C.~Blanchet, N.~Habegger, G.~Masbaum, and P.~Vogel.
\newblock Topological quantum field theories derived from the {K}auffman
  bracket.
\newblock {\em Topology}, 34(4):883--927, 1995.

\bibitem{Bruguieres}
A.~Brugui\`eres.
\newblock Cat\'egories pr\'emodulaires, modularisations et invariants des
  vari\'et\'es de dimension 3.
\newblock {\em Math. Ann.}, 316(2):215--236, 2000.

\bibitem{Fermion17}
P.~Bruillard, C.~Galindo, T.~Hagge, S.-H. Ng, J.~Y. Plavnik, E.~C. Rowell, and
  Z.~Wang.
\newblock Fermionic modular categories and the 16-fold way.
\newblock {\em J. Math. Phys.}, 58(4):041704, 31, 2017.

\bibitem{bruillard2014classification}
P.~Bruillard, C.~Galindo, S.-M. Hong, Y.~Kashina, D.~Naidu, S.~Natale, J.~Y.
  Plavnik, and E.~C. Rowell.
\newblock Classification of integral modular categories of frobenius--perron
  dimension $pq^4$ and $p^2 q^2$.
\newblock {\em Canadian Mathematical Bulletin}, 57(4):721--734, 2014.

\bibitem{BGNPRW}
P.~Bruillard, C.~Galindo, S.-H. Ng, J.~Y. Plavnik, E.~C. Rowell, and Z.~Wang.
\newblock On the classification of weakly integral modular categories.
\newblock {\em J. Pure Appl. Algebra}, 220(6):2364--2388, 2016.

\bibitem{BNRWClassificationByRank}
P.~Bruillard, S.-H. Ng, E.~C. Rowell, and Z.~Wang.
\newblock On classification of modular categories by rank.
\newblock {\em Int. Math. Res. Not. IMRN}, 2016(24):7546--7588, 2016.

\bibitem{RankFinite}
P.~Bruillard, S.-H. Ng, E.~C. Rowell, and Z.~Wang.
\newblock Rank-finiteness for modular categories.
\newblock {\em J. Amer. Math. Soc.}, 29(3):857--881, 2016.

\bibitem{BPR19}
P.~Bruillard, J.~Y. Plavnik, and E.~C. Rowell.
\newblock Modular categories of dimension {$p^3m$} with {$m$} square-free.
\newblock {\em Proc. Amer. Math. Soc.}, 147(1):21--34, 2019.

\bibitem{BR12}
P.~Bruillard and E.~C. Rowell.
\newblock Modular categories, integrality and {E}gyptian fractions.
\newblock {\em Proc. Amer. Math. Soc.}, 140(4):1141--1150, 2012.

\bibitem{BPRZ19a}
P.~{Bruillard}, J.~{Yael Plavnik}, E.~C. {Rowell}, and Q.~{Zhang}.
\newblock {Classification of super-modular categories}.
\newblock {\em arXiv e-prints}, arXiv:1909.09843, 2019.

\bibitem{CosteGannon}
A.~Coste and T.~Gannon.
\newblock Remarks on {G}alois symmetry in rational conformal field theories.
\newblock {\em Phys. Lett. B}, 323(3-4):316--321, 1994.

\bibitem{Creamer}
D.~Creamer.
\newblock {\em A computational approach to classifying low rank modular tensor
  categories}.
\newblock PhD thesis, Texas A\&M University, 2018.

\bibitem{DMNO}
A.~Davydov, M.~M\"{u}ger, D.~Nikshych, and V.~Ostrik.
\newblock The {W}itt group of non-degenerate braided fusion categories.
\newblock {\em J. Reine Angew. Math.}, 677:135--177, 2013.

\bibitem{DNO}
A.~Davydov, D.~Nikshych, and V.~Ostrik.
\newblock On the structure of the {W}itt group of braided fusion categories.
\newblock {\em Selecta Math. (N.S.)}, 19(1):237--269, 2013.

\bibitem{dBG}
J.~de~Boer and J.~Goeree.
\newblock Markov traces and {${\rm II}_1$} factors in conformal field theory.
\newblock {\em Comm. Math. Phys.}, 139(2):267--304, 1991.

\bibitem{Del1}
P.~Deligne.
\newblock Cat\'{e}gories tannakiennes.
\newblock In {\em The {G}rothendieck {F}estschrift, {V}ol. {II}}, volume~87 of
  {\em Progr. Math.}, pages 111--195. Birkh\"{a}user Boston, Boston, MA, 1990.

\bibitem{Del2}
P.~Deligne.
\newblock Cat\'{e}gories tensorielles.
\newblock {\em Mosc. Math. J.}, 2(2):227--248, 2002.

\bibitem{DongLinNg}
C.~Dong, X.~Lin, and S.-H. Ng.
\newblock Congruence property in conformal field theory.
\newblock {\em Algebra Number Theory}, 9(9):2121--2166, 2015.

\bibitem{DrGNO}
V.~Drinfeld, S.~Gelaki, D.~Nikshych, and V.~Ostrik.
\newblock On braided fusion categories. {I}.
\newblock {\em Selecta Math. (N.S.)}, 16(1):1--119, 2010.

\bibitem{DumF}
D.~S. Dummit and R.~M. Foote.
\newblock {\em Abstract algebra}.
\newblock John Wiley \& Sons, Inc., Hoboken, NJ, third edition, 2004.

\bibitem{eh93}
W.~Eholzer.
\newblock Fusion algebras induced by representations of the modular group.
\newblock {\em Internat. J. Modern Phys. A}, 8(20):3495--3507, 1993.

\bibitem{Eh}
W.~Eholzer.
\newblock On the classification of modular fusion algebras.
\newblock {\em Comm. Math. Phys.}, 172(3):623--659, 1995.

\bibitem{EGNO}
P.~Etingof, S.~Gelaki, D.~Nikshych, and V.~Ostrik.
\newblock {\em Tensor categories}, volume 205 of {\em Mathematical Surveys and
  Monographs}.
\newblock American Mathematical Society, Providence, RI, 2015.

\bibitem{ENO}
P.~Etingof, D.~Nikshych, and V.~Ostrik.
\newblock On fusion categories.
\newblock {\em Ann. of Math. (2)}, 162(2):581--642, 2005.

\bibitem{FKW}
M.~H. Freedman, K.~Walker, and Z.~Wang.
\newblock Quantum {$\rm SU(2)$} faithfully detects mapping class groups modulo
  center.
\newblock {\em Geom. Topol.}, 6:523--539, 2002.

\bibitem{FK}
J.~Fr\"{o}hlich and T.~Kerler.
\newblock {\em Quantum groups, quantum categories and quantum field theory},
  volume 1542 of {\em Lecture Notes in Mathematics}.
\newblock Springer-Verlag, Berlin, 1993.

\bibitem{GN}
S.~Gelaki and D.~Nikshych.
\newblock Nilpotent fusion categories.
\newblock {\em Adv. Math.}, 217(3):1053--1071, 2008.

\bibitem{Green}
D.~Green.
\newblock Classification of rank 6 modular categories with galois group
  $\langle (012)(345)\rangle$.
\newblock {\em arXiv preprint arXiv:1908.07128}, 2019.

\bibitem{Jones87}
V.~F.~R. Jones.
\newblock Hecke algebra representations of braid groups and link polynomials.
\newblock {\em Ann. of Math. (2)}, 126(2):335--388, 1987.

\bibitem{Kassel}
C.~Kassel.
\newblock {\em Quantum groups}, volume 155 of {\em Graduate Texts in
  Mathematics}.
\newblock Springer-Verlag, New York, 1995.

\bibitem{KO02}
A.~Kirillov, Jr. and V.~Ostrik.
\newblock On a {$q$}-analogue of the {M}c{K}ay correspondence and the {ADE}
  classification of {$\mathfrak{sl}_2$} conformal field theories.
\newblock {\em Adv. Math.}, 171(2):183--227, 2002.

\bibitem{MS}
G.~Moore and N.~Seiberg.
\newblock Lectures on {RCFT}.
\newblock In {\em Physics, geometry, and topology ({B}anff, {AB}, 1989)},
  volume 238 of {\em NATO Adv. Sci. Inst. Ser. B Phys.}, pages 263--361.
  Plenum, New York, 1990.

\bibitem{MuI}
M.~M\"{u}ger.
\newblock From subfactors to categories and topology. {I}. {F}robenius algebras
  in and {M}orita equivalence of tensor categories.
\newblock {\em J. Pure Appl. Algebra}, 180(1-2):81--157, 2003.

\bibitem{MugerSF2}
M.~M\"{u}ger.
\newblock From subfactors to categories and topology. {II}. {T}he quantum
  double of tensor categories and subfactors.
\newblock {\em J. Pure Appl. Algebra}, 180(1-2):159--219, 2003.

\bibitem{Muger-Structure}
M.~M\"{u}ger.
\newblock On the structure of modular categories.
\newblock {\em Proc. London Math. Soc. (3)}, 87(2):291--308, 2003.

\bibitem{NRWZ20a}
S.-H. Ng, E.~C. Rowell, Y.~Wang, and Q.~Zhang.
\newblock Higher central charges and Witt groups.
\newblock {\em arXiv e-prints}, arXiv:2002.03570v2, 2020.

\bibitem{NS07}
S.-H. Ng and P.~Schauenburg.
\newblock Frobenius-{S}chur indicators and exponents of spherical categories.
\newblock {\em Adv. Math.}, 211(1):34--71, 2007.

\bibitem{NSind}
S.-H. Ng and P.~Schauenburg.
\newblock Higher {F}robenius-{S}chur indicators for pivotal categories.
\newblock In {\em Hopf algebras and generalizations}, volume 441 of {\em
  Contemp. Math.}, pages 63--90. Amer. Math. Soc., Providence, RI, 2007.

\bibitem{NS10}
S.-H. Ng and P.~Schauenburg.
\newblock Congruence subgroups and generalized {F}robenius-{S}chur indicators.
\newblock {\em Comm. Math. Phys.}, 300(1):1--46, 2010.

\bibitem{NSW}
S.-H. Ng, A.~Schopieray, and Y.~Wang.
\newblock Higher {G}auss sums of modular categories.
\newblock {\em Selecta Math. (N.S.)}, 25(4):Paper No. 53, 32, 2019.

\bibitem{Nobs1}
A.~Nobs.
\newblock Die irreduziblen {D}arstellungen der {G}ruppen
  {$\text{SL}_{2}(\mathbb Z_{p})$}, insbesondere {$\text{SL}_{2}(\mathbb
  Z_{2})$}. {I}.
\newblock {\em Comment. Math. Helv.}, 51(4):465--489, 1976.

\bibitem{NW76}
A.~Nobs and J.~Wolfart.
\newblock Die irreduziblen {D}arstellungen der {G}ruppen
  {$\text{SL}_{2}(\mathbb Z_{p})$}, insbesondere
  {$\text{SL}_{2}(\mathbb{Z}_{p})$}. {II}.
\newblock {\em Comment. Math. Helv.}, 51(4):491--526, 1976.

\bibitem{RT}
N.~Reshetikhin and V.~G. Turaev.
\newblock Invariants of {$3$}-manifolds via link polynomials and quantum
  groups.
\newblock {\em Invent. Math.}, 103(3):547--597, 1991.

\bibitem{SRW}
E.~Rowell, R.~Stong, and Z.~Wang.
\newblock On classification of modular tensor categories.
\newblock {\em Comm. Math. Phys.}, 292(2):343--389, 2009.

\bibitem{Row05}
E.~C. Rowell.
\newblock From quantum groups to unitary modular tensor categories.
\newblock In {\em Representations of algebraic groups, quantum groups, and
  {L}ie algebras}, volume 413 of {\em Contemp. Math.}, pages 215--230. Amer.
  Math. Soc., Providence, RI, 2006.

\bibitem{RW18}
E.~C. Rowell and Z.~Wang.
\newblock Mathematics of topological quantum computing.
\newblock {\em Bull. Amer. Math. Soc. (N.S.)}, 55(2):183--238, 2018.

\bibitem{TurWen}
V.~Turaev and H.~Wenzl.
\newblock Quantum invariants of {$3$}-manifolds associated with classical
  simple {L}ie algebras.
\newblock {\em Internat. J. Math.}, 4(2):323--358, 1993.

\bibitem{TuBook}
V.~G. Turaev.
\newblock {\em Quantum invariants of knots and 3-manifolds}, volume~18 of {\em
  De Gruyter Studies in Mathematics}.
\newblock Walter de Gruyter \& Co., Berlin, revised edition, 2010.

\bibitem{Vafa}
C.~Vafa.
\newblock Toward classification of conformal theories.
\newblock {\em Phys. Lett. B}, 206(3):421--426, 1988.

\bibitem{FSexp2}
Z.~Wan and Y.~Wang.
\newblock Classification of {S}pherical {F}usion {C}ategories of
  {F}robenius--{S}chur {E}xponent 2.
\newblock {\em Algebra Colloq.}, 28(1):39--50, 2021.

\bibitem{ZhenghanBook}
Z.~Wang.
\newblock {\em Topological quantum computation}, volume 112 of {\em CBMS
  Regional Conference Series in Mathematics}.
\newblock Published for the Conference Board of the Mathematical Sciences,
  Washington, DC; by the American Mathematical Society, Providence, RI, 2010.

\bibitem{Wen92}
X.-G. Wen.
\newblock {Theory of the edge states in fractional quantum Hall effects}.
\newblock {\em Int. J. Mod. Phys. B}, 6:1711--1762, 1992.

\bibitem{Wiebook}
H.~Wielandt.
\newblock {\em Finite permutation groups}.
\newblock Translated from the German by R. Bercov. Academic Press, New
  York-London, 1964.

\end{thebibliography}

\end{document}